	\newcommand{\Sylvain}[1]%
	{\begin{quotation}{\footnotesize\textcolor{blue}{\textbf{Sylvain:} #1}}\end{quotation}}
	\newcommand{\THOMAS}[1]%
	{\begin{quotation}{\footnotesize\textcolor{brown}{\textbf{Thomas:} #1}}\end{quotation}}
	\newcommand{\Dani}[1]%
	{\begin{quotation}{\footnotesize\textcolor{red}{\textbf{Dani:} #1}}\end{quotation}}
\renewcommand{\epsilon}{\varepsilon}
\theoremstyle{theorem}
\newtheorem{theorem}{Theorem}
\newtheorem{corollary}{Corollary}[section]
\newtheorem{lemma}[corollary]{Lemma}
\newtheorem{proposition}[corollary]{Proposition}
\theoremstyle{remark}
\newtheorem{remark}[corollary]{Remark}
\theoremstyle{definition}
\newtheorem{definition}[corollary]{Definition}
\newtheorem{example}[corollary]{Example}
\pgfplotsset{compat=1.14}
\tikzset{
  partial ellipse/.style args={#1:#2:#3}{
    insert path={+ (#1:#3) arc (#1:#2:#3)}
  }
}
\tikzset{
  partial ellipsecake/.style args={#1:#2:#3:#4}{
    insert path={+ (#1:#3) arc (#1:#2:#3 and #4) -- (0,0)  -- (#3,0)}
  }
}
\tikzset{
  use path for main/.code={%
    \tikz@addmode{%
      \expandafter\pgfsyssoftpath@setcurrentpath\csname tikz@intersect@path@name@#1\endcsname
    }%
  },
  use path for actions/.code={%
    \expandafter\def\expandafter\tikz@preactions\expandafter{\tikz@preactions\expandafter\let\expandafter\tikz@actions@path\csname tikz@intersect@path@name@#1\endcsname}%
  },
  use path/.style={%
    use path for main=#1,
    use path for actions=#1,
  }
}
\DeclareMathOperator{\coker}{Coker}
\DeclareMathOperator*{\colim}{colim}
\DeclareMathOperator{\crit}{crit}
\DeclareMathOperator{\supp}{supp}
\newcommand{\tnabla}{\widetilde{\nabla}}
\newcommand{\T}{\mathcal{T}}
\newcommand{\del}{\partial}
\newcommand{\FF}{\mathcal{F}}
\newcommand{\HH}{\mathcal{H}}
\newcommand{\bC}{\mathbf{C}}
\newcommand{\bZ}{\mathbf{Z}}
\newcommand{\bR}{\mathbf{R}}
\newcommand{\E}{\mathcal{E}}
\newcommand{\NL}{\mathrm{NL}}
\newcommand{\R}{\mathbf{R}}
\newcommand{\B}{\mathcal{B}}
\newcommand{\C}{\mathbf{C}}
\newcommand{\Z}{\mathbf{Z}}
\newcommand{\N}{\mathbf{N}}
\newcommand{\TT}{\widetilde{\mathcal{T}}}
\newcommand{\GG}{\mathcal{G}}
\newcommand{\QQ}{\mathcal{Q}}
\newcommand{\OO}{\mathcal{O}}
\renewcommand{\SS}{\mathcal{S}}
\DeclareMathOperator{\MV}{MV}
\DeclareMathOperator{\sing}{Sing}
\newcommand{\TOP}{\mathrm{Top}}
\DeclareMathOperator{\id}{id}
\author{M. Abouzaid \and D. \'Alvarez-Gavela \and S. Courte \and T. Kragh}
\title{Normal invariant of nearby Lagrangians via twisted derivative} 
\date{\today} \thanks{}
\newcommand{\quadfunc}{2-homogeneous function}
\renewcommand{\quadfunc}{quadratic function}
\begin{document}

\maketitle

 \begin{abstract}
Let $L$ and $M$ be closed, connected, smooth manifolds and let $L \hookrightarrow T^*M$ be an exact Lagrangian embedding. The induced map $L \to M$ is known by earlier work to be a homotopy equivalence. We show that the associated normal invariant $M \to G/O$ factors through a map $B(\T,\QQ) \to G/O$ which is a twisted version of the Waldhausen derivative $\T \to G$ on the space $\T$ of tubes. Further, we show that this twisted derivative map itself factors though a map $B(G/O) \to G/O$ which is a twisted version of the $S$-duality map $BG \to G$. In particular we deduce that the normal invariant of the homotopy equivalence $L \to M$ is 2-torsion. \end{abstract}

\tableofcontents

\section{Introduction}\label{sec:intro}

\subsection{Main results}

Let $L$ and $M$ be closed connected smooth manifolds and let $L \hookrightarrow T^*M$ be an exact Lagrangian embedding.
We call $L$ a \emph{nearby Lagrangian} for short. The well-known \emph{nearby Lagrangian conjecture}
predicts that $L$ is Hamiltonian isotopic to the zero-section, and hence that the projection map
$\pi\colon L \to M$ is homotopic to a diffeomorphism.

We know from previous work of the first and last author
that $\pi$ is a simple homotopy equivalence. Hence it represents an element of the
\emph{simple structure set} $\SS^s(M)$ considered in surgery theory, namely
the set of such pairs $(L,\pi)$ modulo s-cobordism.  In view of the s-cobordism theorem, when $\dim M \geq 5$ we have that $\pi \colon L \to M$ is homotopic to a diffeomorphism if and only if
the class of $(L,\pi)$ in $\SS^s(M)$ agrees with the class of $(M,\id)$. 

\begin{definition}
We denote by $\SS^s_\NL(M)$ the subset of $\SS^s(M)$ consisting of the classes in $\SS^s(M)$ that admit representatives $(L,\pi)$
where $L$ is a closed manifold and $\pi=\pi_M\circ \psi$ is the composition of the projection map $\pi_M\colon T^*M \to M$ with an exact Lagrangian embedding $\psi\colon L \to T^*M$.
\end{definition}

If the nearby Lagrangian conjecture holds, then it follows in particular that $\SS^s_\NL(M)$ is a singleton consisting of the base point $(M,\id)$. As a step towards the conjecture one may therefore try to show that certain elements of $\SS^s(M)$ cannot arise from nearby Lagrangians.

We observe at this point that with respect to the hard/soft dichotomy in symplectic topology (see Section \ref{sec: hard/soft}), there are already soft obstructions for an element of $\SS^s(M)$ to lie in $\SS^s_\NL(M)$. The most obvious soft obstruction is that $TL \otimes \bC$ and $\pi^* TM \otimes \bC$ should be isomorphic as complex vector bundles, see Section \ref{sec:appli} for further discussion. The focus of the present article is on establishing hard obstructions.

The first hard obstructions on $\SS^s_\NL(M)$ where obtained by the first author \cite{A08} in the case where $M=S^{4k+1}$
and then extended to the case where $M$ is any odd-dimensional sphere by Ekholm-Kragh-Smith \cite{ES16, EKS16}. The combination of these results gives the following: for $k \geq 1$, if $(L,\pi) \in \SS^s_\NL(S^{4k+1})$, then $L$ must bound a parallelizable manifold, and more generally for $k\geq 2$ if $M$ is a $(2k+1)$-dimensional homotopy sphere and $(L,\pi) \in \SS^s_\NL(M)$, then $L = \pm M$ in $\Theta_n/bP_{n+1}$. Here $\Theta_n$ is the group of oriented homotopy spheres up to $h$-cobordism (same as orientation-preserving diffeomorphism in dimension $\geq 5$) with the operation of connected sum and $bP_{n+1} \subset \Theta_n$ is the subgroup consisting of those spheres that bound a parallelizable manifold.


The condition of bounding a parallelizable manifold can be generalized to all manifolds $M$ as follows. A first step in surgery theory is to assign to each element of the simple structure set a map $M\to G/O$ (defined up to homotopy), called
the \emph{normal invariant}, where $G/O$ is the homotopy fiber of the de-looped $J$-homomorphism $BO \to BG$, which associates to a vector bundle the corresponding spherical fibration ($G$ stands for the monoid of stable self-homotopy equivalences of spheres). This gives rise to a (pointed) map
\begin{equation}\label{eq:normal}
\SS^s(M)\to [M,G/O].
\end{equation}
An element of $[M,G/O]$ can be thought of as a vector bundle over $M$ together with a trivialization
of its underlying spherical fibration, up to the appropriate equivalence relation (see Section~\ref{sec:norm-invar-models}). With respect to the operation of direct sum, the map $BO \to BG$ is a map of monoids and indeed of infinite loop spaces, so the homotopy fiber $G/O$ inherits an infinite loop space structure and in particular the set of normal invariants $[M,G/O]$ is an abelian group. When $M=S^n$ the normal invariant of $L\in \SS^s(M)$ is trivial if and only if $L$ bounds a parallelizable manifold, and the group structure on the set of normal invariants coincides with that of $\Theta_n/bP_{n+1}$.

In this article we give an obstruction for realizing a class in $[M,G/O]$ as the normal invariant of a nearby Lagrangian, which yields new constraints on $\SS_{\text{NL}}^s(M)$. The key ingredient is Waldhausen's space of tubes \cite{W82}, which heuristically is the space of codimension zero solid tori in Euclidean space, stabilised with respect to two operations which both raise the dimension of the ambient space, one of which keeps the dimension of the core sphere constant, while the other raises it by one.

We will consider a specific model $\T$ for this space, consisting of quadratic functions of tube type as defined in Section \ref{subsec:twistedder}. Our model has a natural monoid structure and contains the submonoid $\QQ$ of nondegenerate quadratic forms, which serves as our model for $BO$. The space $\T$ classifies generating functions of tube type and the (geometric realisation of the) bar construction $B(\T,\QQ)$ classifies twisted generating functions of tube type, as introduced in \cite{ACGK}. We use $\T^\TOP$, the space of topological tubes, as our model for $BG$, and use $B(\T^\TOP,\QQ)$ as our model for $B(G/O)$. We briefly introduce the key maps in the story.

\begin{enumerate}
\item The {\em classifying map} $M \to B(\T,\QQ)$ associated to a nearby Lagrangian $L \to T^*M$ via the existence theorem for twisted generating functions of tube type for nearby Lagrangians as constructed in \cite{ACGK}.
\item The {\em twisted forgetful map} $B(\T,\QQ) \to B(G/O)$, which is a twisted version of the forgetful map $\T \to BG$, or rather $\T \to \T^\TOP$, for our model $B(\T^\TOP, \QQ)$ of $B(G/O)$.
\item The {\em twisted duality map} $B(G/O) \to G/O$, which is a twisted version of the $S$-duality map $BG \to G$ considered by B\"okstedt and Waldhausen in \cite{BW88}  which we realize as a monoid map $\T^\TOP \to G$ and its twisted version $B(\T^\TOP, \QQ) \to G/O$.
\item The  {\em twisted derivative} $B(\T,\QQ) \to G/O$, which is a twisted version of the  Waldhausen derivative $\T \to G$, which factors as the composition of the maps $\T \to BG$ and $BG \to G$ in items (2) and (3) above \cite{W82,BW88}. 
\end{enumerate}

Our main results are then stated as follows, where equality of maps is understood to mean up to homotopy.

\begin{theorem}\label{thm:main derivative}
Let $M$ be a closed connected manifold and $L$ a closed exact Lagrangian submanifold of $T^*M$.
The normal invariant $M\to G/O$ of the projection $\pi\colon L\to M$ factors as a composition
\[M \to B(\T,\QQ) \to G/O\]
of the classifying map and the twisted derivative.
\end{theorem}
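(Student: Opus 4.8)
The plan is to assemble Theorem~\ref{thm:main derivative} from the existence theorem for twisted generating functions and from the known identification of the Waldhausen derivative with the surgery-theoretic normal invariant. First I would invoke the main construction of \cite{ACGK}: given a nearby Lagrangian $\psi\colon L\hookrightarrow T^*M$, one produces a twisted generating function of tube type, which by the classifying property of $B(\T,\QQ)$ is exactly a map $f_\psi\colon M\to B(\T,\QQ)$, well-defined up to homotopy (the homotopy being the concordance class of the twisted generating function, which is canonical since any two twisted generating functions for the same $L$ are stably equivalent after the usual fiberwise stabilization and Sylvester's law of inertia moves on the quadratic-form part). This gives the first arrow. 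The second arrow $B(\T,\QQ)\to G/O$ is the twisted derivative of item (4), defined by composing the twisted forgetful map $B(\T,\QQ)\to B(\T^\TOP,\QQ)=B(G/O)$ with the twisted duality map $B(\T^\TOP,\QQ)\to G/O$; all I need here is that these maps exist and are natural, which is taken as given in the statement of the key maps above.

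The heart of the argument is then to identify the composite $M\to B(\T,\QQ)\to G/O$ with the normal invariant of $\pi\colon L\to M$ coming from \eqref{eq:normal}. The strategy is to go back to the geometric definition of the normal invariant in surgery theory. Because $\pi$ is a simple homotopy equivalence, its normal invariant is represented by the Spivak-normal-fibration data: one pulls back the stable tangent bundle of $M$ along $\pi$, compares it with $TL$, and records the resulting vector bundle $TL\ominus\pi^*TM$ together with the canonical stable trivialization of its underlying spherical fibration that comes from the homotopy equivalence $\pi$. I would show that the twisted generating function of tube type encodes precisely this data: the quadratic-form (i.e.\ $\QQ$) part of the twisted generating function records, fiberwise over $M$, the stabilized difference bundle $\pi^*TM\ominus TL$ (this is the standard relationship between generating functions and stable normal/tangent data, as in Waldhausen's tube picture where the core of the tube carries $L$ and the ambient Euclidean directions carry the stabilizing fibers), while the twisting by $\QQ$ along the $\T$-direction produces exactly the nullhomotopy of the associated spherical fibration. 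Thus, fiberwise, applying the forgetful map $\T\to\T^\TOP$ and then the $S$-duality map $\T^\TOP\to G$ of B\"okstedt–Waldhausen \cite{BW88} reproduces the $S$-dual of the Spivak trivialization, which is the standard description of the normal invariant's clutching data. Passing to the twisted (bar-construction) level globalizes this fiberwise identification over all of $M$.

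Concretely, the key steps in order are: (i) unwind the definition of the twisted generating function of tube type from \cite{ACGK} to extract its underlying stable bundle and its spherical nullhomotopy; (ii) match this with the geometric normal invariant by checking that the vector bundle datum agrees with $TL\ominus\pi^*TM$ and the trivialization agrees with the one induced by $\pi$, which amounts to a compatibility check between the generating-function stabilization and the surgery-theoretic Pontryagin–Thom construction; (iii) verify that the twisted forgetful and twisted duality maps, on underlying bundle-plus-trivialization data, act as the identity on the vector bundle and as $S$-duality on the trivialization, so that the composite indeed lands on the normal invariant in the model of Section~\ref{sec:norm-invar-models}; (iv) check that all identifications are natural enough to be performed at the level of the classifying spaces $B(\T,\QQ)$ and $G/O$, not just pointwise, so the resulting square of maps commutes up to homotopy.

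The main obstacle I expect is step (ii): the precise bookkeeping that matches the two notions of "difference bundle with a trivialization of its sphere bundle" — the one arising from the generating-function/tube formalism and the one arising from surgery theory's normal invariant. Both the generating-function side and the surgery side involve several stabilizations and Pontryagin–Thom-type constructions, and getting the signs, the dimension shifts of the tube construction (one stabilization keeping the core-sphere dimension fixed, one raising it), and the direction of $S$-duality to line up consistently is where the real work lies. A secondary subtlety is ensuring the classifying map $M\to B(\T,\QQ)$ is genuinely well-defined up to homotopy independently of the choices made in \cite{ACGK} (choice of primitive, of fiberwise stabilization, of the $\QQ$-twisting), since the factorization statement is only meaningful for a canonical homotopy class; this should follow from a uniqueness-up-to-concordance statement for twisted generating functions of tube type, which I would either cite from \cite{ACGK} or prove by a standard Moser-type interpolation argument combined with the contractibility of the relevant stabilization spaces.
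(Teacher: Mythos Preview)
Your proposal conflates the twisted derivative with its factorization through $B(G/O)$. In the paper, the twisted derivative is \emph{defined} directly (Definition~\ref{def: twisted derivative}) as the map $B(\T,\QQ)\to B(\GG,\OO)$ induced by the literal gradient $\nabla\colon \T\to\GG$, $F\mapsto\nabla F$; the fact that this map subsequently factors through $B(\T^\TOP,\QQ)\simeq B(G/O)$ is the content of the separate Theorem~\ref{thm:main duality}, proved only later via the topological-tube machinery. By taking the composite through $B(G/O)$ as your definition, you are importing all of the $S$-duality and adapted-deformation-retraction apparatus of Section~\ref{sec:factorization} into a statement whose proof does not require it.

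The paper's argument is far more direct and sidesteps the bookkeeping you anticipate in step~(ii). The point is that the normal invariant is defined (Definition~\ref{def:normalinvt}) as a vector bundle $E\to M$ together with a fibered map $\theta\colon J(E)\to M\times S^n$ transverse to $M\times\{0\}$ with $\theta^{-1}(M\times\{0\})\cong L$. Given an almost quadratic twisted generating function $(F_i,q_{ij})$, the fiberwise gradients $(\nabla F_i,\nabla q_{ij})$ form an $\OO$-twisted map to $\GG$ whose zero-set is, tautologically, the fiberwise critical locus $\Sigma_{F_i}$, i.e.\ $L$ itself. Proposition~\ref{prop:recognizeNI} then says this data \emph{is} the normal invariant, with no Spivak comparison, no $S$-duality, and no difference-bundle analysis needed. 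So your anticipated ``main obstacle'' simply does not arise.

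You also omit a genuine technical step: \cite{ACGK} produces twisted generating functions that are \emph{linear at infinity}, not quadratic. The paper spends Sections~3.4--3.5 on an explicit (and somewhat delicate) conversion to almost quadratic twisted generating functions, culminating in Corollary~\ref{cor:existence-quad-gf}. This is needed precisely because direct sum with a quadratic form preserves the almost-quadratic condition but not the linear-at-infinity condition, so only the quadratic model gives a genuine monoid and hence a clean bar construction $B(\T,\QQ)$.
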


\begin{theorem}\label{thm:main duality}
The twisted derivative factors as a composition
\[ B(\T,\QQ) \to B(G/O) \to  G/O\]
of the twisted forgetful map and the twisted duality map. 
\end{theorem}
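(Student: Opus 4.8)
The plan is to reduce the statement to the classical factorization of the Waldhausen derivative together with the functoriality of the twisting construction. Recall from \cite{W82,BW88} that, up to homotopy, the derivative $\T\to G$ is the composition of the forgetful map $\T\to\T^\TOP$ with the $S$-duality map $\T^\TOP\to G$, and that in our models all three of these are maps of monoids. The forgetful map realizes $\T$ as a submonoid of $\T^\TOP$ and is the identity on the common submonoid $\QQ$; restricting the $S$-duality monoid map along $\QQ\hookrightarrow\T^\TOP$ therefore produces a monoid map $j\co\QQ\to G$ which, by the factorization, agrees with the restriction of the derivative to $\QQ$. Hence the whole factorization takes place in the category of topological monoids under $\QQ$.

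First I would record that the twisting construction $B(-,\QQ)$ --- the bar construction sending a monoid $\A$ under $\QQ$ to the geometric realization of the simplicial space $[k]\mapsto\A\times\QQ^{\times k}$ with the usual bar faces and degeneracies --- is functorial on this category, and carries a homotopy through monoid maps under $\QQ$ to a homotopy of realizations. Applying it to the factorization above yields
\[ B(\T,\QQ) \to B(\T^\TOP,\QQ) \to B(G,\QQ) \]
whose composite is $B(-,\QQ)$ applied to the derivative, i.e.\ the twisted derivative. It then remains to match the pieces: $B(\T^\TOP,\QQ)$ is our model for $B(G/O)$ and the first arrow is the twisted forgetful map, both by construction; and I would identify $B(G,\QQ)$ with $G/O$ and the second arrow with the twisted duality map, using the description of $G/O$ as the $\QQ$-twist of $G$ along $j$ that underlies the construction of the twisted duality map $B(G/O)\to G/O$ out of the monoid map $\T^\TOP\to G$.

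The main obstacle is that the B\"okstedt--Waldhausen factorization of the derivative is only an equality up to homotopy, so to transport it through $B(-,\QQ)$ one must realize the forgetful, $S$-duality and derivative maps as honest monoid maps with the stated restrictions to $\QQ$ and produce the comparison homotopy as a homotopy of monoid maps under $\QQ$. Concretely this comes down to checking that the $S$-duality construction of \cite{BW88} can be performed in families parametrized by the twisting $\QQ$-bundle and is compatible with the generating-function models of Section~\ref{subsec:twistedder}, so that the two composites agree at each simplicial level, where at level $k$ --- with the factor $\QQ^{\times k}$ carried along passively --- the needed identity is exactly the classical factorization of the untwisted derivative. Once this naturality is established, passing to geometric realizations produces the homotopy asserted in the statement.
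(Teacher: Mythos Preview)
Your overall architecture matches the paper's: apply the bar construction $B(-,\QQ)$ to a monoid-level factorization of the derivative through $\T^\TOP$, then identify the pieces. But the paragraph you label ``the main obstacle'' is not an obstacle to be checked---it is the entire content of the proof, and your proposal does not actually carry it out. The B\"okstedt--Waldhausen result \cite{BW88} is a statement about homotopy classes of maps of spaces; nothing in that paper hands you an $S$-duality map $\T^\TOP\to\GG$ that is simultaneously (i) a strict monoid map, (ii) restricts on $\QQ$ to the gradient map $\nabla$ landing in $\OO$, and (iii) agrees with $\nabla$ on $\T$ through a homotopy of monoid maps under $\QQ$. All three are needed for your functoriality argument, and none of them is automatic.

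The paper resolves this by building new models. It replaces $\T^\TOP$ by a homotopy-equivalent monoid $\TT^\TOP$ whose points carry an \emph{adapted pair of deformation retractions} $(d^-,d^+)$, and defines the $S$-duality map explicitly as $\tnabla(f,d^\pm)(v)=d_1^+(v)-d_1^-(v)$; this is visibly a monoid map and on $\QQ$ (with the canonical $d^\pm$) it returns exactly the orthogonal reflection $\nabla q\in\OO$. To compare with the smooth derivative it introduces a further model $\TT$ carrying in addition a vector field $X$ with $d_vf(X(v))>0$, and writes down a two-stage homotopy (Proposition~\ref{prop:derivativesagree}) through monoid maps between $\tnabla$ and $\nabla$. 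Only after this does the bar-construction step become legitimate. Your appeal to ``checking that the $S$-duality construction of \cite{BW88} can be performed in families'' is too vague to substitute for these constructions; in particular you never say what map plays the role of $\tnabla$ or how the homotopy is produced.

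A smaller point: your target $B(G,\QQ)$ is not quite right. The paper's $\tnabla$ lands in $\GG=\coprod_n G_n$ and carries $\QQ$ into $\OO$, so the twisted map is $B(\TT^\TOP,\QQ)\to B(\GG,\OO)$; one then passes to $B(G,O)=G/O$ via the comparison map $c$ of Equation~(\ref{eq:twisted-derivative:1}) and a colimit (Lemma~\ref{lem:colimG/O}). Writing $B(G,\QQ)$ obscures that the $\QQ$-action factors through $\OO$ and that an additional identification step is required.
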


\begin{theorem}\label{thm:main torsion}
The twisted duality map $B(G/O) \to G/O$ is 2-torsion.
\end{theorem}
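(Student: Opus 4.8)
The plan is to reduce the statement to a $2$-torsion property of the untwisted $S$-duality map $BG\to G$, and then to establish that property by identifying this map, on the level of spectra, with multiplication by the stable Hopf element.

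First I would make precise the sense in which the twisted duality map is built from the monoid map $\mu\colon\T^{\TOP}\to G$ realizing $BG\to G$. The map $\mu$ carries the submonoid $\QQ$ into $O\subset G$, hence is a map of pairs $(\T^{\TOP},\QQ)\to(G,O)$, and the twisted duality map is obtained from it by the bar / homotopy-quotient construction, under the identifications $\T^{\TOP}/\!\!/\QQ\simeq B(\T^{\TOP},\QQ)$ and $G/\!\!/O\simeq G/O$. This construction is functorial and additive for the infinite loop structures at issue --- those used to make sense of ``$2$-torsion'' in the statement --- so twice the twisted duality map is the map induced by $2\mu$, viewed as a map of pairs. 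It therefore suffices to exhibit a null-homotopy of $2\mu\colon\T^{\TOP}\to G$ restricting to a null-homotopy of $2(\mu|_\QQ)\colon\QQ\to O$; equivalently, to show that the untwisted $S$-duality maps $BG\to G$ and $BO\to O$ are $2$-torsion, compatibly along the $J$-homomorphism.

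The key input is then the analysis of \cite{BW88} (reproved in the form needed if necessary): through the canonical equivalence identifying the spectrum underlying $BG$ with the suspension of the one underlying $G$, the $S$-duality map $BG\to G$ is represented by multiplication by the stable Hopf element $\eta\in\pi_1(\mathbb{S})$; since $2\eta=0$, this multiplication comes with a canonical null-homotopy of twice itself, so $BG\to G$ is $2$-torsion. The same holds for $BO\to O$, with $\eta$ now acting on the spectrum underlying $O$; and since multiplication by $\eta$ is natural, the $J$-homomorphism intertwines the two, so these canonical null-homotopies are compatible and, fed into the reduction above, yield a null-homotopy of twice the twisted duality map. I would stress that this is not a homotopy-group count: the twisted duality map vanishes on every homotopy group --- $\pi_n B(G/O)\cong L_{n-1}(\Z)$ maps to $\pi_n(G/O)\cong L_n(\Z)$, and of $L_{n-1}(\Z)$ and $L_n(\Z)$ one always vanishes --- so its $2$-torsion genuinely requires the spectrum-level identification with $\eta$ (and coexists with the map being non-null).

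The main obstacle is this last step: promoting the comparison of \cite{BW88}, naturally phrased for maps of infinite loop spaces, to a comparison of maps of spectra that is natural enough to be compatible with the $J$-homomorphism and preserved by the homotopy-quotient construction, and placing the real $S$-duality map $BO\to O$ on the same footing. By contrast the reduction step is essentially formal, provided one keeps careful track of which H-space / infinite loop structures are in play --- in particular that the ``$2$'' refers to the additive, Whitney-sum structure on $G/O$ rather than the composition product on $G$.
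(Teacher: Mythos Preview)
Your route is genuinely different from the paper's, and it rests on a step the paper explicitly declines to supply. The paper's proof is self-contained and geometric: it introduces a doubling monoid map $\delta$ on $\T^\TOP$ (and on $\GG$), $\delta f=(f\oplus(-f))\circ A_n^{-1}$, checks via Lemmas~\ref{lem:minusone} and~\ref{lem:twice} that after $c\colon B(\GG,\OO)\to B(G,O)$ this $\delta$ realizes multiplication by $2$, and then shows directly (Lemma~\ref{lem:deltanull}) that $\delta$ is null on $B(\TT^\TOP_\infty,\QQ)$. The null-homotopy is explicit: the adapted retractions give $u(v)=A_n(d_1^+(v),d_1^-(v))\in\{\delta f<0\}$, which lifts $\delta$ through the contractible space $\E_{\delta\infty}$, compatibly with $\QQ\to\FF$. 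No identification with $\eta$ is invoked anywhere.

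Your argument, by contrast, hinges on identifying the (twisted) duality map with an $\eta$-map and then using $2\eta=0$. But the paper says in the introduction that it does \emph{not} prove the twisted duality map $B(G/O)\to G/O$ is an $\eta$-map, and that doing so ``would need to keep track of additional structure beyond the monoid structures we consider in this article.'' So the step you yourself flag as ``the main obstacle'' is a genuine gap relative to what is available: you would need the B\"okstedt--Waldhausen comparison to hold for the concrete model $B(\T^\TOP_\infty,\QQ)\to B(G,O)$, compatibly with the $\QQ$-action, not merely for some abstract $\eta$-map between infinite loop spaces. Your reduction step is also less formal than you suggest: $B(-,\QQ)$ is not a priori additive in the map slot, and making ``$2\mu$'' a monoid map at all already requires a shuffle device like the paper's $\delta$; once you do that carefully you have essentially reproduced Lemmas~\ref{lem:minusone}--\ref{lem:twice}, and the paper's geometric lift then finishes the job far more cheaply than the structured $\eta$-comparison you propose.

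A minor correction: your parenthetical $\pi_n(G/O)\cong L_n(\Z)$ is false --- that holds (in the relevant range) for $G/TOP$, not for $G/O$, whose homotopy groups involve $\Theta_n/bP_{n+1}$ and kin --- so the parity-of-$L$-groups argument does not show the twisted duality map is zero on homotopy.
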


As an immediate application we obtain:

\begin{corollary}\label{cor:main torsion}
Let $M$ be a closed manifold and $L$ a closed exact Lagrangian submanifold of $T^*M$.
The normal invariant $M\to G/O$ of the projection $\pi\colon L\to M$ is a 2-torsion element in the group $[M,G/O]$.
\end{corollary}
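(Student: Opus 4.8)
The plan is to obtain the corollary by simply concatenating the factorizations supplied by the three main theorems and then invoking the group structure on normal invariants. First I would apply Theorem~\ref{thm:main derivative} to write the normal invariant $\nu \in [M,G/O]$ of the projection $\pi \colon L \to M$ as $\nu = d \circ c$, where $c \colon M \to B(\T,\QQ)$ is the classifying map and $d \colon B(\T,\QQ) \to G/O$ is the twisted derivative. Next, Theorem~\ref{thm:main duality} factors the twisted derivative as $d = \delta \circ \varphi$, where $\varphi \colon B(\T,\QQ) \to B(G/O)$ is the twisted forgetful map and $\delta \colon B(G/O) \to G/O$ is the twisted duality map. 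Setting $g := \varphi \circ c \colon M \to B(G/O)$, we thus have $\nu = \delta \circ g$.

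It then remains to see that $2\nu = 0$. Recall from Section~\ref{sec:intro} that $G/O$ is an infinite loop space, so $[X,G/O]$ is an abelian group for every space $X$, with addition defined pointwise via the H-space multiplication on $G/O$. In particular, for a fixed map $h \colon X \to Y$, precomposition $(-)\circ h \colon [Y,G/O] \to [X,G/O]$ is a homomorphism of abelian groups. Applying this with $h = g$ and using Theorem~\ref{thm:main torsion}, which asserts that $2[\delta] = 0$ in $[B(G/O),G/O]$, we conclude
\[
2\nu = 2(\delta \circ g) = (2\delta)\circ g = 0 \in [M,G/O],
\]
so $\nu$ is $2$-torsion, which is the assertion of Corollary~\ref{cor:main torsion}.

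The only point that needs to be noted is that the group structure on the set of normal invariants appearing in~\eqref{eq:normal} is precisely the one induced by the infinite loop space structure on $G/O$, which is the convention already fixed in the introduction; no further argument is required. Consequently there is no genuine obstacle in this deduction: the entire mathematical content lies in Theorems~\ref{thm:main derivative}, \ref{thm:main duality} and~\ref{thm:main torsion}, and the corollary follows from them by a purely formal manipulation.
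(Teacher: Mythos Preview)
Your argument is correct and follows essentially the same route as the paper: factor the normal invariant via Theorems~\ref{thm:main derivative} and~\ref{thm:main duality} as $M \to B(\T,\QQ) \to B(G/O) \to G/O$, then invoke Theorem~\ref{thm:main torsion}. Your added remark that precomposition by $g$ is a group homomorphism on $[-,G/O]$ makes explicit the one formal step the paper leaves implicit, but otherwise the proofs coincide.
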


\begin{proof} From Theorems \ref{thm:main derivative} and \ref{thm:main duality} we obtain a factoring of the normal invariant of $L \to M$ as the composition 
$$ M \to B(\T,\QQ) \to B(G/O) \to G/O $$ 
The fact that it is 2-torsion then follows from Theorem \ref{thm:main torsion}.
\end{proof}

\begin{example} When $L$ and $M$ are homotopy spheres it follows that $L \# L =  M \# M$ in $\theta_n/bP_{n+1}$. If $n$ is even then $bP_{n+1}=0$ and so we conclude that $L \# L$ and $M \# M$ are in fact diffeomorphic. In particular, if $n$ is even and $\Sigma \subset T^*S^n$ is a Lagrangian homotopy sphere, then it follows that $\Sigma \# \Sigma$ is diffeomorphic to $S^n$.
\end{example}

\begin{example} The factorization through $B(G/O)$ gives additional constraints on the normal invariant beyond the fact that it is $2$-torsion.  For example, $\theta_8 \simeq \theta_8 / {bP}_9 \simeq \bZ/2$ so the fact that the normal invariant of a Lagrangian homotopy sphere $\Sigma \subset T^*S^8$ is 2-torsion gives no new information. However, since  $\pi_7G/O=0$ it follows from our results that any exact Lagrangian homotopy sphere in $T^*S^8$ is diffeomorphic to $S^8$, i.e. the unique exotic possibility is excluded. \end{example}

In fact, for homotopy spheres and more generally for any product of homotopy spheres, we may conclude a stronger result, whose statement will require us to recall some background about the $\eta$ map, which is the map between iterated loop spaces that is induced by the Hopf map.

If $X$ is a 3-fold loopspace $X=\Omega^3Y$ we may define a map $\Omega^2Y \to \Omega^3Y$ by pre-composition with the Hopf fibration $\eta:S^3 \to S^2$. This can also be thought of as a map $BX \to X$ and is referred to as an $\eta$-map. For example $G$ is an infinite loopspace with the operation of direct sum (or rather, smash product of maps), and we have an associated $\eta$-map $BG \to G$.

Waldhausen explained in \cite{W82} that the derivative $\T \to G$ factors as the composition of the forgetful map $\T \to BG$ and a geometrically defined map $BG \to G$. In \cite{BW88} B\"okstedt and Waldhausen identified this map $BG \to G$ with an $S$-duality map $BG \to G$, and then further identified the $S$-duality map with the $\eta$-map $BG \to G$. Hence the derivative $\T \to G$ factors through the $\eta$-map $BG \to G$.

Now, $G/O$ inherits an infinite loop space structure as the homotopy fiber of the $J$-homomorphism $BO \to BG$, which is an infinite loop map under the operation of direct sum, and so there also is an $\eta$-map $B(G/O) \to G/O$. We do not prove in the present article that the twisted duality map $B(G/O) \to G/O$ which appears in Theorems \ref{thm:main duality} and \ref{thm:main torsion} is an $\eta$-map, though this is our expectation. To identify the twisted duality map as an $\eta$-map one would need to keep track of additional structure beyond the monoid structures we consider in this article.



For nearby Lagrangians with stably trivial Gauss map the situation is simpler. As explained in \cite{ACGK}, any nearby Lagrangian $L \subset T^*M$ with stably trivial Gauss map admits a genuine (i.e. untwisted) generating function of tube type, which is classified by a map $M \to \T$. The composition $M \to U/O \to BO$ is trivial, so the normal invariant lifts to a map $M \to G$, and it follows from our discussion below that this map factors as the composition of the classifying map $M \to \T$ and the derivative $\T \to G$. Therefore, from the B\"okstedt-Waldhausen factoring of the derivative as the composition $\T \to BG \to G$ with $BG \to G$ the $\eta$-map, we may directly conclude the following result.

\begin{theorem}\label{cor:main eta1}
Let $M$ be a closed connected manifold and $L$ a closed exact Lagrangian submanifold of $T^*M$. Assume that the stable Gauss map $L \to U/O$ is trivial. Then the normal invariant $M \to G/O$ of the projection $\pi: L \to M$ factors through the $\eta$-map $B(G/O) \to G/O$.
\end{theorem}

It was shown in \cite{ACGK} that the stable Gauss map of a nearby Lagrangian $L \to U/O$ is trivial on homotopy groups, hence nullhomotopic when $M$ (and therefore $L$) has the homotopy type of a sphere. So Theorem \ref{cor:main eta1} applies at least in the case of homotopy spheres. The conclusion may be phrased in more classical terms as follows. We note that this statement was also obtained by Smith and Porcelli in \cite{PS24}  by combining the results of \cite{ACGK} with Floer homotopy theory.

\begin{corollary}\label{cor:main eta2}
Let $M$ be a homotopy sphere and $L$ a closed exact Lagrangian submanifold of $T^*M$. The difference $M-L$ in $\theta_n/bP_{n+1} \subset \text{coker}(J_n)$ is in the image of the composition $$\pi^s_{n-1} \xrightarrow{\eta} \pi^s_n \to \text{coker}(J_n)$$ where the first map is multiplication by the Hopf class $\eta \in \pi^s_1$ and the second map is the quotient by the image of the $J$-homomorphism $J_n:\pi_nO \to \pi_n^s$. 
\end{corollary}



By using known results on the multiplicative structure of the stable homotopy groups of spheres one can use the above consequence to find even stronger restrictions on the possible smooth structures of nearby Lagrangian homotopy spheres.

\begin{remark} In Section \ref{sec:appli} we will also explain how one may obtain concrete applications of our results beyond the case of homotopy spheres,  illustrating the general principle with the specific examples of products of spheres, tori, and complex projective spaces.  \end{remark}

We also mention that in \cite{ACGK} it is proved that the map $M\to B(\T,\QQ)$ also factors
the Gauss map $M\to U/O\simeq B(\Z,\QQ)$. This result is used there in combination with Bökstedt's result that the rigid tube map $BO \to \T$ is a rational
homotopy equivalence (see \cite{B82}) to obtain constraints on the Gauss map.
Note that the factoring $M \to B(\T,\QQ) \to U/O$ already gives information on the composition $M\to G/O \to BO$ and therefore on
the normal invariant $M\to G/O$ itself. Indeed, we have the following homotopy commutative
diagram 
\begin{equation}
\begin{tikzcd}
{B(\T,\QQ)}\arrow[r] \arrow[d] & U/O \arrow[d] \\
G/O \arrow[r] & BO.
\end{tikzcd}
\end{equation}
For instance the map $U/O\to BO$ is $2$-torsion since it is an $\eta$-map with respect to the Bott periodicity equivalence $\Omega (U/O) \simeq \bZ \times BO$, and thus
the composition $M\to G/O\to BO$ is also $2$-torsion. Corollary \ref{cor:main torsion} states that in fact the map $M \to G/O$ was already 2-torsion even before taking the composition with the map $G/O \to BO$.

\begin{remark}\label{rem:next}
The next step in surgery theory (the definition of surgery obstructions in odd and even dimensions)
extends the normal invariant map to an exact sequence of pointed sets involving the simple L-groups associated to $\pi_1 M$:
\[L^s_{n+1}(\Z[\pi_1 M])\to \SS^s(M)\to [M,G/O] \to L^s_n(\Z[\pi_1 M]).\]
This means for instance that if we know that $\pi \colon L \to M$ has trivial normal invariant, then
$\pi$ is bordant to the identity $M\to M$ and the bordism can be surgered to an s-cobordism precisely
if an obstruction in $L^s_{n+1}(\Z[\pi_1 M])$ vanishes.
We do not know how symplectic rigidity may constrain these surgery obstructions. 
\end{remark}

\subsection{Acknowledgments}
The authors thank Noah Porcelli and Ivan Smith for sharing their work at an early stage, Stéphane Guillermou for many stimulating discussions
and John Rognes for comments on a first draft. The second author is also grateful to S\o ren Galatius and Sander Kupers for helpful conversations. 

Part of this work was supported by the Swedish Research Council under grant no. 2021-06594 while the authors were in residence at Institut Mittag-Leffler in Djursholm, Sweden during the spring of 2022.

The second author was partially supported by NSF grant DMS-2203455 and the Simons Foundation.

The third author was partially supported by ANR project COSY (ANR-21-CE40-0002).

The fourth author was partially supported by the VR project ``2018-04237'' titled ``Lagrangian submanifolds, Algebraic K-theory and Quantum Physics'' and the VR project ``2022-04388'' titled ``Floer Homotopy Theory and Twisted Algebraic K-Theory''.

\section{The normal invariant and the space $G/O$} \label{sec:norm-invar-models}

\subsection{Notation and terminology} \label{sec:notation-terminology}
We begin by setting some notation and terminology that will be used in this paper. Given a compact manifold $M$ and a finite totally ordered cover $(M_i)_{i\in I}$, we consider, as Segal did in \cite{S},
the Mayer-Vietoris simplicial space $\MV_\bullet((M_i)_{i\in I})$ whose $k$-simplices
are the spaces $\coprod_{i_0\leq\dots \leq i_k} M_{i_1}\cap \dots \cap M_{i_k}$ and whose structure maps
are given by the various inclusions. The natural projection
$|\MV_\bullet((M_i)_{i\in I})| \to M$ is a homotopy equivalence. We shall use this notion to formulate twisting data:
\begin{definition}[c.f. Appendix A of \cite{ACGK}]
Given a monoid $A$ and a right module $X$ an \emph{$A$-twisted map from $M$ to $X$} is a simplicial map
\[\MV_\bullet((M_i)_{i\in I})\to B_\bullet(X,A)\]
for some cover $((M_i)_{i\in I})$. Two such $A$-twisted maps are equivalent if there is an $A$-twisted map to $X$ on $M\times [0,1]$ which, up to refining the covers, respectively restricts to the given ones on $M\times \{0\}$
and $M\times \{1\}$. We have the following classification result: 
\end{definition}
In the above definition, $  B_\bullet(X,A)$ refers to the simplicial space associated with the bar construction,  whose space of $k$-simplices is $X\times A^k$. We denote $B(X,A)$ its geometric realization and we shall assume throughout that $X$ and $A$ have the homotopy types of CW complexes, and that $A$ is well-pointed:
\begin{proposition}[Proposition A.16 of \cite{ACGK}] \label{prop:homotopy_class_twsited}
There is a natural bijection between
\begin{itemize}
\item Equivalence classes of $A$-twisted maps from $M$ to $X$,
\item Homotopy classes of maps $M\to B(X,A)$.
\end{itemize} \qed
\end{proposition}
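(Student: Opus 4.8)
The plan is to exhibit mutually inverse maps $\Phi$ and $\Psi$ between the two sets, and to check that they are well defined and inverse to one another.

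For $\Phi$: starting from an $A$-twisted map, i.e. a simplicial map $\phi\colon \MV_\bullet((M_i)_{i\in I})\to B_\bullet(X,A)$, apply geometric realization to get $|\phi|\colon |\MV_\bullet((M_i)_{i\in I})|\to B(X,A)$, and precompose with a homotopy inverse $s\colon M\to |\MV_\bullet((M_i)_{i\in I})|$ of the canonical homotopy equivalence $p\colon|\MV_\bullet((M_i)_{i\in I})|\to M$; set $\Phi([\phi])=[|\phi|\circ s]$. That $\Phi$ is well defined on equivalence classes requires two things. First, refinement invariance: a refinement $(M'_j)$ of $(M_i)$ induces $r\colon\MV_\bullet((M'_j))\to \MV_\bullet((M_i))$ whose realization commutes with the projections to $M$ and is therefore a homotopy equivalence over $M$, so $|\phi\circ r|\circ s'\simeq |\phi|\circ s$. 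Second, homotopy invariance: a twisted homotopy between $\phi_0$ and $\phi_1$ — a twisted map on $M\times[0,1]$ for some cover that restricts, after refinement, to $\phi_0,\phi_1$ at the two ends — realizes, after composing with a section of the projection to $M\times[0,1]$ and restricting to $M\times\{0,1\}$, to a homotopy from $\Phi([\phi_0])$ to $\Phi([\phi_1])$; here one invokes refinement invariance again to match the restrictions at the ends with the given $\phi_0,\phi_1$.

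For $\Psi$, the essential construction: given $f\colon M\to B(X,A)$ we must produce a cover of $M$ together with a simplicial map to $B_\bullet(X,A)$. The key point is that $B(X,A)$ carries a tautological $A$-twisted map to $X$, relative to a canonical open cover $(U_n)_{n\geq 0}$: under the standing hypotheses that $A$ is well pointed and $A$, $X$ have the homotopy type of CW complexes, $B_\bullet(X,A)$ is a good (proper) simplicial space, so the skeletal filtration of $B(X,A)$ is by cofibrations and one may take $U_n$ to be a suitable open neighborhood of the image of $X\times A^n\times \mathrm{int}(\Delta^n)$ that deformation retracts onto that stratum. The face and degeneracy maps of the bar construction then supply on each intersection $U_{n_0}\cap\dots\cap U_{n_k}$ the maps to $X\times A^k$ required to assemble a simplicial map $\MV_\bullet((U_n))\to B_\bullet(X,A)$ whose realization, composed with the canonical homotopy equivalence $|\MV_\bullet((U_n))|\to B(X,A)$, is homotopic to the identity of $B(X,A)$. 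Pulling this cover and this simplicial data back along $f$ produces an $A$-twisted map $\Psi(f)$ from $M$ to $X$; homotopic $f$'s pull back to twisted-homotopic data by running the same construction over $M\times[0,1]$. (Equivalently, one may first replace $B(X,A)$ by the realization of a bisimplicial set via a CW/singular approximation and combinatorialize $f$ by simplicial approximation, reducing to the classical cocycle description of maps into a bar construction; the compactness of $M$ makes this harmless.)

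Finally one checks $\Phi\circ\Psi=\mathrm{id}$ and $\Psi\circ\Phi=\mathrm{id}$. The first is immediate from the fact that the realization of the tautological twisted map on $B(X,A)$ is homotopic to the identity: realizing $\Psi(f)$ and composing with a section of $|\MV_\bullet((f^{-1}U_n))|\to M$ recovers $[f]$. For the second, naturality of the tautological construction shows that pulling back the canonical cover of $B(X,A)$ along $|\phi|\circ s$ yields a refinement of $(M_i)$ on which $\phi$ is recovered up to the equivalence relation (refinement together with twisted homotopy). The main obstacle is exactly the construction of $\Psi$ and this last compatibility: converting a genuine continuous map into combinatorial cocycle data over a cover, and verifying that refinement of covers plus twisted homotopy on the twisted-map side corresponds precisely to ordinary homotopy on the mapping-space side. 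This is where the good-simplicial-space hypotheses on $B_\bullet(X,A)$ are indispensable — they are what guarantee that the realization admits a cover fine enough, carrying a tautological twisted structure, to pull back.
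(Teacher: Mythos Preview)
The paper does not supply a proof of this statement: it is quoted as Proposition~A.16 of \cite{ACGK} and terminated immediately with \qed, indicating that the authors defer entirely to that reference. There is therefore no argument in the present paper to compare your proposal against.

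That said, a comment on your sketch. The direction $\Phi$ is standard and correct, as is the observation that refinement and twisted homotopy translate into ordinary homotopy after realization. The construction of $\Psi$, however, does not quite work as you describe it. Covering $B(X,A)$ by open neighborhoods $U_n$ of the degree-$n$ strata and then asking for maps $U_{n_0}\cap\cdots\cap U_{n_k}\to X\times A^k$ does not give what you want: a point in such an intersection retracts naturally to a point of $X\times A^{n_k}$, not $X\times A^k$, and the face maps of the bar construction do not manufacture the missing element of $A^k$ from the indices $n_0<\cdots<n_k$ alone. The usual remedies are either to index the cover by \emph{barycentric coordinates} rather than by simplicial degree (this is Segal's device in \cite{S}, where the Mayer--Vietoris construction originates), or to pass through a CW/simplicial approximation of $f$ relative to a triangulation of $M$ and read off the cocycle data combinatorially. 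You mention the second option parenthetically; in practice that is the cleaner route, and it is the one you should make the primary argument if you want a self-contained proof. The ingredients you list---goodness of $B_\bullet(X,A)$, compactness of $M$ forcing finiteness of the cover, cofibrancy of skeleta---are the right ones; it is only the combinatorics of the tautological cover that needs to be corrected.
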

 
When $X$ is itself a monoid, and the action on the unit defines a monoid map $A \to X$, we obtain maps of bar constructions
\begin{equation} \label{eq:sequence_bar}
B(X,A)\to BA \to BX,  
\end{equation}
where $BA$ and $BX$ refer to setting the right module to be a point. This bar construction is a model for the homotopy quotient of $X$ by the action of $A$, and in many cases the sequence in Equation~(\ref{eq:sequence_bar}) is a fibration sequence. In particular, $B(X,A)$ is often the homotopy fiber of $BA\to BX$.


Throughout the paper, we shall find it useful to have alternate models of $ B(X,A) $ and similar spaces. To this end, we shall consistently use the following notational convention: let $X_n$ be a collection of spaces indexed by non-negative integers, and equipped with \emph{stabilization} maps $X_n \to X_{n+1}$. We write $X$ for the colimit of $X_n$ given by stabilization, $\mathcal{X}$ for the disjoint union of the spaces $X_n$. We write $\mathcal{X}_\infty$ for the colimit $\mathcal{X}\to \mathcal{X} \to \cdots$ induced by the stabilization maps. In particular $\mathcal{X}_\infty = \Z \times X$.

We warn that two slightly different cases of this will be used. 1) Sometimes each $X_n$ is a monoid and the maps $X_n \to X_{n+1}$ are monoids maps hence $X$ is a monoid. 2) Sometimes $\mathcal{X}$ is the monoid and the product structure respects the $\N$-grading. Moreover, in this case the maps $X_n \to X_{n+1}$ are given by left multiplication by a specific element in $X_1$, and even though one may define an $A_\infty$ monoid structure on $X$ from this data, we will not need it and $X$ does not have an obviously defined monoid structure.

\subsection{Stable vector bundles with spherical trivializations}
Let $M$ be a compact manifold. We write $\id_n$ for the identity map of $\R^n$ (or of $S^n=\R^n\cup\{\infty\}$).

\begin{definition}
A \emph{vector bundle over $M$ with a spherical trivialization} is a pair $(E,\theta)$
where $E$ is a rank $n$ real vector bundle over $M$ and $\theta : J(E) \to M\times S^n$
is a fibered homotopy equivalence (here $J(E)$ is the sphere bundle obtained from $E$
by one-point compactifying each fiber).

Two pairs $(E,\theta)$ and $(E',\theta')$ are \emph{stably equivalent} if
there exist integers $k$ and $k'$ and a vector bundle isomorphism $\psi:E\times \R^k \to E'\times \R^{k'}$
such that $(\theta'\wedge\id_{k'})\circ J(\psi)$ is homotopic to $\theta\wedge \id_k$ through fibered maps.
\end{definition}

Let $O_k$ be the topological group of linear isometries of $\R^k$ and $G_k$ the topological monoid
of self-homotopy equivalences of $S^k=\R^k\cup \{\infty\}=J(\R^k)$ preserving the base point $\infty$. Following the notation set in Section \ref{sec:notation-terminology}, we then let $O=\colim O_k$ and $G=\colim G_k$ under the stabilization map $g\mapsto g\times \id_1$, and
denote $B_\bullet(G,O)$ the simplicial space obtained from the bar construction, and  $B(G,O)$ its geometric realization. Proposition~\ref{lem:bar3} proves that
\begin{align*}
  G \to B(G,O) \to BO
\end{align*}
is a fibration sequence and as $G$ is group like ($\pi_0(G)$ is a group) it is easy to extend this to the right and conclude that $B(G,O) \to BO \to BG$ is a homotopy fibration sequence. This justifies why we denote this space $G/O$ in the introduction. There is, however, a more concrete way of identifying $B(G,O)$ as the homotopy fiber of $BO \to BG$. Indeed, as a null homotopy in $BG$ corresponds to a trivialization we now explain this.

There is a concrete way to realize that $B(G,O)$ is a classifying space for stable vector bundles with a spherical trivialization. We start by noting that an $O$-twisted map from $M$ to $G$ consists of
maps $g_i : M_i \to G$ for each $i$ and $o_{ij} : M_{ij}=M_i\cap M_j \to O$ for each $i<j$
such that $g_i(x) \circ o_{ij}(x)=g_j(x)$ for each $x\in M_{ij}$ with $i<j$ and consequently $o_{ij}(x) \circ o_{jk}(x)=o_{ik}(x)$ for each $x\in M_{ijk}$
for $i<j<k$. From now on we shorten these fiber wise compositions by suppressing the element $x$ of $M$.

Given an $O$-twisted map $(g_i,o_{ij})$ from $M$ to $G$,
we can assume, up to refining the cover (or simply shrinking), that
all $g_i$ and $o_{ij}$ belong to $G_N$ and $O_N$ respectively for some $N\in \N$. This datum allows
us to construct a vector bundle with a spherical trivialization $(E,\theta)$ in the usual way:
set 
\[E=\coprod_i(M_i\times \R^N)/\sim \]
where $(x,v)\in M_j\times \R^N \sim (x,o_{ij}(x)(v))\in M_i\times \R^n$ if $x\in M_{ij}$,
and
\[\theta : J(E) \to M \times S^N\]
by the formula $\theta(x,v)=(x,g_i(x)(v))$ for $(x,v)\in M_i \times \R^N$.

\begin{proposition}
Let $M$ be a compact manifold, the above assignment gives a bijection between
the following sets
\begin{itemize}
\item Equivalence classes of $O$-twisted maps from $M$ to $G$,
\item Homotopy classes of maps $M\to B(G,O)$.
\item Pairs $(E,\theta)$ consisting of a vector bundle and a spherical trivialisation, up to stable isomorphism.
\end{itemize} \qed
\end{proposition}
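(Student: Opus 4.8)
The bijection between the first two bullet points is precisely Proposition~\ref{prop:homotopy_class_twsited} applied to the monoid $A=O$ acting on the module $X=G$ by right multiplication, so the only content to establish is that the explicit assignment $(g_i,o_{ij})\mapsto (E,\theta)$ described above descends to a \emph{bijection} between equivalence classes of $O$-twisted maps $M\to G$ and stable isomorphism classes of pairs $(E,\theta)$. The plan is to verify the three standard points: the assignment is well defined on equivalence classes, it is surjective, and it is injective.

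For well-definedness I would check three things. First, the output does not depend on the auxiliary integer $N$: replacing $N$ by $N+1$ via the stabilization maps $G_N\to G_{N+1}$, $O_N\to O_{N+1}$ (which are built into the colimits defining $G$ and $O$, and hence into the bar construction) replaces $(E,\theta)$ by $(E\oplus\underline{\R},\theta\wedge\id_1)$, which is stably equivalent to $(E,\theta)$ by definition. Second, refining the cover does not change the isomorphism class of $(E,\theta)$, by the usual clutching argument. Third, if two $O$-twisted maps extend to an $O$-twisted map on $M\times[0,1]$, then (after passing to a common $N$) the same construction over $M\times[0,1]$ produces a pair $(\widetilde E,\widetilde\theta)$ restricting to the two given ones at the endpoints; since $M\times[0,1]$ deformation retracts onto $M\times\{0\}$, the bundle isomorphism $\widetilde E\cong \mathrm{pr}^*(\widetilde E|_{M\times 0})$ together with a fiberwise homotopy of the $\theta$'s exhibits a stable equivalence of the endpoints. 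Surjectivity is easy: given $(E,\theta)$ with $E$ of rank $n$, I would pick a finite totally ordered cover $(M_i)_{i\in I}$ over which $E$ trivializes, use a fiber metric to make the transition maps $o_{ij}\co M_{ij}\to O_n$, homotope $\theta$ so that it carries the section at infinity of $J(E)$ to $M\times\{\infty\}$, read $\theta|_{M_i}$ off through the trivialization as a map $g_i\co M_i\to G_n$, and observe that compatibility of $\theta$ with the trivializations forces $g_i\circ o_{ij}=g_j$, so $(g_i,o_{ij})$ is an $O$-twisted map whose associated pair is isomorphic to $(E,\theta)$.

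Injectivity is where the work is, and it is the step I expect to be the main obstacle. Starting from two $O$-twisted maps $(g_i,o_{ij})$ and $(g'_j,o'_{jk})$ with stably isomorphic associated pairs, I would first stabilize both — which, as noted above, does not change their equivalence classes, this being exactly what the colimit structure on $G$ and $O$ buys us — to reduce to a common cover, common $N$, and, after absorbing the two a priori unequal stabilizing summands $\R^k,\R^{k'}$ of the definition of stable isomorphism into further stabilizations, to a \emph{genuine} rank-$N$ bundle isomorphism $\psi\co E\to E'$ with $\theta'\circ J(\psi)$ fiberwise homotopic to $\theta$. This reduction from "stably isomorphic" to "isomorphic with matching trivializations" is the delicate bookkeeping. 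Once there, the fiberwise homotopy $\theta'\circ J(\psi)\simeq\theta$ provides one leg of an $O$-twisted map over $M\times[0,1]$, and the conjugating $0$-cochain $(\psi_i\co M_i\to O_N)$, for which $o'_{ij}=\psi_i\,o_{ij}\,\psi_j^{-1}$, provides the other leg via the standard simplicial homotopy in $B_\bullet(G,O)$ attached to such a cochain (equivalently, the mapping-cylinder argument showing that isomorphic clutching data yield homotopic classifying maps); concatenating the two legs shows the original twisted maps are equivalent. As a conceptual cross-check one can bypass the cocycles entirely: by the fibration sequence $B(G,O)\to BO\to BG$ of Proposition~\ref{lem:bar3}, a map $M\to B(G,O)$ is the same datum as a stable vector bundle $M\to BO$ together with a null-homotopy of the underlying stable spherical fibration $M\to BG$, and unwinding that null-homotopy is exactly the datum of $\theta$ up to stable equivalence; it then remains only to observe that this identification is the one induced by the explicit assignment, which is a matter of unraveling definitions.
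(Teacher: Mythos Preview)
The paper gives no proof of this proposition at all: the statement ends with a \qed\ symbol and no argument follows. The authors evidently regard it as immediate from the surrounding discussion --- the bijection between the first two bullets is literally Proposition~\ref{prop:homotopy_class_twsited}, and the third bullet is meant to follow from the paragraph just before, which identifies $B(G,O)$ with the homotopy fiber of $BO\to BG$ and observes that a null-homotopy in $BG$ is the same datum as a spherical trivialization.

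Your proposal is correct and supplies exactly the details the paper suppresses. Your ``conceptual cross-check'' via the fibration sequence $B(G,O)\to BO\to BG$ is in fact the paper's entire implicit argument; the cocycle-level verification you spell out (well-definedness under stabilization and refinement, surjectivity by local trivialization, injectivity by absorbing the stabilizing summands and building a concordance) is more than the authors ask for but is the honest way to check that the explicit assignment in the text realizes the abstract identification. One small point to tighten in your surjectivity step: since $G_k$ is defined as \emph{based} self-equivalences of $S^k$, you do need the homotopy of $\theta$ to a basepoint-preserving map that you mention, and you should note this costs nothing stably (the inclusion of based into unbased self-equivalences of $S^k$ is highly connected).
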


Let us thus consider $\OO=\coprod_k O_k$ and $\GG=\coprod_k G_k$ and make them topological monoids under direct sum, so that the inclusion $\OO \subset \GG$ is an inclusion of monoids and in particular $\OO$ acts on $\GG$ by using the direct sum. Given an $\OO$-twisted map $(g_i,o_{ij})$ from $M$ to $\GG$ we may define an $O$-twisted map $(g_i',o_{ij}')$ from $M$ to $G$ by
\begin{align*}
  g_i' = g_i \qquad \textrm{and} \qquad o_{ij}' = \id_{n_i} \times o_{ij}.
\end{align*}
This in fact defines a simplicial map 
\begin{align}
  \label{eq:twisted-derivative:1}
  c:B_\bullet(\GG,\OO)\to B_\bullet(G,O)
\end{align}
defined by mapping a $p$-simplex $(g,o_1,\dots,o_p)$ with $g\in G_m$, $o_i\in O_{n_i}$ to

\[(g, \id_m\times o_1,\dots,\id_m\times \id_{n_1}\times \cdots \times \id_{n_{p-1}}\times o_p).\]

Assume that each $g_i$ lands in $G_{n_i}$ for some $n_i \in \N$
and put $N=\max n_i$. Then $g'_i\in O_N$ and $o'_{ij}\in O_N$ and as above
we obtain a vector bundle with a spherical trivialization $(E,\theta)$
associated to $(g'_i,o'_{ij})$.

The map $c$ is in fact a homotopy equivalence, but we will not need this.

 \begin{lemma}\label{lem:minusone}
Let $(g_i,o_{ij})$ be an $\OO$-twisted map from $M$ to $\GG$. Then $(-g_i,-o_{ij})$ represents
the same element of $[M,B(G,O)]$ under the map $c:B(\GG,\OO)\to B(G,O)$.
\end{lemma}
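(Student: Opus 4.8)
The plan is to reduce the statement to the elementary fact that in even dimensions $-\id$ is isotopic to $\id$ through orthogonal transformations. Fix an $\OO$-twisted map $(g_i,o_{ij})$ with $g_i\colon M_i\to G_{n_i}$ and $o_{ij}\colon M_{ij}\to O_{n_j-n_i}$, and choose an \emph{even} integer $N\geq\max_i n_i$ (always possible: enlarging $N$ by $1$ amounts to stabilising the construction, which does not affect the resulting class in $[M,B(G,O)]$). Recall that $c$ sends $(g_i,o_{ij})$ to the $O$-twisted map to $G$ whose $0$-simplices are $g_i'=g_i\times\id_{N-n_i}$ and whose transitions are $o_{ij}'=\id_{n_i}\times o_{ij}\times\id_{N-n_j}$, valued in $G_N$ and $O_N$. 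Writing the minus operation as conjugation by $-\id$ on each Euclidean factor (so that $-o=o$ for a linear isometry $o$), the decomposition $-\id_N=(-\id_{n_i})\times(-\id_{N-n_i})$ together with the centrality of $-\id$ in the linear group shows that $c(-g_i,-o_{ij})$ is the $O$-twisted map with $0$-simplices $(-\id_N)\,g_i'\,(-\id_N)$ and transitions $(-\id_N)\,o_{ij}'\,(-\id_N)=o_{ij}'$.

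Thus $c(-g_i,-o_{ij})$ is literally the composite of the simplicial map $c(g_i,o_{ij})\colon\MV_\bullet((M_i))\to B_\bullet(G_N,O_N)$ with $B_\bullet(\kappa)$, where $\kappa$ is conjugation by $-\id_N$: a monoid automorphism of $G_N$ restricting to an automorphism of the subgroup $O_N$, hence an automorphism of the pair and compatible with the bar construction. Since $N$ is even, $-\id_N$ lies in the identity component $SO_N$ of $O_N$; choosing a path $\gamma\colon[0,1]\to SO_N$ from $-\id_N$ to $\id_N$, the family $t\mapsto \mathrm{conj}_{\gamma(t)}$ is a homotopy from $\kappa$ to the identity through automorphisms of the pair $(G_N,O_N)$. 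Passing to bar constructions and geometric realisations, $|B_\bullet(\kappa)|$ is homotopic to the identity of $|B_\bullet(G_N,O_N)|$; composing this homotopy with the realised map $|\MV_\bullet((M_i))|\simeq M\to|B_\bullet(G_N,O_N)|$ coming from $c(g_i,o_{ij})$, and then with the stabilisation into $B(G,O)$, yields a homotopy between the two maps $M\to B(G,O)$ classifying $c(g_i,o_{ij})$ and $c(-g_i,-o_{ij})$. By Proposition~\ref{prop:homotopy_class_twsited} these represent the same element of $[M,B(G,O)]$. In the vector bundle picture this simply says that $(-g_i,-o_{ij})$ carries the \emph{same} underlying bundle $E$, with the spherical trivialisation $\theta$ replaced by the one obtained from $\theta$ by post-composing each fibre $S^N$ with $-\id_N$; as $-\id_N$ is isotopic to $\id_N$ inside $SO_N\subset G_N$, the two trivialisations are fibrewise homotopic and the pairs are stably equivalent.

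The one step that needs genuine care is the identification in the first paragraph: one must check that the fibrewise reflections $-\id_{n_i}$ entering the minus operation, which act in ranks depending on the index $i$, assemble after the padding built into $c$ into conjugation by the single global reflection $-\id_N$. This is exactly what $-\id_N=(-\id_{n_i})\times(-\id_{N-n_i})$ and the triviality of conjugating a linear map by $-\id$ deliver, but it is where one must keep track of the various stabilisations. Everything else is formal, and the argument is robust: for any convention in which $-g$ differs from $g$ by composition with or conjugation by a reflection, the same reasoning applies, the essential input being that reflections are isotopic to the identity in even dimension.
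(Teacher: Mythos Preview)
There is a genuine error in your identification of the minus operation. In the paper's convention, for $g\in G_k$ the map $-g$ is $v\mapsto -g(v)$, i.e.\ \emph{post-composition} with $-\id_k$, not conjugation. In particular, for $o\in O_k$ one has $-o=(-\id_k)\circ o$, which is \emph{not} equal to $o$ (its determinant differs by $(-1)^k$). This is visible in the paper's own proof of the lemma, where the transition maps of $E^-$ are $\id_{n_i}\times(-o_{ij})\times\id_{N-n_j}$, genuinely different from those of $E$.

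With the correct meaning, your key claim fails: $c(-g_i,-o_{ij})$ is \emph{not} the conjugate of $c(g_i,o_{ij})$ by the single element $-\id_N$. A direct computation gives
\[
(-g_i)'=\bigl((-\id_{n_i})\times\id_{N-n_i}\bigr)\circ g_i',\qquad
(-o_{ij})'=\Psi_i\,o_{ij}'\,\Psi_j^{-1},
\]
where $\Psi_i=(-\id_{n_i})\times\id_{N-n_i}\in O_N$ depends on $i$. Thus the two twisted maps differ by a gauge transformation with an $i$-dependent cochain $\Psi_i$, not by conjugation by a fixed element. Your proposed homotopy via a path from $-\id_N$ to $\id_N$ in $SO_N$ therefore does not apply; note moreover that $\det\Psi_i=(-1)^{n_i}$, so the $\Psi_i$ need not even lie in the same component of $O_N$, and no single path can connect them all to $\id_N$. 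Your closing remark that ``the same reasoning applies'' for composition rather than conjugation is precisely where the argument breaks.

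The paper handles this $i$-dependence directly in the bundle picture: the maps $\Phi_i=\id_{n_i}\times(-\id_{N-n_i})$ are checked to intertwine the transition functions of $E$ and $E^-$, hence glue to a bundle isomorphism $\Phi\colon E\to E^-$ satisfying $\theta^-\circ\Phi=-\theta$. Only \emph{then} does the even-dimensional fact enter, to identify $(E,\theta)$ with $(E,-\theta)$. Your instinct about even $N$ is the right endgame, but you must first produce the bundle isomorphism absorbing the $i$-dependent reflections; that is the step your argument is missing.
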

\begin{proof}
It suffices to prove that the vector bundles with spherical trivialization $(E,\theta)$ and $(E^-,\theta^-)$
obtained from the above construction applied to $(g_i,o_{ij})$ and $(-g_i,-o_{ij})$ respectively
are stably equivalent. With the above notations, we observe that the maps
\[ \Phi_i:= \id_{M_i}\times \id_{n_i}\times (-\id_{N-n_i}) : M_i\times \R^N\to M_i\times \R^N\]
satisfy for $x \in M_{ij}$ fixed and $(v,u,w) \in \bR^{n_i} \times \bR^{n_{ij} } \times \bR^{N-n_j}$ that
$$ (\text{id}_{n_i} \times (-o_{ij} ) \times \text{id}_{N-n_j} ) \circ \Phi_i(v,u,w) = 
 (\text{id}_{n_i} \times (-o_{ij} ) \times \text{id}_{N-n_j} )(v,-u-w)$$ $$
= (v,o_{ij}u,-w)
=\Phi_j(v,o_{ij}u,w)
= \Phi_j \circ ( \text{id}_{n_i} \times o_{ij} \times \text{id}_{N-j} )(v,u,w)
$$
So we have a commutative diagram
  \begin{center}
    \begin{tikzcd}
      M_i \cap M_j \times \bR^N  \arrow[r, "\Phi_i"]  \ar[d,"\text{id}_{n_i} \times o_{ij} \times \id_{n_j} "] &  M_i \cap M_j \times \bR^N   \ar[d,"\text{id}_{n_i} \times(- o_{ij}) \times \id_{n_j}"]   \\
       M_i \cap M_j \times \bR^N  \arrow[r, "\Phi_j"]  &  M_i \cap M_j \times \bR^N 
    \end{tikzcd}  
  \end{center}
  and hence an isomorphism of vector bundles $\Phi:E \to E^-$. Further, we have the equality $\theta^-\circ\Phi=-\theta$,
so $(E,-\theta)\sim (E^-,\theta^-)$. If $N$ is even then multiplication by $-1$ is homotopic to the identity map in $O_N$ and thus
$(E,\theta)\sim (E,-\theta)$. If $N$ is odd, then note that $(E,-\theta)\sim (E\oplus \R, (-\theta) \times \id_1)$ by stabilization and 
$ (E\oplus \R, (-\theta) \times \id_1) \sim (E\oplus \R, - ( \theta \times \id_1) )$ by the global isomorphism $\id_E \oplus (-\id_1) : E \oplus \bR \to E \oplus \R$. For $N+1$ even multiplication by $-1$ is homotopic to the identity map in $O_{N+1}$ and thus we also deduce that $(E,\theta)\sim (E,-\theta)$ as desired.
\end{proof}

Given two pairs $(E,\theta)$ and $(E',\theta')$ over $M$ as above we get another one by direct (Whitney) sum
$(E\oplus E', \theta \wedge \theta')$. This endows $[M,B(G,O)]$ with a monoid structure,
which is in fact an abelian group. At the level $N\geq 0$ this is represented by the direct sum (and smash)
\begin{align*}
  B(G_N,O_N) \to B(G_{2N},O_{2N}).
\end{align*}
To describe this using the monoids $\OO$ and $\GG$ one has to be a little careful with reordering coordinates. 
It will be useful for us to have a specific representative for twice a given vector bundle with spherical trivialization. So, let
\begin{align*}
  A_k: \R^k \times \R^k \to \R^{2k}
\end{align*}
be given by 
\[A_k(x,y) = (x_1,y_1,x_2,y_2,\dots,x_k,y_k).\]
Then we may define a (doubling) monoid map $\delta : \GG \to \GG$ by the map
\begin{align*}
  \delta(g) = A_k \circ (g\oplus -g) \circ A_k^{-1}
\end{align*}
for $g\in G_k$. The fact that $A_{k_1}(x,y) \oplus A_{k_2}(x',y') = A_{k_1 + k_2}((x,x'),(y,y'))$ implies that this is indeed a monoid map, as one checks that for $g_1\in G_{n_1}$ and $g_2\in G_{n_2}$:
\begin{align*}
\delta_{n_1+n_2}(g_1 \oplus g_2) &= A_{n_1+n_2}\circ (g_1 \oplus g_2 \oplus -g_1 \oplus -g_2 ) \circ A_{n_1+n_2}^{-1} \\
&= (A_{n_1}\times A_{n_2}) \circ (g_1 \oplus -g_1 \oplus g_2 \oplus -g_2) \circ (A_{n_1} \times A_{n_2} )^{-1} \\
&= \delta_{n_1}(g_1) \oplus \delta_{n_2}(g_2).
\end{align*}
This preserves the submonoid $\OO$, and thus induces a simplicial map
\begin{align*}
  \delta : B(\GG,\OO) \to B(\GG,\OO).
\end{align*}

\begin{lemma}\label{lem:twice}
Let $(g_i,o_{ij})$ be an $\OO$-twisted map from $M$ to $\GG$ representing
$\alpha\in [M,B(G,O)]$, then $(\delta (g_i), \delta (o_{ij}))$ represent $2\alpha \in [M,B(G,O)]$.
\end{lemma}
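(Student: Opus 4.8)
\textbf{Proof plan for Lemma \ref{lem:twice}.}
The plan is to trace through the bundle-plus-trivialization construction described just before Lemma \ref{lem:minusone} and check that $(\delta(g_i),\delta(o_{ij}))$ produces (up to stable isomorphism, and up to reordering coordinates) the Whitney sum of the pair $(E,\theta)$ associated to $(g_i,o_{ij})$ with itself, using Lemma \ref{lem:minusone} to absorb the sign in the second summand $-g_i$. First I would fix $N$ large enough that all $g_i$ land in $G_{n_i}\subset G_N$ and all $o_{ij}$ in $O_N$, and record the pair $(E,\theta)$ explicitly. Applying the construction to $(\delta(g_i),\delta(o_{ij}))$, the resulting vector bundle $E^\delta$ has transition maps $A_{n_i}\circ(o_{ij}\oplus -o_{ij})\circ A_{n_i}^{-1}$ and trivialization $\theta^\delta(x,A_{n_i}(v,w))=(x,A_{n_i}(g_i(x)v,-g_i(x)w))$; the coordinate-shuffle isomorphisms $A_k$ then identify $(E^\delta,\theta^\delta)$ with the external object built from $(g_i,o_{ij})$ on the first block and $(-g_i,-o_{ij})$ on the second, i.e. with $(E\oplus E^-,\,\theta\wedge\theta^-)$ in the notation of Lemma \ref{lem:minusone}, after the reordering $A_{n_i}$.

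The second step is to invoke Lemma \ref{lem:minusone}: the pair $(-g_i,-o_{ij})$ represents the same class in $[M,B(G,O)]$ as $(g_i,o_{ij})$, namely $\alpha$. Since the monoid structure on $[M,B(G,O)]$ is exactly Whitney sum $(E,\theta)\oplus(E',\theta')\mapsto(E\oplus E',\theta\wedge\theta')$ — as recalled in the paragraph preceding the lemma — and this operation is well-defined on stable equivalence classes, we conclude that $(E^\delta,\theta^\delta)$ represents $\alpha+\alpha=2\alpha$. One must be slightly careful that the reordering isomorphisms $A_{n_i}$ are compatible across overlaps $M_{ij}$, i.e. that they intertwine the transition maps of $E^\delta$ with those of $E\oplus E^-$; this is precisely the identity $A_{n_i}(x,y)\oplus A_{n_j}(x',y')=A_{n_i+n_j}((x,x'),(y,y'))$ already used to check that $\delta$ is a monoid map, combined with the fact that $\delta(o_{ij})$ was defined via the same $A$'s, so the check is routine but should be spelled out diagrammatically as in the proof of Lemma \ref{lem:minusone}.

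The main obstacle I anticipate is purely bookkeeping: matching the $\N$-gradings and the coordinate orderings so that ``the second copy carries the data $(-g_i,-o_{ij})$'' is literally true rather than true-up-to-conjugation-by-something-one-forgot. In particular one should take care whether $N$ must be replaced by $2N$ and whether stabilizing the blocks to a common rank before or after applying $A_N$ changes anything (it does not, but the argument should say why, e.g. by the same even/odd $-1$-is-isotopic-to-identity trick used at the end of Lemma \ref{lem:minusone}, or simply by noting stabilization commutes with $\delta$ up to the canonical reordering). Once the identification $(E^\delta,\theta^\delta)\cong(E\oplus E^-,\theta\wedge\theta^-)$ is in hand, the conclusion $2\alpha$ is immediate from Lemma \ref{lem:minusone} and the definition of the group operation, so no further genuinely new ideas are needed.
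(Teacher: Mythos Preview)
Your plan is correct and is essentially the paper's own argument: reduce $(\delta(g_i),\delta(o_{ij}))$ via the shuffle maps $A_\bullet$ to the Whitney sum of the data $(g_i,o_{ij})$ with $(-g_i,-o_{ij})$, then invoke Lemma~\ref{lem:minusone} and the parity trick for $A_N$ (or $A_N\times(-\id_1)$) at the end. One small cleanup: the transition data for $E^\delta$ involve $A_{n_{ij}}$ (with $n_{ij}=n_j-n_i$) rather than $A_{n_i}$, and the paper sidesteps your compatibility-across-overlaps concern by first passing through $c$ to a common rank $N$ and then conjugating by the single global map $A_N$, which makes the bookkeeping cleaner than working with varying $A_{n_i}$.
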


\begin{proof}

By definition of the group structure on stable isomorphism classes of vector bundles with spherical trivializations, twice $\alpha$ is represented by $(g'_i \oplus g'_i,o'_{ij} \oplus o'_{ij})$. Lemma~\ref{lem:minusone} shows that the vector bundle with spherical trivialization $(E,\theta)$
associated to $(g'_i \oplus (-g_i)',o'_{ij} \oplus (-o_{ij})')$ also represents $2\alpha$, where $(g'_i,o'_{ij})=c(g_i,o_{ij})$ and $((-g_i)',(-o_{ij})')=c(-g_i,-o_{ij})$.

Now, we have 
\[c(\delta(g_i),\delta(o_{ij}))=(A_N\circ (g'_i \oplus (-g_i)')\circ A_N^{-1}, A_N\circ (o'_{ij} \oplus (-o_{ij})') \circ A_N^{-1})\]
 Let $(E',\theta')$ be the corresponding vector bundle with spherical trivialization.  Then $A_N$ gives an isomorphism between  $(E,A_N \circ \theta)$
and $(E', \theta')$. If $N$ is such that $A_N$ is homotopic to the identity we conclude that $(E',\theta') \sim (E,\theta)$ as desired. Otherwise $A_N\times (-\id_1)$
is homotopic to the identity map and we conclude as in the proof of Lemma~\ref{lem:minusone}.\end{proof}

\subsection{The normal invariant of a homotopy equivalence}

Let $L$ and $M$ be closed, connected, smooth manifolds of the same dimension which are homotopy equivalent.

\begin{definition}\label{def:normalinvt}
The \emph{normal invariant} of a homotopy equivalence $\pi: L \to M$ is the element of $[M,B(G,O)]$ which is represented
by a pair $(E,\theta)$ where:
\begin{enumerate}
\item $E \to M$ is a rank $n$ vector bundle (the integer $n$ is independent of the dimension of $L$),
\item $\theta : J(E) \to M\times S^n$ is a fibered map which is smooth near $\theta^{-1}(M\times \{0\})\subset E$
and transverse to $M\times\{0\}$, and such that there exists a diffeomorphism $L\to \theta^{-1}(M\times\{0\})$ lifting $\pi$
(up to homotopy) under the projection $\theta^{-1}(M\times\{0\})\to M$.
\end{enumerate}
\end{definition}

\begin{remark}
Since $\theta^{-1}(M\times\{0\})\to M$ is a homotopy equivalence, for a regular value $x$ of
$\theta^{-1}(M\times\{0\})\to M$, the map $\theta_x : J(E)_x\to S^n$
has degree $\pm 1$ and therefore each $\theta_x$ has degree $\pm 1$. Hence $\theta$ is a fibered homotopy
equivalence and $(E,\theta)$ indeed determines an element of $[M,B(G,O)]$. 
\end{remark}
The following result seems to be well-known, but we are including a proof as we have not been able to identify a reference:
\begin{lemma}
The normal invariant of $\pi$ is well-defined, in the sense that the datum $(E,\theta)$ exists and is unique up to equivalence.  
\end{lemma}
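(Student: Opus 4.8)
The plan is to prove existence and uniqueness separately, both by standard transversality and smoothing arguments applied to a fibered homotopy equivalence.

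\textbf{Existence.} Since $\pi\colon L\to M$ is a homotopy equivalence of closed smooth manifolds of the same dimension, I would first produce some stable vector bundle data on $M$ and a degree-one fibered trivialization of its sphere bundle that is compatible with $\pi$. Concretely, embed $L$ into $M\times\R^k$ for large $k$ so that the composition with the projection to $M$ is $\pi$ (such an embedding exists since $\pi$ is homotopic to a map covered by a bundle monomorphism; alternatively embed $L$ in a high-dimensional Euclidean space and use that $\pi$ is a homotopy equivalence to fold the target factor onto $M$). Let $\nu$ be a tubular neighborhood of $L$ in $M\times\R^k$, with $E\to L$ its normal bundle, so that $\nu$ is diffeomorphic to the total space of $E$. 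The Pontryagin--Thom collapse $M\times S^k \to \mathrm{Th}(E)$, followed by the zero-section-preserving identification and the projection $L\to M$, yields a fibered map $\theta\colon J(\pi^*E')\to M\times S^n$ (where $E'$ is $E$ pushed to $M$ via $\pi$ — here one uses that $\pi$ being a homotopy equivalence lets us descend $E$ to a bundle over $M$ up to stabilization). By construction $\theta$ is smooth near and transverse to $M\times\{0\}$, its zero set is diffeomorphic to a tubular-neighborhood core diffeomorphic to $L$, and the induced map to $M$ is homotopic to $\pi$; the degree count in the Remark shows $\theta$ is a fibered homotopy equivalence. This produces the required $(E,\theta)$; one should check the rank $n$ can be taken independent of $\dim L$ by stabilizing.

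\textbf{Uniqueness.} Suppose $(E_0,\theta_0)$ and $(E_1,\theta_1)$ are two such pairs. After stabilizing both so that $E_0$ and $E_1$ have the same rank $n$, and observing that any two stable trivializations differ by a map to $G$, I would compare the two zero-set data. The key point is that a fibered homotopy $\theta_0\simeq\theta_1$ (through fibered maps, after the stable identification of the bundles) can, by a relative version of the same transversality-plus-smoothing argument used for existence, be taken to be smooth and transverse to $M\times\{0\}$ near its preimage, giving an $h$-cobordism — in fact a diffeomorphism by the uniqueness of tubular neighborhoods and the fact that both zero sets are identified with $L$ — between the two zero sets compatibly with the maps to $M$. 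More carefully: interpret $(E_0,\theta_0)$ and $(E_1,\theta_1)$ as giving two embeddings $L\hookrightarrow M\times\R^n$ over $\pi$, up to isotopy and stabilization; these are unique up to isotopy once $n$ is large (this is where I would invoke Haefliger-type embedding uniqueness in the metastable range, together with the fact that both embeddings realize the same homotopy class of $\pi$ and the same normal data via $\theta_i$). An isotopy of embeddings gives an equivalence of the pairs $(E_i,\theta_i)$ in the sense of the definition of stable equivalence in Section~\ref{sec:norm-invar-models}.

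\textbf{Main obstacle.} I expect the hard part to be the uniqueness statement, and specifically ensuring that the equivalence of the two pairs can be arranged \emph{compatibly with $\pi$ up to homotopy} rather than just abstractly — i.e. controlling the diffeomorphism of zero sets so that it covers $\id_M$ up to homotopy. This requires care in the transversality step (one must make the fibered homotopy smooth and transverse \emph{rel} the already-smooth endpoints, so that no new components of the zero set are created and the cobordism produced really is a product) and in invoking embedding uniqueness only in a range where it applies, which forces the bookkeeping with the stabilization integer $n$. The existence side is comparatively routine Pontryagin--Thom theory, the only subtlety being the descent of the normal bundle of $L$ to a bundle over $M$, which one handles by stably pulling back along a homotopy inverse of $\pi$.
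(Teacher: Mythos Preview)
Your existence sketch has the Pontryagin--Thom construction pointing the wrong way. You collapse $M\times S^k$ onto $\Th(E)$ where $E$ is the normal bundle of $L$ \emph{over $L$}, and then try to push this down to $M$; but the definition requires $\theta$ to go from $J(E)$ (with $E$ a bundle over $M$) to $M\times S^n$. The paper's fix is the key maneuver you are missing: one first constructs a bundle $E$ \emph{over $M$} into which $L$ embeds with \emph{trivial} normal bundle. Concretely, from $j\colon L\hookrightarrow M\times\R^k$ one uses the homotopy equivalence to find $F$ over $M$ with $\pi^*F\oplus\nu(j)\simeq\underline{\R}^n$, and sets $E=F\oplus\underline{\R}^k$; then $L\hookrightarrow E$ has trivial normal bundle and the fiberwise collapse gives $\theta\colon J(E)\to M\times S^n$ directly. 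Your ``push $E$ to $M$ via $\pi$'' does not produce this trivial-normal-bundle situation.

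Your uniqueness argument has a genuine gap even after your isotopy step. Once you have arranged (as the paper does, via stable uniqueness of the bundle $E$ and of the embedding $L\hookrightarrow E$) that $E=E'$ and $\theta^{-1}(M\times\{0\})=(\theta')^{-1}(M\times\{0\})$, the maps $\theta$ and $\theta'$ still induce two possibly distinct trivializations of the normal bundle of $L$ in $E$, differing by some $B\colon L\to O_n$. You do not address this at all, and it is exactly the obstruction to a fibered homotopy rel zero-locus. The paper resolves it by a stabilization trick: replace $(E,\theta)$ by $(E\oplus\underline{\R}^n,\theta\oplus\id)$ and use the bundle automorphism $\id\oplus(B\circ s)$ (with $s$ a homotopy inverse to $\pi$) to show that after stabilization the two trivializations differ by $B\oplus B^{-1}$, which is nullhomotopic in $O_{2n}$. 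Your proposed transversality-plus-cobordism argument does not substitute for this, because a fibered homotopy between $\theta$ and $\theta'$ (which is what stable equivalence demands) simply need not exist before this framing obstruction is killed.
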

\begin{proof}
To prove existence, we pick, for $k$ large enough, an embedding $j:L\to M\times \R^k$ lifting $\pi$ (up to homotopy)
and denote by $\nu(j)$ its normal bundle. Since $\pi$ is a homotopy equivalence, there exists a vector bundle $F$ of rank $n-k$ over $M$
and a trivialization
\[\pi^* F \oplus \nu(j) \simeq L\times \R^n.\]
Let $E=F \oplus \underline{\R}^k$, the direct sum of $F$ and a trivial rank $k$ vector bundle on $M$. The embedding $j':L\to E$ given by $j'(x)=(0,j(x))$ has trivial normal bundle by construction, so the Pontryagin-Thom construction gives the desired map
$\theta : J(E)\to M\times S^n$.

For uniqueness, we observe that since $L\simeq \theta^{-1}(M\times\{0\})$ has trivial normal bundle in $E$, we have
\[TL\oplus \underline{\R}^n\simeq \pi^*E \oplus \pi^*TM,\]
and since $\pi$ is a homotopy equivalence this relation determines $E$ up to stable vector bundle isomorphism.
Next, by general position, the space of embeddings $L\to E$ lifting $\pi$ is non-empty and connected provided
the rank of $E$ is sufficiently large. So given any two pairs $(E,\theta)$ and $(E',\theta')$ as in
Definition~\ref{def:normalinvt} we can assume (up to equivalence) that $E=E'$ and $\theta^{-1}(M\times\{0\})=(\theta')^{-1}(M\times\{0\})$.

The maps $\theta$ and $\theta'$ induce two trivializations of the normal bundle of $L=\theta^{-1}(M\times\{0\})$ in $E$, which thus
differ by a map $B:L\to O_n$. Now $\theta$ and $\theta'$ are homotopic in the space of fibered maps transverse to $M\times\{0\}$ with
given zero locus if and only if $B$ is homotopic to the constant map to the identity $\id \in O_n$. The latter condition can be achieved by stabilization:
Replace $(E,\theta)$ by $(E \oplus \underline{\R}^n, \theta \oplus \id )$ and  $(E',\theta')$ by $(E'\oplus \underline{\R}^n,\theta'\oplus \id)$. For $s:M\to L$
 a homotopy inverse of $\pi$, the pairs $(E \oplus \underline{\R}^n, \theta \oplus \id )$ and $(E   \oplus \underline{\R}^n, \theta \oplus ( B^{-1}\circ s))$ are related by the bundle isomorphism $\id \oplus (B \circ s): E \oplus \underline{\R}^n \to E \oplus \underline{\R}^n$, hence are equivalent.  So after these stabilizations,
the two trivializations of the normal bundle of $L$ differ by $B\oplus (B^{-1}\circ (s \circ \pi))\simeq B \oplus B^{-1}$ which is
homotopic to the constant automorphism $\id \in O_{2n}$. Hence $(E,\theta)$ and $(E',\theta')$ are stably equivalent.
\end{proof}

Let $(g_i,o_{ij})$ be an $\OO$-twisted map from $M$ to $\GG$ defined on a finite
totally ordered open cover $(M_i)_{i\in I}$ of $M$. If $g_i\colon M_i\times \bR^{n_i}\to \bR^{n_i}$ is smooth near $g_i^{-1}(0)$ and is transverse to the origin, then the zero-sets $g_i^{-1}(0) \subset \bR^{n_i}$ glue together as an abstract manifold. Indeed on $M_{ij}$ there is a canonical diffeomorphism $x \mapsto (x,0)$ between $g_i^{-1}(0)$ and $g_j^{-1}(0)=(g_i \oplus o_{ij})^{-1}(0)$. If $M$ is closed, then the union $\bigcup_i g_i^{-1}(0)$ is then a closed (smooth) manifold over $M$. Assuming that $L$ is a closed manifold equipped with a projection map $\pi$ to $M$, and with a diffeomorphism to $\bigcup_i g_i^{-1}(0)$ through which this projection factors, we conclude:

\begin{proposition}\label{prop:recognizeNI}
The element of $[M, B(\GG,\OO)]$ classifying $(g_i,o_{ij})$ represents the normal invariant
of $\pi$ under the map $c:B(\GG,\OO)\to B(G,O)$.
\end{proposition}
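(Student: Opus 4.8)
The plan is to unwind the map $c$ on explicit cocycle data and observe that the concrete bundle-with-trivialization it produces is literally a pair $(E,\theta)$ of the kind demanded by Definition~\ref{def:normalinvt} relative to $\pi$ (here $\pi$ is a homotopy equivalence, so that its normal invariant is defined). Since the normal invariant of $\pi$ is well defined by the lemma above, it then suffices to exhibit this one representative for the class $c(g_i,o_{ij})\in[M,B(G,O)]$.

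First I would run the recipe for $c$. As the cover is finite we have $N=\max n_i<\infty$ and no refinement is needed: $(g_i,o_{ij})$ is carried to the $O$-twisted map $(g_i',o_{ij}')$ with $g_i'=g_i\times\id_{N-n_i}\in G_N$ and $o_{ij}'=\id_{n_i}\times o_{ij}\times\id_{N-n_j}\in O_N$, where over $M_{ij}$ we split $\R^N=\R^{n_i}\times\R^{n_{ij}}\times\R^{N-n_j}$ in accordance with the $\OO$-twisting relation $g_j=g_i\oplus o_{ij}$ (so $n_j=n_i+n_{ij}$). To this $O$-twisted map the construction recalled above attaches the vector bundle with spherical trivialization
\[E=\coprod_i(M_i\times\R^N)/\sim,\qquad \theta(x,v)=\bigl(x,g_i'(x)(v)\bigr)\ \text{for }(x,v)\in M_i\times\R^N,\]
glued by $(x,w)\in M_j\times\R^N\sim(x,o_{ij}'(x)(w))\in M_i\times\R^N$ over $M_{ij}$. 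By Proposition~\ref{prop:homotopy_class_twsited} together with the explicit identification of $B(G,O)$ as a classifying space for vector bundles with spherical trivialization, $(E,\theta)$ represents $c(g_i,o_{ij})$; in particular $\theta$ is automatically a fibered homotopy equivalence.

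Next I would verify that $(E,\theta)$ meets items (1) and (2) of Definition~\ref{def:normalinvt}. Item (1) is immediate, the $o_{ij}'$ being linear isometries satisfying the cocycle identity, so $E$ is a rank-$N$ real vector bundle. For item (2), on the chart $M_i\times\R^N$ one has $\theta(x,v',v'')=(x,g_i(x)(v'),v'')$ for $v=(v',v'')\in\R^{n_i}\times\R^{N-n_i}$, so since $g_i$ is smooth near $g_i^{-1}(0)$ and transverse to the origin, the same holds for $g_i\times\id_{N-n_i}$; hence $\theta$ is smooth near $\theta^{-1}(M\times\{0\})$ and transverse to $M\times\{0\}$. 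It remains to identify $\theta^{-1}(M\times\{0\})$: on $M_i\times\R^N$ it is exactly $g_i^{-1}(0)\times\{0\}$, and a direct computation with the splitting $\R^N=\R^{n_i}\times\R^{n_{ij}}\times\R^{N-n_j}$ shows that the gluing by $o_{ij}'$, read off on these zero loci, is precisely the canonical diffeomorphism $x\mapsto(x,0)$ between $g_i^{-1}(0)$ and $g_j^{-1}(0)=(g_i\oplus o_{ij})^{-1}(0)$ used just before the statement. Therefore $\theta^{-1}(M\times\{0\})$, as a manifold over $M$, coincides with $\bigcup_i g_i^{-1}(0)$, and the given diffeomorphism of $L$ onto $\bigcup_i g_i^{-1}(0)$ through which $\pi$ factors is a diffeomorphism $L\to\theta^{-1}(M\times\{0\})$ lifting $\pi$. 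By Definition~\ref{def:normalinvt} and uniqueness of the normal invariant, $(E,\theta)$ represents the normal invariant of $\pi$, which hence equals $c(g_i,o_{ij})$.

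I do not anticipate a serious obstacle: the argument is bookkeeping. The one step requiring genuine care is the last identification of the gluings on the local zero sets, where one must reconcile the transition isometries $o_{ij}'=\id_{n_i}\times o_{ij}\times\id_{N-n_j}$ of $E$ with the $\OO$-twisting relation $g_j=g_i\oplus o_{ij}$, keeping the coordinate splitting $\R^N=\R^{n_i}\times\R^{n_{ij}}\times\R^{N-n_j}$ compatible over triple overlaps.
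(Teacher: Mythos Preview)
Your proposal is correct and follows essentially the same approach as the paper's proof: apply $c$ to get $(g_i',o_{ij}')$, form the associated $(E,\theta)$, and observe that $\theta^{-1}(M\times\{0\})$ is exactly $\bigcup_i g_i^{-1}(0)$ so that the hypothesised diffeomorphism from $L$ realises Definition~\ref{def:normalinvt}. The paper's version is a three-sentence summary of what you wrote; your extra care in unpacking the stabilised formulas $g_i'=g_i\times\id_{N-n_i}$ and $o_{ij}'=\id_{n_i}\times o_{ij}\times\id_{N-n_j}$ and checking the gluing on zero loci is legitimate bookkeeping that the paper leaves implicit.
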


\begin{proof}
Let $(g'_i,o'_{ij})=c(g_i,o_{ij})$ and $(E,\theta)$ the corresponding pair.
The union of the manifolds $g_i^{-1}(0)$ is realized as $\theta^{-1}(M\times\{0\}) \subset E$
and by assumption there is a diffeomorphism $L\to \theta^{-1}(M\times\{0\})$ lifting $\pi$.
So $(E,\theta)$ represents the normal invariant of $\pi$ as in Definition~\ref{def:normalinvt}.
\end{proof}

\section{Twisted generating functions}\label{sec:gf}

Every exact Lagrangian submanifold $L \subset T^*M$ was shown, in \cite{ACGK}, to admit a twisted generating function of tube type. However the model for the space of functions of tube type considered in \cite{ACGK} consists of functions which are linear at infinity, while for constructing a model which is a monoid, it is more convenient to have a model built from functions which are quadratic at infinity. In this section we make the translation between the two models.

\subsection{Orthogonal quadratic forms}

We start with some preliminaries on quadratic forms.

\begin{definition}
Let $Q_n$ denote the space of quadratic forms $q:\bR^n \to \bR$ which are non-degenerate, of eigenvalues $\pm \frac{1}{2}$, and $\QQ=\coprod_n Q_n$ with the monoid structure of direct sum. We call $q \in \QQ$ {\em orthogonal quadratic forms}. 
\end{definition}

\begin{remark}
The choice of eigenvalues $\pm\frac{1}{2}$ is a minor cosmetic choice designed to ensure that, the gradient $\nabla q$ of every element of $q\in Q_n$ lies in $O_n$ (rather than  in $\mathrm{GL}_n(\R)$).
It is irrelevant because the space of all non-degenerate quadratic forms deformation retracts
onto $\QQ$. In \cite{ACGK}, we chose to work with eigenvalues $\pm 1$ since that made a few bounds on twisted generating functions
easier to check but we could have equally chosen eigenvalues $\pm \frac{1}{2}$. Alternatively, one may work with eigenvalues $\pm 1$ by replacing $O_n$ with $2 O_n$ in the present text.
\end{remark}

We have $Q_n=\coprod_{k+l=n} Q_{k,l}$ where $Q_{k,l}$ consists of those $q \in Q_n$ whose  negative eigenspace has dimension $k$. The space $Q_{k,l}$ is diffeomorphic to the Grassmannian of $k$ dimensional real vector subspaces in $\R^{k+l}$. Indeed the identification assigns to a $k$-dimensional vector subspace $F \subset \bR^{k+l}$ the unique $q \in Q_{k,l}$ with $F$ as its negative eigenspace and $F^\perp$ as its positive eigenspace.

\subsection{Twisted generating functions linear at infinity}

\begin{definition}
Let $M$ be a smooth manifold. A \emph{generating function linear at infinity} over $M$ is a smooth function
\begin{equation}
    f:M\times \R_{w} \times \R_{v}^n\to \R
\end{equation}
satisfying the following properties:
\begin{enumerate}
\item $f$ is linear, with respect to the variable $w$, at infinity, in the sense that 
\[f(x;w,v)=w+g(x;v) + \varepsilon(x;w,v), \quad (x;w,v) \in M \times \bR \times \bR^n,\]
where the projection $\text{supp}(\varepsilon) \to M$ is proper, and
\item the fiberwise gradient $\nabla  f^x:M\times \R^{1+n}\to \R^{1+n}$ is transverse to $0$.
\end{enumerate}
\end{definition}
The second condition implies that the fiberwise singular set 
\[\Sigma_f=\{(x;w,v), \nabla f^x (w,v)=0\}\]
is a smooth submanifold and that the map $\Sigma_f \to J^1 M=T^*M \times \R$ given by
$$(x;w,v)\mapsto \left(x, \frac{\partial f}{\partial x}(x;w,v)dx, f(x;w,v)\right)$$
is a \emph{Legendrian immersion}.

Following \cite{ACGK}, we define a stabilization operation $f\oplus_b q$ where $f$ is linear at infinity, $q:M\times \R^m\to \R$ is a fiberwise orthogonal quadratic form and $b:M\to (0,\infty)$ is a smooth function:
\[(f\oplus_b q) (x;w,v,u)=w+g(x;v)+\chi_m(b(x)^{-1} u)\epsilon(x;w,v)+q(u).\]
 This formula involves appropriate cut-off functions $\chi_m:\R^m\to [0,1]$ which satisfy
 \begin{align*}
   \chi_{m+m'}(u,u')& =\chi_m(u)\chi_{m'}(u'), \quad (u,u')\in\R^m\times \R^{m'} \\
\|\nabla \chi_m(u)\| & <\|u\|, \quad u\in \R^m\setminus\{0\}.
 \end{align*}
It is readily checked that $f\oplus_b q$ is still linear at infinity. Moreover if 
\[|\epsilon(x;w,v)|\leq b(x)\]
then
$\Sigma_{f\oplus_b q}=\Sigma_f\times\{0\}$ and the Legendrian immersion obtained from $f$
and $f\oplus_b q$ agree.

Observe now that if $\Sigma_f$ is disjoint from the zero locus of $f$, then $\{ f \leq 0 \} = \bigcup_x\{f^x \leq 0\}$
defines a subbundle of the trivial bundle $M\times \R^{1+n}$. As in \cite{ACGK} we will work with a subclass
where the topology of the fiber of this subbundle is rather simple: each fiber will be obtained from a halfspace by a single
trivial handle attachment.

This is made precise in the next definition where we use an auxiliary function $D:\R\to \R$ of the form $D(w)=w+\epsilon(w)$, with $\epsilon$ compactly supported (and $|\epsilon|\leq 4$),
such that $D$ has two critical points, a negative minimum and a positive maximum (it looks like $w^3-w$ but equals $w$ at infinity).
The choice of such a function is irrelevant as any two such functions are homotopic in this class.
\begin{definition}
A generating function linear at infinity $f$ is of \emph{tube type} if for each $x \in M$,
$f^x$ is transverse to $0$ and there exists $q\in \QQ$ such that $f^x$ is homotopic to $D\oplus_4 q$
among functions which are linear at infinity and are transverse to $0$. 
\end{definition}

\begin{definition}
Let $M$ be a smooth manifold. A \emph{twisted generating function linear at infinity} consists of a directed open cover $(M_i)_{i\in I}$ and data $(n_i,b,f_i,q_{ij})$ relative to this cover, satisfying the following properties:
\begin{enumerate}
\item for all $i$, $f_i : M_i\times \R_w \times \R^{n_i}_{v_i}\to \R$ is a generating function linear at infinity over $M_i$, written $f_i=w+\epsilon_i+g_i$ as before,
\item for all $i<j$, $q_{ij}:M_{ij}\times \R^{n_j-n_i}\to \R$ is a fiberwise orthogonal quadratic form, $b$ is a positive constant, and
for all $x \in M_{ij}=M_i\cap M_j$, we have
\begin{align}
g_j(x;v_j) & =g_i(x;v_i) + q_{ij}(x;v_{ij}) \\
  \epsilon_j(x;w,v_i,v_{ij})& =\chi_{n_j-n_i}(b^{-1}v_{ij})\epsilon_i(x;w,v_i),
\end{align}
where we decompose
\begin{equation}
  \label{eq:1}
  v_j=(v_i,v_{ij}) \in \R^{n_j} = \R^{n_i} \times \R^{n_j-n_i},
\end{equation}
\item for all $i$, $x\in M_i$, $(w,v_i)\in \R^{1+n_i}$, $|\epsilon_i(x;w,v_i)|\leq b$, and
\item for all $i<j<k$, $q_{ij}\oplus q_{jk}=q_{ik}$ over $M_{ijk}=M_i\cap M_j \cap M_k$.
\end{enumerate}
\end{definition}

The last condition is actually implied by the second one but we include it for the sake of clarity.

\begin{remark}\label{rem:legtwisted}
If we had $f_j=f_i\oplus q_{ij}$ then it would be clear that the fiberwise singular sets glue into a manifold
with a Legendrian immersion $\bigcup_i \Sigma_{f_i} \to J^1 M$. The cut-off functions $\chi_{n_j-n_i}$ might ruin this but,
as previously mentioned (and shown in \cite{ACGK}), the bound $b$ ensures that this still holds with the above definition.
\end{remark}

\begin{definition}
A twisted generating function linear at infinity is of \emph{tube type} if each function $f_j$ is of tube type.
\end{definition}

\begin{definition}
Let $L \subset J^1M$ be a Legendrian submanifold. We say that $L$ is generated by a twisted generating
function linear at infinity of tube type $(n_i,b,f_i,q_{ij})$ if the fiberwise singular sets of negative value $\Sigma_{f_j}\cap \{f_j <0\}$  define a Legendrian embedding with image $L$, i.e. on each $M_i$ we have that $f_i|_{\{f_i<0\}}$ is a generating function for $L \cap J^1M_i$ in the usual sense.
\end{definition}

The main result of \cite{ACGK} was the following:

\begin{theorem}\label{thm:acgk}
Let $M$ be a closed connected manifold and $L \subset T^*M$ a closed exact Lagrangian submanifold.
Then $L$ admits a twisted generating function linear at infinity of tube type. \qed
\end{theorem}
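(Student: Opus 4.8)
The plan is to deduce the statement from Guillermou's sheaf quantization theorem for exact Lagrangians, converting the geometric problem into a sheaf-theoretic one, solving it over a good cover via the generating-function/sheaf dictionary, and patching the local solutions into a twisted object. Concretely, exactness of $L \subset T^*M$ lets one lift $L$ to a Legendrian $\hat L \subset J^1 M = T^*M \times \bR$ using a primitive of the restricted Liouville form, and Guillermou's theorem then yields a sheaf $F$ on $M \times \bR_t$ whose reduced microsupport is the conification of $\hat L$ together with the zero section, which equals the constant sheaf $\bZ_M$ for $t \ll 0$ and vanishes for $t \gg 0$, and which is simple of rank one along $\hat L$. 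I would emphasise that the ``constant-to-zero'' profile and the simplicity of $F$ are exactly the sheaf-theoretic shadow of the tube-type condition; they are available precisely because $\hat L \to M$ is a simple homotopy equivalence by earlier work, leaving no room for extra local systems or extra sheets.

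First, then, I would fix a finite totally ordered cover $(M_i)_{i\in I}$ of $M$ by contractible opens with contractible multiple intersections. Over each $M_i$ I would invoke the correspondence between sheaves on $M_i \times \bR$ with a prescribed simple microsupport and generating functions linear at infinity (Chaperon--Laudenbach--Sikorav style existence, in the refined form used by Guillermou) to realise $F|_{M_i \times \bR}$ by a function $f_i = w + \epsilon_i + g_i$ generating $\hat L \cap J^1 M_i$, and then check it can be arranged to be of tube type. That last point is a pointwise assertion over $x \in M_i$: the stalk $F_x$ is, up to shift, the constant sheaf on $\bR$ truncated past a single wall, which forces the front of $\hat L$ over $x$ into the normal form of $D \oplus_4 q$ for a suitable orthogonal quadratic form $q$.

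Next I would compare the local data on overlaps. On $M_{ij}$ both $f_i$ and $f_j$ quantise the isomorphic sheaves $F|_{M_{ij} \times \bR}$, so a parametrised version of the uniqueness theorem for generating functions linear at infinity should supply, after a fibrewise stabilisation, a fibrewise diffeomorphism $\Phi_{ij}$ and a fibrewise orthogonal quadratic form $q_{ij}$ with $f_j \circ \Phi_{ij}$ equal to the stabilisation of $f_i$ by $q_{ij}$, the error being controlled by a single constant $b$ as in the definition. Composing each $f_i$ with the $\Phi_{ij}$ along the cover moves all the transition data into the $q_{ij}$, which satisfy $q_{ij} \oplus q_{jk} = q_{ik}$ on triple overlaps by functoriality of stabilisation; after a final normalisation of the cut-offs $\chi_{n_j-n_i}$ and of $b$ this produces the required twisted generating function linear at infinity of tube type.

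The hard part will be this last comparison step. One must upgrade the classical uniqueness of generating functions to a parametrised statement in which the difference of two local generating functions is realised, in a controlled fashion, by a fibrewise diffeomorphism composed with stabilisation by a genuinely \emph{varying} orthogonal quadratic form, and in which the resulting forms obey the cocycle identity on the nose. There is no reason the $q_{ij}$ can be trivialised globally---that failure is exactly the twisting---so the bookkeeping is unavoidable; the real content is checking that the obstruction lands in precisely the data the twisted-generating-function formalism is built to absorb, all while keeping the tube-type normal form fibrewise and the bound $|\epsilon_i| \le b$ needed for the Legendrian gluing of Remark~\ref{rem:legtwisted}.
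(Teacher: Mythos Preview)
The paper does not prove this theorem at all: it is quoted verbatim as ``the main result of \cite{ACGK}'' and closed with a \qed, so there is no proof here to compare against. Your task was really to sketch the argument of \cite{ACGK}, and your outline is in the right spirit---that paper does start from Guillermou's sheaf quantization and ultimately produces the twisted generating function---but several of the steps you treat as black boxes are in fact the substantial content of that paper.

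The main gap is the ``sheaf/generating-function dictionary over each $M_i$'' you invoke. There is no off-the-shelf theorem that takes a simple sheaf on $M_i\times\R$ with prescribed microsupport and hands back a generating function linear at infinity, even over a contractible base; the classical Chaperon--Laudenbach--Sikorav existence theorem produces generating functions for Legendrians \emph{Hamiltonian isotopic to the zero section}, which is exactly what one does not know for a nearby Lagrangian restricted over $M_i$. In \cite{ACGK} the passage from the sheaf to local generating functions is the core construction and occupies most of the paper; it does not go via a local uniqueness/existence dictionary but rather by building a specific model for the sheaf out of generating-function data. Similarly, your overlap step appeals to ``a parametrised version of the uniqueness theorem'' to extract orthogonal quadratic forms $q_{ij}$ satisfying the cocycle identity on the nose: the Viterbo--Th\'eret uniqueness theorem only gives uniqueness up to stabilisation and fibred diffeomorphism, and upgrading this to a strict $\QQ$-valued cocycle (rather than something valued in the full diffeomorphism group of the fibre) is precisely the ``twisting'' problem the formalism of \cite{ACGK} is designed to solve. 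Your last paragraph correctly identifies this as the hard part, but the sketch does not indicate how it is overcome.
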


For the purpose of the present article it will be more convenient to use a different version of generating functions, which we describe next.

\subsection{Almost quadratic twisted generating functions}
\label{sec:almost-quadr-twist}

\begin{definition} 
 A \emph{fiberwise orthogonal quadratic function} on $M$ is a smooth function
\begin{equation}
    q:M\times \R^n\to \R
\end{equation}
such that for each $x \in M$ the function $q^x(u)=q(x,u)$ is in $Q_n$.
\end{definition}

\begin{definition} \label{def:non-deg-quadratic}
We say that a $C^1$ function $G:\bR^n \to \bR$ is a {\em non-degenerate \quadfunc} if 
\begin{enumerate}
\item $G$ is homogeneous of degree 2, i.e. for all $v\in \R^n$ and $\lambda \geq 0$, $G(\lambda v)=\lambda^2 G(v)$, and
\item $\nabla G(v) \neq 0$ for all $x\in M$ and $v \in \R^n\setminus\{0\}$.
\end{enumerate}
\end{definition}

\begin{definition}\label{def:quadfun}
A \emph{fiberwise non-degenerate \quadfunc} on $M$ is a $C^1$-function $$G:M \times \R^n \to \R$$ such that for each $x \in M$ the function $G^x(u)=G(x,u)$ is a non-degenerate \quadfunc.
\end{definition}

\begin{remark} Beware that we use the wording quadratic function as a shorthand for a function which is merely $\R_+$-homogeneous of degree $2$. A quadratic function need not be a quadratic form. Note that a quadratic form satisfies the second condition of Definition~\ref{def:quadfun} if and only if it is non-degenerate, hence the terminology.  \end{remark}

\begin{remark}
 Since $G_x$ is quadratic, asking that $\nabla G_x(v) \neq 0$ for all $v \neq 0$ is equivalent to asking that the bound $\| \nabla G_x(v) \| \geq a(x) \| v\|$ hold for some continuous function $a:M \to (0,\infty)$. This is also equivalent to asking that $G_x$ be transverse to $0$ on $\R^n \setminus \{ 0 \}$, since $dG_x$ is non-zero along the radial direction away from $G_x^{-1}(0)$.
\end{remark}

\begin{definition}\label{def:almostquadfun}
A \emph{fiberwise almost quadratic function} is a smooth function $$F:M\times \R^n\to \R$$ of the form $F=G+E$
where $G$ is a fiberwise non-degenerate quadratic function and $E:M\times \R^n\to \R$ satisfies
\[\|\nabla E^x\|\leq c(x)\]
for some continuous function $c:M\to (0,+\infty)$ (we may take $c$ to be a positive constant if $M$ is compact).
\end{definition}

\begin{remark} \label{def:quadratic_function_determined_by_almost} The fiberwise non-degenerate quadratic function $G$ is determined by $F$, indeed 
\[G^x(v) = \lim_{ \lambda \to +\infty} \frac{G^x(\lambda v) }{\lambda^2} = \lim_{ \lambda \to +\infty} \frac{ F^x( \lambda v)}{\lambda^2 }.\]
\end{remark}

\begin{definition}
An \emph{almost quadratic generating function} over $M$ is a fiberwise almost quadratic function
$F:M \times \bR^n \to \R$ such that the fiberwise gradient $\nabla F^x:M\times \R^n\to \R^n$ is transverse to $0$.
\end{definition}

\begin{definition}
An almost quadratic generating function is of \emph{tube type} if, 
for all $x \in M$,  its associated non-degenerate quadratic function $G^x$ is homotopic to a non-degenerate quadratic form in the space of non-degenerate quadratic functions.
\end{definition}

\begin{remark}
The generating functions quadratic at infinity that are prominent in generating function theory, are particular cases of tube type almost quadratic generating functions
$F$ where the associated function $G$ is a non-degenerate quadratic \emph{form}.
In Waldhausen's theory (see \cite{W82}), this corresponds to the inclusion of the space $BO_n$ of rigid tubes in the space $T_n$ of all tubes.
\end{remark}

An advantage of the condition of being almost quadratic over being linear at infinity is that it is stable under direct sum with a quadratic form.
This removes the need for cut-off functions in the following definition.

\begin{definition}
An \emph{almost quadratic twisted generating function} over $M$ is given
by a directed open cover $(M_i)_{i\in I}$ of $M$ and data $(n_i,F_i,q_{ij})$ where
\begin{enumerate}
\item for all $i$, $F_i:M_i\times \R^{n_i} \to \R$ is an almost quadratic generating function over $M_i$,
\item for all $i<j$, $q_{ij}:M_{ij}\times \R^{n_j-n_i} \to \R$ are fiberwise orthogonal quadratic forms,
\item for all $i<j$, $F_j=F_i \oplus q_{ij}$ over $M_{ij}$, and
\item for all $i<j<k$, $q_{ij}\oplus q_{jk}=q_{ik}$ over $M_{ijk}$.
\end{enumerate}
\end{definition}

An almost quadratic twisted generating function over $M$ gives rise to a Legendrian immersion in $J^1 M$, see Remark~\ref{rem:legtwisted}.

\begin{definition}
An almost quadratic twisted generating function is of tube type if each $F_i$ is of tube type over $M_i$.
\end{definition}

The next two subsections describe an explicit procedure to transform a (twisted) generating function linear at infinity
into a (twisted) almost quadratic generating function, at the cost of adding one more variable.
In view of Theorem~\ref{thm:acgk}, this will lead to the existence of an almost quadratic twisted generating function
for any nearby Lagrangian (or rather for a Legendrian isotopic to it), see Corollary~\ref{cor:existence-quad-gf}. The reader who is interested in our applications, and is willing to accept this equivalence between linear and quadratic generatic functions, may skip ahead to Section \ref{subsec:twistedder}.

\subsection{From linear to quadratic: untwisted case}

\begin{proposition} \label{prop:quadratic_generating_function}
Let $f$ be a generating function linear at infinity over $M$ for a Legendrian submanifold $L \subset J^1M$. For any open subset $M' \subset M$ with compact closure in $M$, there exists an almost quadratic generating function $F$ over $M'$ which generates
a Legendrian submanifold $L' \subset J^1M'$ which is Legendrian isotopic to $L \cap J^1M'$.
Moreover if $f$ is of tube type then we can assume that $F$ is also of tube type.
\end{proposition}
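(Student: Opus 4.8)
The plan is to convert the linear-at-infinity generating function $f$ into an almost quadratic one by a fiberwise change of the $w$-variable that turns the linear behavior $f \sim w + g(x;v)$ into a homogeneous degree-$2$ behavior, at the cost of adding a single extra variable. Concretely, recall that $f(x;w,v) = w + g(x;v) + \varepsilon(x;w,v)$ with $\supp(\varepsilon)\to M$ proper and, by the tube type hypothesis, $g$ is (fiberwise) a nondegenerate quadratic form up to a compactly supported perturbation; moreover the auxiliary function $D(w)=w+\epsilon(w)$ has exactly two critical points. The first step is to observe that after shrinking to $M'$ (so that everything is proper over the compact closure) one may write $f$ as $D \oplus_4 q$-like data on a suitable region, and then replace the variable $w\in\R$ by two new variables $(s,t)\in\R^2$ via a map that sends the graph of $D$ to (a perturbation of) the standard saddle $st$; this is the classical trick that a single trivial handle attachment in dimension one becomes, after one stabilization, a nondegenerate quadratic form of signature $(1,1)$ plus a compactly supported error. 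The net effect is to trade ``$w + (\text{compactly supported }\epsilon)$'' for ``$(\text{nondegenerate quadratic form in }(s,t)) + (\text{gradient-bounded }E)$'', which is exactly the shape required by Definition~\ref{def:almostquadfun}.

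In more detail, I would first reparametrize so that the fiber over $x$ has the normal form $D\oplus_4 q_x$ with $q_x$ a fiberwise orthogonal quadratic form (using the tube type definition, which provides a homotopy of $f^x$ to $D\oplus_4 q$, and a parametrized version of this to get a smooth family; here one must be a little careful and only claim a Legendrian isotopy of the generated Legendrian rather than an equality of generating functions). Then I would define $F$ on $M'\times\R^{n+1}$ by a formula of the form
\[
F(x;s,t,v) = \Phi(s,t) + g(x;v) + \widetilde\varepsilon(x;s,t,v),
\]
where $\Phi(s,t)$ is a fixed nondegenerate quadratic function of signature $(1,1)$ (a genuine homogeneous degree-$2$ function, e.g. built from $st$ after rotating coordinates to get eigenvalues $\pm\tfrac12$), and $\widetilde\varepsilon$ is the transported version of $\chi(\cdot)\varepsilon$ under the substitution $w \rightsquigarrow (s,t)$; one checks $\|\nabla\widetilde\varepsilon^x\|$ is bounded because $\varepsilon$ had compact support in $w$ and the substitution is proper there, and $g$ is literally a nondegenerate quadratic form away from a compact set so its contribution to the quadratic part $G^x$ is exactly $\Phi \oplus g_\infty$, still nondegenerate and still of quadratic-form type. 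Transversality of $\nabla F^x$ to $0$ is inherited from transversality of $\nabla f^x$ to $0$ together with nondegeneracy of $\Phi$ on the new variables, and a Morse-theoretic/critical-point count shows the generated Legendrian $L'$ is Legendrian isotopic to $L\cap J^1M'$ (the extra variable adds a trivial $\R^{1,1}$-direction with no new critical points, and the reparametrization of $w$ only changes $f$ through an isotopy). The tube type statement for $F$ is then immediate: by construction $G^x = \Phi \oplus (\text{form})$ is a nondegenerate quadratic \emph{form}, hence certainly homotopic to one through nondegenerate quadratic functions.

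The main obstacle I expect is the bookkeeping in the substitution $w \rightsquigarrow (s,t)$: one needs a fixed smooth proper map (or embedding of graphs) that simultaneously (i) is homogeneous of degree $2$ outside a compact set in the $(s,t)$-plane so that $G^x$ is genuinely quadratic there, (ii) has the property that $\Phi$ restricted near the critical locus reproduces the two critical points of $D$ with the correct indices after one stabilization, and (iii) keeps the error term gradient-bounded uniformly over the compact base $\overline{M'}$. This is essentially the one-variable case of the standard equivalence between generating functions linear at infinity and generating functions quadratic at infinity, but making it \emph{fiberwise} and compatible with the later twisted/monoid structure requires choosing all auxiliary data (the function $\Phi$, the cutoffs, the homotopy to normal form) independently of $x$, which is why the hypothesis of compact closure $\overline{M'}\subset M$ is used. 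Once this model substitution is fixed once and for all, everything else is a routine check against the definitions in Section~\ref{sec:almost-quadr-twist}.
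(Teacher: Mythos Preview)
There is a genuine gap. Your plan hinges on two claims that are not supplied by the hypotheses: (i) that the tube-type condition lets you put $f^x$ into the normal form $D\oplus_4 q_x$ \emph{as a smooth family in $x$}, and (ii) that the function $g(x;v)$ in the decomposition $f=w+g+\varepsilon$ is ``literally a nondegenerate quadratic form away from a compact set''. Neither holds. In the definition of a generating function linear at infinity, $g$ is an arbitrary smooth function of $(x,v)$ with no growth or nondegeneracy condition whatsoever; the only compact-support hypothesis is on $\varepsilon$, and it is in the $(w,v)$-direction, not on $g$. The tube-type hypothesis gives, for each $x$ separately, a homotopy of $f^x$ to $D\oplus_4 q$ through functions transverse to $0$; it does not give a family of such homotopies, nor does it constrain the shape of $g$. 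Moreover, the first sentence of the proposition does not assume tube type at all, so a proof that begins by invoking it cannot establish the main clause.

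The paper's actual construction bypasses both problems by a different mechanism: it adds one extra variable $t$, sets $r=\sqrt{t^2+w^2+|v|^2}$, and \emph{radially homogenizes} the whole function, defining (for $t>0$) essentially $G_a(t,w,v)=\tfrac{r^2}{a}\, f\!\left(\tfrac{aw}{r},\tfrac{cv}{r}\right)$, which is automatically quadratic in $(t,w,v)$ regardless of the form of $g$. The price is a singularity at the origin, removed by cutoffs $\rho,\chi,h$; the resulting $F_a$ generates the image of $L$ under an explicit contactomorphism, hence a Legendrian isotopic copy. The tube-type clause is then checked \emph{a posteriori} by analysing the homogenized model of $D\oplus_4 q$. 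So your substitution $w\rightsquigarrow(s,t)$ acting only on the $w$-variable is not the right move; the key missing idea is to homogenize \emph{all} the fiber variables simultaneously via the radial rescaling.
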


\begin{remark}
If $M$ is closed we may choose $M'=M$.
\end{remark}

The proof will proceed by an explicit construction, where we first produce from $f$ a fiberwise non-degenerate quadratic function (hence is badly singular at the origin), then smooth this singularity out to obtain the desired almost quadratic generating function. Let $M'$ be an open subset of $M$ and $K$ a compact subset of $M$ such that $M'\subset K$.

As in \cite{ACGK}, we start by expressing the linear at infinity function as a sum
\[f(x;w,v)=w+g(x;v)+\epsilon(x;w,v)\]
where the projection $\supp(\epsilon)\to M$ is proper. 
We fix $c>0$ large enough so that $\supp(\epsilon^x)\subset \{w^2+|v|^2< c^2\}$ for all $x\in K$.

For $a\geq c$ (to be chosen sufficiently large later) we define a function 
$$G_a:M' \times \R^2_{t,w}\times \R_{v}^n\to \R,$$
 which vanishes at the origin, and is given away from it by the formula
\begin{equation} \label{eq:piecewise_quadratic}
    G_a(x;t,w,v)=\begin{cases}
\frac{r^2}{a}\left(\frac{aw}{r}+\epsilon\left(x;\frac{aw}{r},\frac{cv}{r}\right) + g\left(x;\frac{cv}{r}\right)\right)  & \text{ if } 0 \leq t \\
 \frac{r^2}{a}\left(\frac{aw}{r} + g\left(x;\frac{cv}{r}\right)\right)  &\text{ if } t <  0\end{cases}
\end{equation}
where $r=\sqrt{t^2+w^2+|v|^2}$. This formula immediately shows that the restriction of $G_a$ to the fiber over $x \in M'$ is quadratic.

Away from the origin, it is straightforward to check that Expression \eqref{eq:piecewise_quadratic} is smooth across the potential domain of discontinuity, since the condition $a \geq c$ ensures that, whenever $t=0$ and hence $r=\sqrt{w^2+|v|^2} $, the  vector $(\frac{aw}{r},\frac{cv}{r})$ lies outside the ball of radius $c$, so that the term $\epsilon\left(x;\frac{aw}{r},\frac{cv}{r}\right)$ vanishes.

Our next goal is to ensure that the zero-level set of $G_a$ is a smooth manifold away from the origin:
\begin{lemma}\label{lem:propertyFa}
  The restriction of $G^x_a$ to $\R^{2+n}\setminus\{0\}$ is transverse to $0$ if and only if the zero-level set of $f^x$ is transverse to the hypersurface $ \frac{w^2}{a^2}+\frac{v^2}{c^2} = 1$.
\end{lemma}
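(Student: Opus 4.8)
The statement is a pointwise claim about a single fiber, so I will drop the dependence on $x$ and work with $f = f^x$ a function on $\R^{1+n}$ which is linear at infinity and transverse to $0$, and $G_a = G_a^x$ the associated function on $\R^{2+n}$. The guiding principle is that $G_a$ is (away from the origin) obtained from $f$ by the ``conification'' procedure: rescale so that $(t,w,v)$ of norm $r$ is sent to the point on the ellipsoid $\tfrac{w^2}{a^2}+\tfrac{v^2}{c^2}=1$ in the half-space $t\ge 0$, or to the graph of the pure linear part $w + g$ for $t<0$, and then multiply the value by $r^2/a$. I will first record that $G_a$ restricted to the open half-space $\{t>0\}$ is, up to the positive radial factor $r^2/a$ and the linear change of fiber coordinates $(t,w,v)\mapsto(\tfrac{aw}{r},\tfrac{cv}{r})$ together with the sphere coordinate, essentially $f$ pulled back under the radial projection onto the ellipsoid; and on $\{t<0\}$ it is the nondegenerate quadratic function built from $w+g(v)$, which is transverse to $0$ on $\R^{2+n}\setminus\{0\}$ unconditionally (its zero set is a smooth cone on a sphere bundle). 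The only place where transversality of the zero set of $G_a$ can fail is therefore along the ``equator'' $\{t=0\}$, where the two pieces are glued, together with the interior of the $\{t>0\}$ piece.

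\textbf{Key steps.} (1) Compute $\nabla G_a^x$ explicitly on $\{t>0\}$ using the chain rule: writing $P(t,w,v)=(\tfrac{aw}{r},\tfrac{cv}{r})$ for the (nonlinear) radial projection to the ellipsoid, one has $G_a = \tfrac{r^2}{a}\,(f\circ P)$ on this region up to the identification of the first coordinate. Since $r^2/a>0$, the zero set of $G_a^x$ in $\{t>0\}$ agrees with the zero set of $f\circ P$, i.e. with the preimage under $P$ of $\{f=0\}$; and $P$ restricted to any sphere $\{r=\text{const}\}\cap\{t\ge 0\}$ is (essentially) a diffeomorphism onto (a hemisphere of) the ellipsoid $\{\tfrac{w^2}{a^2}+\tfrac{v^2}{c^2}=1\}$, blown up radially. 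Transversality of $\{G_a^x=0\}$ to the radial spheres is automatic because $G_a^x$ is $2$-homogeneous (so $dG_a^x$ is nonzero in the radial direction off the zero set, exactly as in the Remark after Definition~\ref{def:quadfun}), hence $\{G_a^x=0\}\cap\{t>0\}$ is transversely cut out iff for each $r$ the slice $\{f\circ P = 0\}\cap\{r=\text{const}\}$ is a transversely cut out hypersurface of that sphere, which (transporting through $P$) is exactly the condition that $\{f=0\}$ meets the ellipsoid $\{\tfrac{w^2}{a^2}+\tfrac{v^2}{c^2}=1\}$ transversely. (2) Handle the equator $\{t=0\}$: there $\epsilon$ vanishes (as already noted, since $(\tfrac{aw}{r},\tfrac{cv}{r})$ lies outside the ball of radius $c$), so near the equator $G_a^x$ agrees with the pure quadratic function coming from $w+g(v)$ on both sides; its gradient there is computed directly and one checks it is nonzero off the origin and that the zero set crosses $\{t=0\}$ transversely — so the equator contributes no failure of transversality and the two one-sided conditions match up. Conversely, (3) for the ``only if'' direction, observe that if $\{f=0\}$ fails to be transverse to the ellipsoid at some point $q$, then pulling back through $P$ and radially scaling produces a point of $\{G_a^x=0\}\cap\{t>0\}$ where $dG_a^x$ vanishes in the directions tangent to the ellipsoid and (by $2$-homogeneity, since $G_a^x(q)=0$) also in the radial direction, hence $dG_a^x=0$ there, so $G_a^x$ is not transverse to $0$.

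\textbf{Main obstacle.} The routine but slightly delicate part is step (1): making precise the claim that $\{G_a^x=0\}$ is transverse to $0$ \emph{iff} each radial slice is transverse inside its sphere, and then identifying that slice-wise condition, after transporting along the nonlinear radial projection $P$ to the ellipsoid, with transversality of $\{f=0\}$ to the ellipsoid $\{\tfrac{w^2}{a^2}+\tfrac{v^2}{c^2}=1\}$ in $\R^{1+n}$. One must be careful that $P$ is not linear — it is radial projection onto an ellipsoid — so the chain-rule computation of $\nabla G_a^x$ has a radial term (killed on the zero set by homogeneity) and a spherical term whose vanishing is governed by $d(f\circ P)$ along the sphere; pushing this forward through $P$ is where the ellipsoid $\{\tfrac{w^2}{a^2}+\tfrac{v^2}{c^2}=1\}$, rather than a round sphere, enters. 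Once the bookkeeping for $P$ is set up cleanly the equivalence is immediate, and the equator and the $\{t<0\}$ region are easy. I would phrase the argument so that the two implications fall out of the single identity $\nabla G_a^x = 0 \iff \big(\{f=0\}\text{ not transverse to the ellipsoid at the corresponding point}\big)$ on the locus $\{G_a^x = 0, t > 0\}$, together with the unconditional transversality on $\{t\le 0\}$.
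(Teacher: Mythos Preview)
Your decomposition into $\{t>0\}$, $\{t=0\}$, $\{t<0\}$ is right, but you have swapped the roles of the open half-space and the equator, and this is a genuine error rather than a bookkeeping slip.

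On $\{t>0\}$: the map $(t,w,v)\mapsto(s,w',v')=(r^2,\tfrac{aw}{r},\tfrac{cv}{r})$ is a diffeomorphism onto $(0,\infty)\times\{\tfrac{w'^2}{a^2}+\tfrac{v'^2}{c^2}<1\}$, the \emph{interior} of the solid ellipsoid, not onto the ellipsoid hypersurface. In these coordinates $G_a=\tfrac{s}{a}f(w',v')$, which is transverse to $0$ simply because $f$ is transverse to $0$ (this is part of the standing hypothesis on a generating function linear at infinity). So the region $\{t>0\}$ is unconditional and has nothing to do with the ellipsoid hypersurface; your step~(1) claims the opposite. The same coordinate change shows $\{t<0\}$ is unconditional, since there $G_a=\tfrac{s}{a}(w'+g(v'))$ and $\partial_{w'}$ is nonzero.

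The content of the lemma lives entirely at $\{t=0\}$, which you dismiss in step~(2). The key point you miss is that $\partial_t G_a$ vanishes identically along $\{t=0\}$: indeed $\partial_t s=2t$, $\partial_t w'=-\tfrac{aw t}{r^3}$, $\partial_t v'=-\tfrac{cvt}{r^3}$ all vanish at $t=0$. Combined with Euler's identity (radial derivative vanishes on the zero set), this forces any nonzero component of $\nabla G_a$ at a point of $\{G_a=0\}\cap\{t=0\}$ to come from the restriction of $G_a$ to $\{t=0, r=1\}$. Under the coordinate change this restriction is exactly $\tfrac{1}{a}f$ restricted to the ellipsoid $\{\tfrac{w'^2}{a^2}+\tfrac{v'^2}{c^2}=1\}$ (recall $\epsilon$ vanishes there). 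Hence $\nabla G_a$ vanishes at such a point precisely when $\{f=0\}$ is tangent to the ellipsoid. Your assertion that ``one checks it is nonzero off the origin'' at the equator is therefore false in general: it fails exactly when the ellipsoid transversality condition fails.
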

\begin{proof}
We first prove transversality in the regions $\{0 < t\}$ and $\{t > 0\} $, which can both be equipped with coordinates 
\begin{equation} \label{eq:change_coordinates}
    \left(s=r^2,w'=\frac{aw}{r},v'=\frac{cv}{r}\right)
\end{equation}
which define a diffeomorphism between these regions and the product of $(0,+\infty)$ with the interior of the ellipsoid
\begin{equation}
    (0,+\infty)\times \left\{\frac{w'^2}{a^2}+\frac{v'^2}{c^2} < 1\right\}.
\end{equation}
In these coordinates, the function $G^x_a$ reads simply:
\begin{equation} \label{eq:expressing_F_in_half_space}
  G^x_a(s,w',v') =
  \begin{cases}
    \frac{s}{a}f^x(w',v') &   \textrm{ for } 0 \leq t \\
    \frac{s}{a} \left( w' + g^x(v') \right) &  \textrm{ for } t \leq 0 .
  \end{cases}
\end{equation}
The first expression vanishes transversely since $f^x$ does, and the second because the $w'$ derivative is non-trivial.

It thus remains to ensure transversality along the hypersurface $t=0$, and we can use the closure of either chart for this argument:  in the region $0 \leq t$ the function $G^x_a$ is obtained by pulling back the restriction of $ \frac{s}{a}f^x(w',v')$ to the product
\begin{equation}
    (0,+\infty)\times \left\{\frac{w'^2}{a^2}+\frac{v'^2}{c^2} \leq  1\right\}
\end{equation}
under the smooth map given by Equation \eqref{eq:change_coordinates}. On the boundary, the $t$-derivative of this map identically vanishes, so that the partial derivative $\partial G^x_a / \partial t$ is trivial. This implies that critical points of $G^x_a$ which lie in the hypersurface $t=0$ are exactly the critical points of the restriction of this function. This hypersurface is diffeomorphically identified by Equation \eqref{eq:change_coordinates} with the product of $(0,\infty)$ with the hypesurface $ \frac{w^2}{a^2}+\frac{v^2}{c^2} = 1$, and the expression for $G^x_a$ in Equation \eqref{eq:expressing_F_in_half_space} again identifies the critical points of $G^x_a$ with the product with $(0,\infty) $ of the restriction of the critical points of $f^x(w',v')$ to the boundary of the ellipsoid.
\end{proof}
The set of possible values of the constant $a$ for which the transversality property in the above Lemma holds is a priori disconnected; we distinguish a component as follows:
\begin{corollary}\label{cor: const}
There exists a constant $A>0$ such that the restriction of $G^x_a$ to $\R^{2+n}\setminus\{0\}$ is transverse to $0$ whenever $a>A$.
\end{corollary}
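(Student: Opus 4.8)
The plan is to upgrade Lemma~\ref{lem:propertyFa} from a fixed constant $a$ to a uniform statement valid for all large $a$. Fix $x \in K$. First I would identify the zero-level set $\{f^x = 0\}$ precisely: since $f^x(w,v) = w + g^x(v) + \epsilon^x(w,v)$ and $\supp(\epsilon^x) \subset \{w^2 + |v|^2 < c^2\}$, outside the ball of radius $c$ the function is exactly $w + g^x(v)$, whose zero set is the graph $w = -g^x(v)$. Since $g^x$ is a function on a fixed compact piece of $M$, it satisfies a uniform Lipschitz bound $|g^x(v)| \le C|v| + C$ and $|\nabla g^x(v)| \le C$ on the relevant range; hence the graph $w = -g^x(v)$ grows at most linearly in $|v|$.

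The key geometric observation is that the hypersurface $\Sigma_a = \{\frac{w^2}{a^2} + \frac{|v|^2}{c^2} = 1\}$ is a very eccentric ellipsoid that, as $a \to \infty$, stretches out in the $w$-direction while keeping the $v$-extent bounded by $c$. So I would argue that for $a$ large enough the intersection $\{f^x = 0\} \cap \Sigma_a$ occurs only in the region $|v|^2 < c^2$, where $\Sigma_a$ is a graph $w = \pm a\sqrt{1 - |v|^2/c^2}$ over the disk $\{|v| < c\}$, with $|w| \ge a\sqrt{1 - |v|^2/c^2}$ going to infinity. Indeed on $\{f^x = 0\}$ we have $|w| \le |g^x(v)| + |\epsilon^x(w,v)| \le C|v| + C + \sup|\epsilon^x|$, which is bounded whenever $|v|$ is bounded; matching this against the lower bound for $|w|$ on $\Sigma_a$ outside any fixed neighborhood of $\{|v| = c\}$ shows the intersection is squeezed into a neighborhood of the circle $\{w = 0,\ |v| = c\}$ — but there $\Sigma_a$ has $|w|$ large, a contradiction for $a$ large. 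Hence for $a > A$ the intersection is confined to the region where $\epsilon^x \equiv 0$ and both hypersurfaces are smooth graphs.

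In that region, transversality of $\{w = -g^x(v)\}$ to $\Sigma_a$ is immediate: the normal to $\{f^x = 0\}$ has a nonzero $w$-component (it is $(\partial_w f^x, \nabla_v f^x) = (1, \nabla g^x(v))$), while along $\Sigma_a$ in the region $|v| < c$ the tangent space contains the full $v$-directions and the surface is a graph over $v$; equivalently the two gradients $(1, \nabla g^x)$ and $(\frac{w}{a^2}, \frac{v}{c^2})$ are never parallel since $|w|/a^2 \to 0$ while the first has $w$-slope $1$, once $a$ is large. Applying Lemma~\ref{lem:propertyFa} then gives transversality of $G^x_a|_{\R^{2+n}\setminus\{0\}}$ to $0$ for every $x \in K$ and every $a > A(x)$. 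Finally I would make $A$ independent of $x$: all the estimates above (the Lipschitz constant $C$ for $g^x$, the sup of $|\epsilon^x|$, and the constant $c$) can be taken uniform over the compact set $K$ by Theorem~\ref{thm:acgk}'s setup and the properness of $\supp(\epsilon) \to M$, so the threshold $A$ obtained is a single constant working for all $x \in K \supset M'$.

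The main obstacle I anticipate is making the confinement argument in the second paragraph fully rigorous near the "equator" $\{|v| = c,\ w = 0\}$ of $\Sigma_a$: there the two competing bounds on $|w|$ (the linear-growth bound from $\{f^x=0\}$ and the lower bound from $\Sigma_a$) both degenerate, so one must be careful that the intersection doesn't leak out along a thin annulus where $1 - |v|^2/c^2$ is small. The clean way around this is to split into two cases — $|v| \le c(1-\delta)$, where $\Sigma_a$ forces $|w| \gtrsim a\sqrt{\delta}$, versus $c(1-\delta) \le |v| \le c$, where one uses instead that on $\Sigma_a$ necessarily $|v| \le c$ so $\epsilon^x$ vanishes anyway and the transversality computation applies directly regardless of $w$ — thereby never actually needing to control the intersection locus, only to know that wherever it occurs $\epsilon^x \equiv 0$ and the parallelism of gradients is obstructed by the $w$-slope. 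This reduces the whole corollary to the single uniform estimate $\sup_{x \in K} |\nabla g^x| < \infty$, which is immediate.
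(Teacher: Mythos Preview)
Your overall strategy—invoke Lemma~\ref{lem:propertyFa} and then verify that $\{f^x=0\}$ meets the ellipsoid $\Sigma_a$ transversally for large $a$—is exactly the paper's. But you take a long detour around a one-line observation and make an error on the way. The observation: since $a\ge c$, on $\Sigma_a$ one has $w^2+|v|^2 \ge c^2\bigl(\tfrac{w^2}{a^2}+\tfrac{|v|^2}{c^2}\bigr)=c^2$, so the \emph{entire} ellipsoid already lies outside $\supp(\epsilon^x)$ and $f^x=w+g^x(v)$ there. This makes your confinement argument unnecessary; and that argument is in fact wrong as stated—you squeeze the intersection toward the equator $\{w=0,\,|v|=c\}$ and then claim ``there $\Sigma_a$ has $|w|$ large, a contradiction'', but near the equator $|w|$ is \emph{small}, so there is no contradiction (the intersection genuinely lives there). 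Your proposed fix (``$|v|\le c$ so $\epsilon^x$ vanishes anyway'') cites the wrong reason as well.

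Once $f^x=w+g^x(v)$ on $\Sigma_a$, the paper finishes in one stroke: rescale $(w,v)=(aw'',cv'')$ so that $\Sigma_a$ becomes the unit sphere and the problem becomes transversality of the zero set of $w''+\tfrac{1}{a}g^x(cv'')$ to that sphere; this function is $C^1$-close on $\{(w'')^2+|v''|^2\le 2\}$ to the coordinate function $w''$, whose zero set is manifestly transverse to the sphere, and transversality is open in $C^1$. Uniformity in $x\in K$ comes simply from compactness of $K$ (not from Theorem~\ref{thm:acgk}). Your direct gradient comparison can be completed, but as written it is not: to rule out $(1,\nabla g^x(v))\parallel(\tfrac{w}{a^2},\tfrac{v}{c^2})$ on $\Sigma_a\cap\{f^x=0\}$ you need not only that $|w|/a^2\to 0$ there (true, since $|w|=|g^x(v)|$ is bounded for $|v|\le c$) but also that $|v|$ stays bounded away from $0$—which does follow from the $\Sigma_a$ equation once $|w|$ is bounded and $a$ is large, but you never say so.
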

\begin{proof}
  Having assumed that $a$ and $c$ are sufficiently large, the hypersurface $ \frac{w^2}{a^2}+\frac{v^2}{c^2} = 1$ lies in the region where $f^x(w,v) = w + g(v)$. Changing coordinates $(aw'',cv'')=(w,v)$ reduces the problem so showing that the function $w'' + \frac{1}{a} g(cv'')$
  is transverse to the unit sphere whenever the constant $a$ is sufficiently large, but this is clear because this function is $C^1$-close to the coordinate function $w$ on $\{(v'')^2 + (w'')^2 \leq 2\}$.
\end{proof}
We conclude that the function $G^x_a$ has the desired behaviour at infinity. However, note that $G^x_a$ has
no critical points except for a degenerate critical point at the origin. We now proceed to
construct a function $F^x_a$ which equals $G^x_a$ at infinity and will satisfy $\crit(F^x_a)\simeq \{\crit f^x\}\cap\{f^x <0\}$.
More precisely, if $f$ generates a Legendrian $L$ over $M'$, then $F_a$ will generate a Legendrian $L_a$ over $M'$ that is Legendrian isotopic
to $L$ (by an explicit isotopy). For this construction we choose auxiliary smooth functions:
\begin{itemize}
\item $\rho:[0,+\infty)\to \R$ such that $\rho(r)=r$ for $r\geq 1$, $\rho(r)\geq r$, $0\leq \rho'\leq 1$ and $\rho(r)$ is constant for $r$ close to $0$.
\item $h:[0,+\infty)\to \R$ such that $0\leq h'\leq \frac{1}{3}$, $h'=\frac{1}{3}$ on $[1,2]$, $h''<0$ on $(2,3)$ and $h=0$ on $[3,+\infty)$.
\item $\chi:[0,+\infty)\to [0,1]$ such that $\chi=0$ on $[0,1]$, $\chi=1$ on $[2,+\infty)$ and $0\leq \chi'\leq 2$.
\end{itemize}

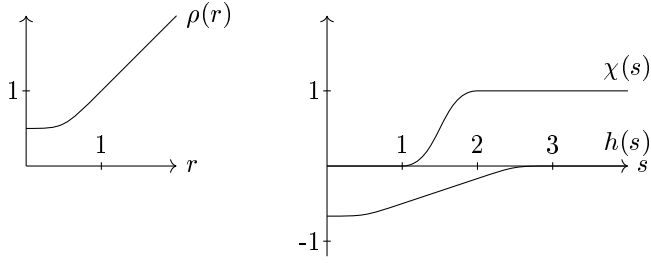
\begin{figure}[h]
  \centering
  \begin{tikzpicture}
     \draw[->] (0,0) -- (2,0) node[right] {$r$};
    \draw[->] (0,0) -- (0,2) node[right] {};
      \foreach \x in {1}
    \draw (\x,-0.05) -- (\x,0.05) node[above] {\x};
  \foreach \y in {1} 
    \draw (-0.05,\y) -- (0.05,\y) node[left] {\y};
     \draw[domain=1:2,smooth,variable=\x] plot ({\x},{\x}) node[right] {$\rho(r)$};
     \draw (0,1/2) ..controls (1/2,1/2) .. (1,1);
   
    \begin{scope}[shift={(4,0)}]
        \draw[->] (0,0) -- (4,0) node[right] {$s$};
    \draw[->] (0,-1.2) -- (0,2) node[right] {};
      \foreach \x in {1,2,3}
    \draw (\x,-0.05) -- (\x,0.05) node[above] {\x};
  \foreach \y in {-1,1} 
    \draw (-0.05,\y) -- (0.05,\y) node[left] {\y};
     \draw (0, -2/3) ..controls (.5,-2/3) ..  (1, -1/2) -- (2,-1/6) ..controls (2.5,0) .. (3,0) -- (4,0) node[above] {$h(s)$} ;
     \draw (0, 0) --  (1, 0) ..controls (1.5,0) and (1.5,1) ..  (2,1) -- (4,1) node[above] {$\chi(s)$} ; 
    \end{scope}
   
  \end{tikzpicture}
  \caption{Three auxiliary functions}
\label{fig:three_functions}
\end{figure}

We define $F_a:M' \times \R^2 \times \R^n \to \R$ by the formula
\[F_a(x;t,w,v)=\frac{\rho^2}{a}\left(\frac{aw}{\rho} + \chi(\rho^2)\epsilon\left(x;\frac{aw}{\rho},\frac{cv}{\rho}\right)\delta_{t>0}+g\left(x;\frac{cv}{\rho}\right)\right)+ h(\rho^2)\]
where $\delta_{t>0}$ is the characteristic function of the half space $t > 0 $ and $\rho$ is the function
\[\rho(r)=\rho\left(\sqrt{t^2+w^2+|v|^2}\right).\]

\begin{lemma} \label{lem:compact_support}
$F^x_a$ is smooth.
\end{lemma}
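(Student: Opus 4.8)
First I would rewrite the formula for $F^x_a$ as a sum of four terms,
\[ F^x_a(t,w,v) \;=\; \rho\, w \;+\; \frac{\rho^2}{a}\,\chi(\rho^2)\,\epsilon^x\!\Big(\tfrac{aw}{\rho},\tfrac{cv}{\rho}\Big)\,\delta_{t>0} \;+\; \frac{\rho^2}{a}\, g^x\!\Big(\tfrac{cv}{\rho}\Big) \;+\; h(\rho^2), \]
where $\rho$ abbreviates $\rho\big(\sqrt{t^2+w^2+|v|^2}\big)$. Since $\rho$ is \emph{constant} — equal to some $\rho_0\in(0,1]$, positive because $\rho$ is constant near $0$ and $\rho(r)\ge r$, and $\le 1$ because $\rho(1)=1$ and $\rho'\ge 0$ — on a neighborhood of $0$, while away from $0$ it is a smooth function of $\sqrt{t^2+w^2+|v|^2}$, it is a smooth, strictly positive function on all of $\R^{2+n}$. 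Hence the first, third and fourth terms are smooth, and the lemma reduces to showing that the middle term $\Psi\cdot\delta_{t>0}$ is smooth, where $\Psi:=\tfrac{\rho^2}{a}\chi(\rho^2)\epsilon^x(\tfrac{aw}{\rho},\tfrac{cv}{\rho})$ is itself smooth away from the origin and vanishes identically near it. So the only possible failure of smoothness of $\Psi\cdot\delta_{t>0}$ is along $\{t=0\}$.

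Next I would dispose of the easy cases. Away from $\{t=0\}$ the indicator $\delta_{t>0}$ is locally constant, so there is nothing to do; near the origin $\rho\equiv\rho_0\le 1$ forces $\chi(\rho^2)=\chi(\rho_0^2)=0$ (as $\chi\equiv 0$ on $[0,1]$), so $\Psi\equiv 0$ there. This leaves a single case: a point $(0,w_0,v_0)$ with $(w_0,v_0)\ne 0$, and the plan is to show that $\Psi$ vanishes on an \emph{entire neighborhood} of such a point — this is what is really needed, since mere pointwise vanishing of $\Psi$ on $\{t=0\}$ would not suffice to make $\Psi\cdot\delta_{t>0}$ smooth. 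To produce such a neighborhood I would estimate the set $\{\Psi\ne 0\}$. If $\Psi(t,w,v)\ne 0$ then $\chi(\rho^2)\ne 0$ forces $\rho^2>1$, hence $\rho>1$, hence (from the description of $\rho$) $r:=\sqrt{t^2+w^2+|v|^2}>1$ and $\rho=r$; moreover $\epsilon^x(\tfrac{aw}{\rho},\tfrac{cv}{\rho})\ne 0$. Here is the key point: since $\supp(\epsilon)\to M$ is proper, $\supp(\epsilon^x)$ is compact, and being contained in the open ball $\{w^2+|v|^2<c^2\}$ it is in fact contained in a \emph{smaller} closed ball $\{w^2+|v|^2\le c_0^2\}$ with $c_0<c$. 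Therefore $a^2w^2+c^2|v|^2=\rho^2\big((\tfrac{aw}{\rho})^2+(\tfrac{cv}{\rho})^2\big)\le c_0^2\rho^2$, and since $a\ge c$ this gives $c^2(w^2+|v|^2)\le c_0^2\rho^2=c_0^2(t^2+w^2+|v|^2)$, i.e.
\[ w^2+|v|^2 \;\le\; \frac{c_0^2}{c^2-c_0^2}\,t^2 \;=:\; \lambda^2\, t^2 . \]
Thus $\{\Psi\ne 0\}$, and hence its closure, lies in the closed cone $C=\{w^2+|v|^2\le\lambda^2 t^2\}$. The point $(0,w_0,v_0)$ satisfies $w_0^2+|v_0|^2>0=\lambda^2\cdot 0^2$, so it lies outside $C$ and has a neighborhood $U$ disjoint from $C$; on $U$ we get $\Psi\equiv 0$, so $F^x_a|_U$ is the sum of the three manifestly smooth terms. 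Together with the two easy cases this proves $F^x_a$ is smooth.

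The single delicate point, which I would take care to spell out, is the step just described: one cannot merely observe that the $\epsilon$-term vanishes on $\{t=0\}$ itself — one has to upgrade this to vanishing on an open set, and the strict inequality $c_0<c$, i.e. the properness hypothesis on $\supp(\epsilon)\to M$ (which makes the fiber supports compact), is exactly what supplies the positive slope $\lambda$ and hence the open complement of the cone. Everything else is routine.
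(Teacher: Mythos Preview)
Your proof is correct and follows the same overall strategy as the paper's: isolate the term $\chi(\rho^2)\,\epsilon^x(\tfrac{aw}{\rho},\tfrac{cv}{\rho})\,\delta_{t>0}$ as the only non-obviously smooth one, and show it vanishes on a full neighborhood of each point of $\{t=0\}$.

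The difference lies in how the neighborhood is produced. You use properness to find $c_0<c$ with $\supp(\epsilon^x)\subset\{w^2+|v|^2\le c_0^2\}$ and then derive a quantitative cone estimate $w^2+|v|^2\le\lambda^2 t^2$ on $\{\Psi\ne 0\}$. The paper's argument is shorter and avoids this step entirely: at any point with $t=0$ and $\rho=r\ge 1$ one has $r^2=w^2+|v|^2$, so
\[
\Big(\tfrac{aw}{\rho}\Big)^2+\Big|\tfrac{cv}{\rho}\Big|^2=\tfrac{a^2w^2+c^2|v|^2}{w^2+|v|^2}\ \ge\ \min(a^2,c^2)=c^2,
\]
which already places $(\tfrac{aw}{\rho},\tfrac{cv}{\rho})$ outside the open ball $\{w^2+|v|^2<c^2\}$, hence outside the closed set $\supp(\epsilon^x)$. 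Since the map $(t,w,v)\mapsto(\tfrac{aw}{\rho},\tfrac{cv}{\rho})$ is continuous and the complement of the support is open, one gets the neighborhood immediately. Thus your closing remark that the strict inequality $c_0<c$ is ``exactly what supplies'' the open neighborhood is a slight overstatement: the bare inclusion of the (closed) support in the open ball of radius $c$ already does the job via continuity, without invoking compactness of the fiber support. Your cone argument is a nice explicit substitute, but not strictly necessary.
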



\begin{proof}
  The only term that is not obviously smooth is $\chi(\rho^2)\epsilon\left(x;\frac{aw}{\rho},\frac{cv}{\rho}\right)\delta_{t>0}$ due to the factor $\delta_{t>0}$. We thus need to check smoothness at points where $t=0$. By definition of $\chi$ the first factor is zero in a neighborhood if $\rho<1$. Hence we restrict to points where $\rho(r)=r \geq 1$. In this case we have $(\frac{aw}{\rho})^2 + (\frac{cv}{\rho})^2 \geq \min(a^2,c^2)=c^2$ hence the middle factor vanishes in a neighborhood (by assumption on $c$).
\end{proof}

We shall again use the coordinate system of Equation \eqref{eq:change_coordinates} in the region $1 \leq r$ and   $0 <t $, where the function is given by
\begin{equation} \label{eq:quadratic_coordinates_G}
  \frac{s}{a}\left(w'+\chi(s)\epsilon^x(w',v')+g^x(v')\right)+h(s).
\end{equation}
If we restrict further to the region $\sqrt{2} \leq r$,
the expression simplifies to
\[\frac{s}{a}f^x(w',v')+h(s). \]
For the statement of the next result, we associate to the function $h$ the diffeomorphism $\varphi_a$ from $(-\frac{a}{3},0) \to (h(2)-\frac{2}{3},0)$ given by
\[\varphi_a(z)=h\left((h'_{\mid [2,3]})^{-1}\left( -\frac{z}{a}\right)\right)+\frac{z}{a}(h'_{\mid [2,3]})^{-1}\left(-\frac{z}{a}\right).\]

\begin{lemma}
  The function $F_a^x$ is almost quadratic and its critical points in the region $\sqrt{2} < r$ and   $0 <t $, whenever the constant $a$ is sufficiently large,  generates  the image of $L$ under the contactomorphism
  \[(x,p,z)\mapsto (x,\varphi_a'(z)p,\varphi_a(z)).\]
\end{lemma}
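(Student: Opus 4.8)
The plan is to treat the two assertions in turn, in each case reducing the question to a region where $F_a^x$ already has a transparent form: the far region $\{r\ge\sqrt 3\}$, where $\rho(r)=r$, $\chi(\rho^2)=1$ and $h(\rho^2)=0$ so that $F_a^x$ coincides identically with $G_a^x$, and the region $\{\sqrt 2<r,\ 0<t\}$, where the coordinates of \eqref{eq:change_coordinates} turn $F_a^x$ into $\Phi(s,w',v'):=\frac{s}{a}f^x(w',v')+h(s)$ on $(2,\infty)$ times the open ellipsoid.

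For the \emph{almost quadratic} claim, I would take the quadratic part to be $G:=G_a^x$. By Corollary~\ref{cor: const}, for $a>A$ the restriction of $G_a^x$ to $\R^{2+n}\setminus\{0\}$ is transverse to $0$; since $G_a^x$ is $2$-homogeneous, Euler's identity $\langle\nabla G_a^x(v),v\rangle=2\,G_a^x(v)$ shows $\nabla G_a^x$ is also nonzero wherever $G_a^x\neq 0$, so $\nabla G_a^x$ vanishes only at the origin, and $G_a^x$ is $C^1$ at the origin (its gradient is $1$-homogeneous and bounded on the sphere, hence extends continuously by $0$). Thus $G_a^x$ is a non-degenerate quadratic function in the sense of Definition~\ref{def:non-deg-quadratic}. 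Then $E:=F_a^x-G_a^x$ is smooth by Lemma~\ref{lem:compact_support} and is supported in $\{r\le\sqrt 3\}$, so $\|\nabla E^x\|$ is bounded by a constant (depending continuously on $x$ for $x$ in the compact $K$), which is exactly the condition in Definition~\ref{def:almostquadfun}; this $G$ agrees with the canonical one of Remark~\ref{def:quadratic_function_determined_by_almost} since $F_a^x(\lambda v)=\lambda^2 G_a^x(v)$ for large $\lambda$.

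For the statement about critical points, I would work in $\{\sqrt 2<r,\ 0<t\}$ with the coordinates \eqref{eq:change_coordinates}, where $\nabla\Phi$ vanishes iff $\nabla_{(w',v')}f^x(w',v')=0$ (as $s>0$) and $f^x(w',v')=-a\,h'(s)$; so any critical point lies over $\Sigma_{f^x}$, and since $\Sigma_{f^x}\subset\{w^2+|v|^2<c^2\}$ and $a\ge c$ its $(w',v')$-image is inside the open ellipsoid, so such points do occur. Because $\Sigma_{f^x}$ stays in a fixed compact set on which $f^x$ is bounded, uniformly for $x$ in $K$, and because transversality of $G_a^x$ to $0$ away from the origin (Lemma~\ref{lem:propertyFa}, Corollary~\ref{cor: const}) forces $f^x$ to be nonvanishing on $\Sigma_{f^x}$, I conclude that for $a$ large the equation $h'(s)=-f^x(w',v')/a$ has a solution with $s>2$ precisely when $f^x(w',v')<0$, and then a unique one, $s=\sigma(z):=(h'_{\mid [2,3]})^{-1}(-z/a)\in(2,3)$ with $z=f^x(w',v')$; here I use that $h'$ restricts to a strictly decreasing diffeomorphism $[2,3]\to[0,\frac{1}{3}]$ and vanishes on $[3,\infty)$. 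Hence the fiberwise critical locus of $F_a^x$ in this region is the graph $\{(\sigma(f^x(p)),p):p\in\Sigma_{f^x}\cap\{f^x<0\}\}$, diffeomorphic over $M'$ to $\Sigma_{f^x}\cap\{f^x<0\}$, and $\nabla F_a^x$ is transverse to $0$ along it (there the Hessian is block-diagonal with blocks $h''(\sigma(z))$ and $\frac{\sigma(z)}{a}D^2_{(w',v')}f^x(p)$, both invertible since $h''<0$ on $(2,3)$ and $f$ generates). Finally, at the critical point attached to $p$, with $z=f^x(p)$, the value of $F_a^x$ is $\frac{\sigma(z)}{a}z+h(\sigma(z))=\varphi_a(z)$ by definition of $\varphi_a$, and, as the coordinate change does not involve $x$, the base derivative is $\partial_x F_a=\frac{\sigma(z)}{a}\,\partial_x f^x$; substituting $h'(\sigma(z))=-z/a$ into $\varphi_a'(z)=h'(\sigma(z))\sigma'(z)+\frac{1}{a}\sigma(z)+\frac{z}{a}\sigma'(z)$ gives $\varphi_a'(z)=\frac{\sigma(z)}{a}>0$, so $\partial_x F_a=\varphi_a'(z)\,\partial_x f^x$. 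Therefore the point of the generated Legendrian over $x$ coming from $p$ is $(x,\varphi_a'(z)\,\partial_x f^x(p)\,dx,\varphi_a(z))$, the image of the point $(x,\partial_x f^x(p)\,dx,z)$ of $L$ under $(x,p,z)\mapsto(x,\varphi_a'(z)p,\varphi_a(z))$, which is a contactomorphism onto its image since $d\varphi_a(z)-\varphi_a'(z)p\,dx=\varphi_a'(z)(dz-p\,dx)$ with $\varphi_a'>0$.

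The one place where ``$a$ sufficiently large'' really earns its keep beyond Corollary~\ref{cor: const} is the identification of the critical locus inside the prescribed region with no spurious components: one must see that every critical point there lies over $\Sigma_{f^x}$, that the auxiliary profile $h$ selects exactly the negative critical values of $f^x$ and drops them into the window $s\in(2,3)$ (which needs the uniform bound on $f^x|_{\Sigma_{f^x}}$ over the compact $K$), and that no critical point of value $0$ intrudes (excluded via transversality of $G_a^x$). The remaining ingredients --- the coordinate change \eqref{eq:change_coordinates}, Euler's identity for the homogeneous part, and the elementary relation $\varphi_a'(z)=\sigma(z)/a$ --- are routine bookkeeping.
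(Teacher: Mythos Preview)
Your argument is correct and follows essentially the same approach as the paper: the almost quadratic claim via $F_a^x=G_a^x$ outside $\{r\le\sqrt 3\}$, and the critical point analysis in the coordinates \eqref{eq:change_coordinates} where $F_a^x$ becomes $\frac{s}{a}f^x+h(s)$. Your write-up is in fact more complete than the paper's own proof, which computes only the critical value $\varphi_a(z)$ and leaves implicit the verification that $G_a^x$ is non-degenerate (your Euler-identity step), the exclusion of spurious critical points coming from $f^x=0$ on $\Sigma_{f^x}$, the computation $\partial_x F_a=\varphi_a'(z)\,\partial_x f^x$ giving the $p$-component, and the check that the displayed map is a contactomorphism.
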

\begin{proof}
  Since $F_a^x$ agrees with $G_a^x$ outside the ball of radius $\sqrt 3$, it is quadratic. The critical points of $F^x_a$ in the region where $\sqrt 2<r$ and $0<t$ are points $(s,w',v')$ such that $(w',v')$ is a critical point of $f^x$ and $f^x(w',v')=-ah'(s)$.
  As $h'$ is decreasing in $(2,3)$ and defines a diffeomorphism $(2,3)\to (0,\frac{1}{3})$, this will occur precisely
  at one value of $s$ for each critical point of $f^x$ with negative critical value
  (if $a>0$ is large enough so that $-a/3$ is less than all critical values of $f^x$). 
  
  Now, the parameter $s$ is determined by the Equation $\frac{1}{a}f^x(w',v') + h'(s) = 0$, so for a critical point $(w',v')$ with critical value $z$ the corresponding parameter is $s=(h')^{-1}(-z/a)$. This in turn gives a critical value of
$$ \frac{s}{a}f^x(w',v') + h(s) = \frac{sz}{a} + h(s) = \frac{z}{a}(h')^{-1}(-\frac{z}{a} ) + h\left( (h')^{-1}(-\frac{z}{a} ) \right) = \varphi_a(z) .$$

\end{proof}

We now complete the proof of the main result of this section, where we recall
\[F_a(x;t,w,v)=\frac{\rho^2}{a}\left(\frac{aw}{\rho} + \chi(\rho^2)\epsilon\left(x;\frac{aw}{\rho},\frac{cv}{\rho}\right)\delta_{t>0}+g\left(x;\frac{cv}{\rho}\right)\right)+ h(\rho^2)\]

\begin{proof}[Proof of Proposition \ref{prop:quadratic_generating_function}]


By deforming $\varphi_a$ continuously to the identity map $\text{id}:(-\frac{a}{3},0) \to (-\frac{a}{3},0)$, we obtain
a Legendrian isotopy from $L$ to the Legendrian generated by $F_a$, as long as we show that there is no critical point in the ball of radius $\sqrt{2}$ and in the region $t \leq 0$.  

In the region $1 \leq r^2 \leq 2$ and $ 0 < t$, using Expression (\ref{eq:quadratic_coordinates_G}), we compute that 
\begin{align}
  \frac{\del F^x_a}{\del w'} & =\frac{s}{a}\left(1+ \chi(s)\frac{\del\epsilon^x}{\del w'}(w',v')\right).
\end{align}
If $\del F^x_a /\del w'=0$, then $(w',v') \in \supp(\epsilon^x)$ so we have $|(w',v')|\leq c$. As also $1 \leq s \leq 2$ we arrive at a region in which $ \partial F^x_a / \partial s$ is the sum of $h'(s)$ with a function whose $C^1$-norm goes to $0$ in the limit $a \to +\infty$, so the fact that  
$h'(s)=\frac{1}{3}$ implies the non-vanishing of the $s$ derivative 
so that $F^x_a$ has no critical points for large $a$.


In the region $(\{t\leq \delta r\} \cap \{r \leq \sqrt 3\}) \cup \{r\leq 1\}$ with small $\delta >0$, we have
\[F_a(x;t,w,v)=\rho w+\frac{\rho^2}{a} g\left(x;\frac{cv}{\rho}\right)+ h(\rho^2)\]
On this compact set we can for large enough $a$ ignore the middle term and compute:
\[\frac{\del (\rho w + h(\rho^2))}{\del w}=\rho+\rho'(r)\frac{w^2}{r}+2\rho\rho'(r)\frac{w}{r}h'(\rho^2)  \geq (1+2\rho'(r)\frac{w}{r}h'(\rho^2))\rho \geq \frac13 \rho\]
which is bounded from below.

In the region $\{t\leq \delta r\} \cap \{r \geq \sqrt 3\}$ we have
\[F_a(x;t,w,v)=r w+\frac{r^2}{a} g\left(x;\frac{cv}{r}\right) \]
which is homogeneous of degree 2. So the fact that it has no critical points near the boundary $r=\sqrt{3}$ implies that it has no critical points. 



Finally we assume that $f$ is of tube type and prove that the same holds for $F$. By definition, for any fixed $x$, there is a homotopy $f^x_\sigma$, $\sigma \in [0,1]$, from $f^x=f^x_0$
to a function $f^x_1=D\oplus_4 q$ where $q\in \QQ$ and $D: \R\to \R$ is the model function ($D(w)=w+\epsilon(w)$). We drop the superscript $x$ from now on.
The construction above works with the parameter $\sigma$ by choosing the constant $a>0$ large enough (but fixed), and we obtain a family of functions $(F_\sigma, G_\sigma)$, where 
$$G_\sigma(t,w,v) = \frac{r^2}{a} \left( \frac{aw}{r} +  \varepsilon\right(\frac{aw}{r},\frac{cv}{r} \left) \delta_{t >0} + g \left(\frac{cv}{r} \right) \right) $$

We just need to check that $G_1$ is isotopic to a non-degenerate quadratic form.
It writes:
\[G_1(t,w,v)=rw+\frac{r^2}{a}\epsilon(\frac{aw}{r})\delta_{t>0}+ \frac{c^2}{a}q(v).\]
Therefore $G_1$ decomposes as $H\oplus \widehat{q}$
where $\widehat{q}(v)=\frac{c^2}{a}q(v)$ and $H:\R^2\to \R$ is given by
\[H(t,w)=rw+\frac{r^2}{a} \epsilon\left(\frac{aw}{r}\right)\delta_{t>0}.\]
By assumption, the function $D(w)=w+\varepsilon(w)$ vanishes transversely in three points $w_1,w_2,w_3$ (recall $D(w)$ looks like $w^3-w$).
So on the circle $\{t^2+w^2=1\}$ the function $H$ vanishes transversely at $4$ points: 
\[(-1,0),\quad \left(\frac{\sqrt{a^2-w_i^2}}{a}, \frac{w_i}{a}\right)\quad i=1,2,3.\]
So this function is
isotopic to the function $p(t,w)=tw$ through quadratic functions with non-vanishing gradient away from $0$.
Hence there exists an isotopy from $G_1$ to the non-degenerate quadratic form $p \oplus \widehat{q}$ through non-degenerate quadratic functions and hence $G$, and therefore also $F$, is of tube type. 
\end{proof}

\subsection{From linear to quadratic: twisted case}

Let  $(M_i)_{i\in I}$ be a directed open cover of a closed manifold $M$ and let $(n_i,b,f_i,q_{ij})_{i\in I}$ be a twisted generating function linear at infinity with respect to
this cover. This means that  $f_j:M_j\times \R_w \times \R^{n_j}_{v_j}\to \R$ is a smooth function of the form
\[f_j(x;w,v_j)=w+\epsilon_j(x;w,v_j)+g_j(x;v_j),\]
where $q_{ij}(x;-)\in\QQ$ for all $x\in M_{ij}$, and, whenever $i<j$ and $x \in M_{ij}$, we have
\begin{align}
  g_j(x;v_j) & =g_i(x;v_i)\oplus q_{ij}(x;v_{ij}) \\
  \epsilon_j(x;w,v_i,v_{ij})& =\chi_{n_j-n_i}(b^{-1}v_{ij})\epsilon_i(x;w,v_i),
\end{align}
where we decompose
\begin{equation}
  \label{eq:1}
  v_j=(v_i,v_{ij}) \in \R^{n_i} \cong \R^{n_j} \oplus \R^{n_j-n_i}.
\end{equation}
Since $M$ is closed we may assume for simplicity that the indexing set is finite, so we write $i \in \{1,\ldots,k\}$,  and will sometimes find it useful to use the notation $v_1=v_{0,1}$ despite the fact that we do not have any element of the cover labelled by $i=0$.
We assume that for each $j$ and $x\in M_j$, $f_j^x$ is transverse to $0$, and that the map $\supp(\epsilon_j)\to M_j$ is proper for each $j$.

The main result of this section is the extension of Proposition \ref{prop:quadratic_generating_function} to the twisted case.
\begin{proposition}\label{prop:linear_to_quadratic_twisted}
Let $(n_i,b,f_i,q_{ij})$ be a linear at infinity twisted generating function
with respect to an open cover $(M_i)_{i\in I}$ of $M$ by open subsets $M_i \subset M$ with compact closure, which generates
a Legendrian submanifold $L \subset J^1M$ when restricted to $\{f_i<0\}$.
There exists an almost quadratic twisted generating function $(n_i,F_i,q_{ij})$
with respect to an open cover $(N_i)_{i\in I}$ of $M$, with $N_i \subset M_i$, such that the Legendrian submanifold in $J^1M$ which is
generated by the $F_i$ is Legendrian isotopic to $L$. Moreover if $f_i$ is of tube type, then
we can assume that $F_i$ is also of tube type.
\end{proposition}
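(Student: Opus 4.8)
The plan is to reduce Proposition~\ref{prop:linear_to_quadratic_twisted} to the untwisted case (Proposition~\ref{prop:quadratic_generating_function}) by applying the fiberwise construction simultaneously across all the charts $M_i$, with a \emph{single} large constant $a$ chosen uniformly. First I would observe that the compatibility condition $\epsilon_j(x;w,v_i,v_{ij})=\chi_{n_j-n_i}(b^{-1}v_{ij})\,\epsilon_i(x;w,v_i)$ means the cut-off factors $\chi$ appearing in the linear-at-infinity theory are the only obstruction to the defining data being literally of the form $f_j = f_i \oplus q_{ij}$; so before quadraticizing I would shrink the cover from $(M_i)$ to $(N_i)$ with $\overline{N_i}\subset M_i$ and, using the bound $|\epsilon_i|\le b$ together with the properties of $\chi_m$ (recalled in Section~\ref{sec:gf}), replace $f_i$ on $N_i$ by a genuinely split representative, i.e. arrange that over $N_{ij}$ one has $f_j = f_i \oplus q_{ij}$ on the nose. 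This step is where the passage from linear to almost quadratic buys us something: the stabilization $F \mapsto F\oplus q$ by an orthogonal quadratic form requires no cut-off, so once the $f_i$ split exactly, the quadraticized $F_i$ will automatically satisfy $F_j = F_i \oplus q_{ij}$.

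Next I would run the construction of Proposition~\ref{prop:quadratic_generating_function} (the explicit formula producing $F_a$ from $f$, via the intermediate $G_a$) on each $N_i$, choosing one constant $a$ large enough to work for all $i$ at once; this is possible because there are finitely many charts and the relevant largeness conditions (Corollary~\ref{cor: const} and the transversality/no-critical-point estimates in the proof of Proposition~\ref{prop:quadratic_generating_function}) are each satisfied for $a$ past some threshold, and we take the max. I would then need to check that the construction is \emph{natural} with respect to the stabilization $f \mapsto f \oplus q$ by an orthogonal quadratic form, i.e. that applying the formula to $f_i \oplus q_{ij}$ yields $F_i \oplus q_{ij}$ (with the same $\widehat q_{ij}$ up to the irrelevant rescaling by $c^2/a$, which one absorbs by a fixed rescaling or by noting the space of such forms is connected). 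This is a direct inspection of the formula for $F_a$: the extra quadratic variables $u$ enter $g$ additively as $g \oplus q_{ij}$, the $\epsilon$-term and the cut-off $\chi(\rho^2)$ only see the old variables, and $\rho$ is applied to the full radius — so one must verify that this last point does not spoil the splitting. In fact it does not spoil the \emph{Legendrian}, since $F_a^x$ agrees with the honestly quadratic $G_a^x$ outside a compact set and the critical points (computed in the proof above to be $\varphi_a(z)$-images of those of $f^x$) are unaffected by adding nondegenerate quadratic directions; alternatively one first splits off $q_{ij}$ and only then applies the construction to $f_i$ alone, which makes the compatibility tautological.

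With split compatibility in hand, the cocycle condition $q_{ij}\oplus q_{jk}=q_{ik}$ for the $F_i$ is inherited verbatim from the one for the $f_i$, the tube-type conclusion for each $F_i$ is exactly Proposition~\ref{prop:quadratic_generating_function}, and the resulting $(n_i,F_i,q_{ij})$ is an almost quadratic twisted generating function over $(N_i)$. Finally the Legendrian isotopy: each $F_i$ generates (over $N_i$) a Legendrian isotopic to $L\cap J^1 N_i$ via the explicit contactomorphism $(x,p,z)\mapsto(x,\varphi_a'(z)p,\varphi_a(z))$, and since $a$, hence $\varphi_a$, is the same across all charts, these local isotopies are compatible on overlaps and assemble to a global Legendrian isotopy from $L$ to the Legendrian generated by the $F_i$; deforming $\varphi_a$ to the identity as in the untwisted proof does the rest. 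The main obstacle I anticipate is the bookkeeping in the first step — producing the exactly-split representatives $f_i$ on the shrunken cover while preserving transversality of $\nabla f_i^x$, the bound $|\epsilon_i|\le b$, properness of $\mathrm{supp}(\epsilon_i)\to N_i$, and the tube-type condition — together with making sure a single $a$ can be chosen; neither is deep, but both require care in quantifier order.
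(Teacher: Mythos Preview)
Your first step contains a genuine obstruction, not just bookkeeping. You propose to replace the $f_i$ by representatives satisfying $f_j = f_i \oplus q_{ij}$ exactly on the shrunken overlaps $N_{ij}$, while keeping each $f_j$ linear at infinity. Writing $f_k = w + g_k + \epsilon_k$, the exact splitting together with $g_j = g_i \oplus q_{ij}$ forces $\epsilon_j(x;w,v_i,v_{ij}) = \epsilon_i(x;w,v_i)$, independent of $v_{ij}$; hence wherever $\epsilon_i \neq 0$ the support of $\epsilon_j$ is unbounded in the $v_{ij}$-direction and properness of $\supp(\epsilon_j) \to N_j$ fails. The cut-offs $\chi_{n_j-n_i}$ are not an artifact removable by shrinking the cover: they are precisely what makes $f_j$ linear at infinity in the extra variables, and one cannot have both the exact splitting and the linear-at-infinity structure simultaneously. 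Without step 1 your step 3 also collapses. You correctly note that the radius $\rho$ in the formula for $F_a$ involves all variables, so $F_a[f \oplus q] \neq F_a[f] \oplus q$; and your fallback (``split off $q_{ij}$ first, then apply the construction to $f_i$ alone'') presupposes exactly the splitting $f_j = f_i \oplus q_{ij}$ that step 1 was meant to supply --- otherwise you have no consistent definition of $F_j$ on $N_j \setminus \bigcup_{i<j} N_{ij}$.

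The paper's proof is accordingly more than a chartwise application of the untwisted case with a uniform $a$. It never tries to make the $f_j$ split. Instead it introduces fiberwise linear projections $\Pi_j$ (built from bump functions $\kappa_i$ on $M$) that collapse the extra variables $v_{ij}$ to zero once one is deep inside $M_i$, so that $G'_j := G_j \circ \Pi_j$ satisfies $G'_j = G'_i$ (not $G'_i \oplus q_{ij}$) on the shrunken overlaps. To restore the desired splitting it adds auxiliary, possibly \emph{degenerate}, fiberwise quadratic forms $q_j$ with $q_j = q_i \oplus q_{ij}$ and sets $G''_j = \alpha^{-1} G'_j + q_j$; the compatibility $G''_j = G''_i \oplus q_{ij}$ then holds by construction. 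The projections destroy the transversality of the fiberwise gradient, and a further small parameter $\alpha \to 0$ is used to recover it and to control the generated Legendrian. None of this mechanism is visible from Proposition~\ref{prop:quadratic_generating_function} alone.
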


We start by picking open subsets $M'_i \subset M_i$ with the property that $M=\bigcup_i M'_i$ and $K_i=\overline{M'_i} \subset M_i$,
 and choose cut-off functions $\kappa_i : M\to [0,1]$ such that $\kappa_i$ has support in $M_i$ and
$(\kappa_i)^{-1}(1) \supset K_i$.  We pick $c>0$ large enough so that the support of $\epsilon^x_i$
is contained in $\{w^2+|v_i|^2 < c\}$ for all $i$ and $x\in K_i$.

We first define $G_j : M_j \times \R^2\times \R^{n_j}\to \R$ as in the previous section. This depends on some parameter $a>0$
which is chosen large enough so that Corollary \ref{cor: const} is valid for all $j$ (where the function is defined over the open subset $M_j$, but we only require the bound to work on the compact subset $K_j$).
Recall the explicit formula:
\[G_j(x;t,w,v)= \frac{r_j^2}{a}\left(\frac{aw}{r_j}+\epsilon_j\left(x;\frac{aw}{r_j},\frac{cv_j}{r_j}\right)\delta_{t>0} + g_j\left(x;\frac{cv_j}{r_j}\right)\right) \]
where $r_j=\sqrt{t^2+w^2+|v_j|^2}$.

Then we define maps 
\[\pi_j:M_j \times \R^{n_j}\to \R^{n_j}\]
by the formula:
\[\pi_j(x;v_j)=(v_{0,1},(1-\kappa_1(x))v_{1,2},\dots, \prod_{1 \leq i <  j}(1-\kappa_i(x))v_{j-1,j}).\]
Observe the following key property: for $i<j$, if $x\in M_j \cap K_i$, then $\pi_j(x;v_i,v_{ij})=(\pi_i(x;v_i),0)$ since $\kappa_i(x)=1$.
For convenience we also define
\[\Pi_j:M_j \times \R^2\times \R^{n_j} \to M_j \times \R^2\times \R^{n_j},\]
\[\Pi_j(x;t,w,v_j)=(x;t,w,\pi_j(x;v_j)).\]

We now define a modified version $G'_j$ of $G_j$ using these projections:
\[G'_j=G_j\circ\Pi_j.\]

Observe that for $i<j$, $x\in M_j \cap K_i$ and $(t,w,v_j)\in \R^2_{t,w} \times \R^{n_j}_{v_j}$, we have:
$$
G'_j(x;t,w,v_j)= G'_j(x;t,w,v_i,v_{ij})=G_j(x;t,w,\pi_j(v_i,v_{ij}))$$ $$ =G_j(x;t,w,\pi_i(v_i),0)=G_i(x;t,w,\pi_i(v_i))=G_i'(x;t,w,v_i),
$$
since $q_{ij}(0)=0$ and $\chi_{n-j-n_i}(0)=1$.

Instead of this equality after restriction, we would like to have $G'_j =G'_i \oplus q_{ij}$, so we will compensate for that using the following lemma.

\begin{lemma}
There exists smooth functions $q_i :M_i \times \R^{n_i}\to \R$ which are fiberwise quadratic forms such that
for all $i<j$ and $x\in M_{ij}$, 
\[q_j(x;v_j)=q_i(x;v_i)+q_{ij}(x;v_{ij}).\]
\end{lemma}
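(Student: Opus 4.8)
The plan is to write down the $q_i$ by a direct formula rather than to patch them together over the cover: I would take $q_i$ to be one half of the fiberwise Hessian of $g_i$ at the origin of the fiber, and then obtain the required compatibility by differentiating twice the defining relation $g_j=g_i\oplus q_{ij}$.

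In detail: for each $i$ and each $x\in M_i$ the map $v_i\mapsto g_i(x;v_i)$ is a smooth function on $\R^{n_i}$ (here $g_i$ is the non-linear-at-infinity part of the generating function $f_i=w+\epsilon_i+g_i$, and is smooth), so it has a well defined Hessian at $v_i=0$. I would set
\[q_i(x;v_i):=\tfrac12\,\mathrm{Hess}_{v_i=0}\!\big(g_i^x\big)(v_i,v_i).\]
This is smooth in $x$ — it is a second partial derivative of the smooth function $g_i$, evaluated at the fixed fiber point $v_i=0$ — and by construction $q_i^x$ is a quadratic form on $\R^{n_i}$ for every $x$, so $(q_i)$ is a fiberwise quadratic form on $M_i\times\R^{n_i}$ (generally a degenerate one, which is all the statement requires).

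Now fix $i<j$ and $x\in M_{ij}$. By definition of a twisted generating function linear at infinity we have, in the splitting $\R^{n_j}=\R^{n_i}\times\R^{n_j-n_i}$,
\[g_j(x;v_i,v_{ij})=g_i(x;v_i)+q_{ij}(x;v_{ij}).\]
Taking the Hessian in the $n_j$ fiber variables at the origin: the first summand depends only on $v_i$ and the second only on $v_{ij}$, so there are no mixed second derivatives and
\[\mathrm{Hess}_{0}\!\big(g_j^x\big)=\mathrm{Hess}_{0}\!\big(g_i^x\big)\ \oplus\ \mathrm{Hess}_{0}\!\big(q_{ij}^x\big),\]
a block-diagonal form. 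Since the Hessian of a quadratic form is twice that form, dividing by $2$ yields precisely $q_j(x;v_j)=q_i(x;v_i)+q_{ij}(x;v_{ij})$ on $M_{ij}$, which is the assertion.

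I do not expect a real obstacle here; the argument is essentially forced once one differentiates. The only points needing care are that $g_i$ is genuinely smooth (so that the fiberwise Hessian exists and varies smoothly) and that ``fiberwise quadratic form'' in the conclusion is understood in the weak sense, with no non-degeneracy imposed — this is what lets $\tfrac12\mathrm{Hess}_0(g_i)$ qualify even though $g_i$ is nowhere close to being quadratic. Note that the cocycle relation $q_{ij}\oplus q_{jk}=q_{ik}$ is automatically compatible with these $q_i$ and need not be invoked. If one instead wanted the $q_i$ to be non-degenerate, or to land in $\QQ$, this formula would not be enough and a genuinely different — and more delicate, probably inductive with a careful shrinking of the cover — argument would be required.
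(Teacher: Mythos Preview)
Your proof is correct and takes a genuinely different route from the paper. The paper argues abstractly: the space of (possibly degenerate) quadratic forms on $\R^{n_i}$ is convex, so one can build the $q_i$ inductively over the indexing set $I$, extending at each step using convexity (essentially a partition-of-unity/obstruction-theory argument). Your approach instead exploits the specific origin of the cocycle $q_{ij}$ in this problem --- namely that it arises from the compatibility $g_j=g_i\oplus q_{ij}$ --- and writes down a global solution $q_i=\tfrac12\mathrm{Hess}_0(g_i^x)$ in one stroke.

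What each buys: your argument is shorter, completely explicit, and avoids any induction or shrinking of the cover; it also makes transparent why no non-degeneracy can be expected. The paper's argument, by contrast, would apply to any abstract $\QQ$-valued \v{C}ech cocycle $(q_{ij})$ on the cover, without needing it to come from differentiating a compatible family of smooth functions; in that sense it is the more portable lemma. For the purpose at hand both are perfectly adequate, since the $q_i$ are used only as an additive correction term $G_j''=\alpha^{-1}G_j'+q_j$ with no further structural demands.
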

\begin{proof} This can be done by induction on the cover since the space of quadratic forms (dropping the non-degeneracy condition) is convex .
\end{proof}

Finally we pick the functions $q_i$ provided by the previous lemma and define:
\[G_j'':M_j\times \R^{2+n_j} \to \R\]
as
\[G_j''=\frac{1}{\alpha} G_j'+q_j\]
for some parameter $\alpha>0$ to be chosen small enough later. The following basic properties are immediate consequences of the construction:
\begin{enumerate}
\item $G_j''$ is a $C^1$-function,
\item $G_j''$ is homogeneous of degree $2$ in the coordinates $(t,w,v_j)$,
\item For $i<j$ and $x\in M'_{ij}$, $G''_j(x;t,w,v_j)=G''_i(x;t,w,v_i)\oplus  q_{ij}(x;v_{ij})$.
\end{enumerate}




\begin{lemma}
There is $\alpha_0>0$ small enough so that for all $0<\alpha<\alpha_0$, all $j$ and $x\in K_j$, $G_j''^x$ is transverse to $0$ away from the origin (or equivalently, when restricted to 
the unit sphere).
\end{lemma}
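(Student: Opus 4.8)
The plan is to prove the transversality fiberwise for each of the finitely many indices $j$, obtaining a threshold $\alpha_0^{(j)}$ for each, and then to take $\alpha_0=\min_j\alpha_0^{(j)}$. After harmlessly replacing $K_i$ by $\kappa_i^{-1}(1)$ (still compact in $M_i$ and still a cover, at worst enlarging $a$) we may assume $K_i=\kappa_i^{-1}(1)$. For $x\in K_j$ write $i=i(x)$ for the smallest index with $\kappa_i(x)=1$; then $i\le j$, the scalings $\lambda_\ell(x)=\prod_{p<\ell}(1-\kappa_p(x))$ occurring in $\pi_j^x$ are strictly positive for $\ell\le i$ and vanish for $\ell>i$, so $\Pi_j^x$ splits as $\Lambda^x\oplus 0$ with respect to $\R^{2+n_j}=\R^{2+n_i}\oplus\R^{n_j-n_i}$, where $\Lambda^x$ is a linear automorphism of $\R^{2+n_i}$ and the second factor carries the $v_{ij}$-coordinate.

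\textbf{Step 1 (homogeneity reduction).} Each $G_j''^x$ is $C^1$ and $2$-homogeneous in $(t,w,v_j)$, so Euler's identity $\langle\nabla G_j''^x(z),z\rangle=2\,G_j''^x(z)$ shows that at every zero the gradient is tangent to the ambient sphere. Hence $G_j''^x$ is transverse to $0$ on $\R^{2+n_j}\setminus\{0\}$ if and only if $\nabla G_j''^x$ is nowhere zero there, which by $2$-homogeneity is equivalent to $\nabla G_j''^x$ being nowhere zero on the unit sphere. So it suffices to bound $|\nabla G_j''^x|$ away from $0$ on the unit sphere, uniformly for $x\in K_j$.

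\textbf{Step 2 (direct-sum decomposition).} The twisting relations give $G_j^x|_{\{v_{ij}=0\}}=G_i^x$ over $M_{ij}$, and a short computation — using $\nabla\chi_{n_j-n_i}(0)=0$ together with $q_{ij}(0)=0$ and $2$-homogeneity — shows in addition that the $v_{ij}$-partial derivative of $G_j^x$ vanishes along $\{v_{ij}=0\}$. Consequently $G_j'^x=G_j^x\circ\Pi_j^x$ depends only on $v'=(t,w,v_i)$ and equals $G_i^x\circ\Lambda^x$ there, while $q_j^x=q_i^x+q_{ij}^x$ over $M_{ij}$, so that
\[G_j''^x\;=\;\Big(\tfrac1\alpha\,G_i^x\circ\Lambda^x+q_i^x\Big)\ \oplus\ q_{ij}^x\]
is a direct sum of a $2$-homogeneous function $A^x_\alpha$ on $\R^{2+n_i}$ with $q_{ij}^x$ on $\R^{n_j-n_i}$. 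A direct sum of two $2$-homogeneous functions is transverse to $0$ away from the origin exactly when each summand is; since $q_{ij}^x$ is non-degenerate, the problem reduces to showing $A^x_\alpha$ is transverse to $0$ away from the origin, i.e.\ $\nabla(G_i^x\circ\Lambda^x)+\alpha\nabla q_i^x$ is nowhere zero on the unit sphere of $\R^{2+n_i}$. Note that $A^x_\alpha$ is precisely the level-$i$ function $G_i''^x$ for those $x$ with $i(x)=i$, for which $\Pi_i^x=\Lambda^x$.

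\textbf{Step 3 (pointwise estimate and uniformity — the main obstacle).} For fixed $x$ this is immediate: by Corollary~\ref{cor: const}, valid on $K_i$ and so at $x\in K_i$, $G_i^x$ is transverse to $0$ away from the origin, hence so is $G_i^x\circ\Lambda^x$; thus $|\nabla(G_i^x\circ\Lambda^x)|$ has a positive minimum $\delta(x)$ on the unit sphere, and $\nabla A^x_\alpha\ne 0$ there once $\alpha<\delta(x)/\sup_{|u|=1}|\nabla q_i^x(u)|$. The real difficulty is that this threshold must be made uniform in $x\in K_j$: the crude bound $\delta(x)\ge\lambda_{\min}(x)^2\,\delta_i$, with $\delta_i=\inf_{x\in K_i}\min_{|u|=1}|\nabla G_i^x(u)|>0$ (uniform by compactness of $K_i$), degenerates, because $\lambda_{\min}(x)\to 0$ as $x$ approaches the boundary of the stratum $\{i(x)=i\}$, that is, where an earlier cut-off reaches the value $1$. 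I would resolve this by induction on the number of charts combined with a compactness argument: on the compact set $K_j\cap\{\kappa_1=1\}$ one has $\Lambda^x=\mathrm{id}$ and the estimate is uniform by the above; off it one removes the first chart and invokes the inductive hypothesis, the two estimates being glued using that $G_j''^x$ depends continuously on $x$ and that, near a stratum boundary, the splitting of Step 2 exhibits $G_j''^x$ as a small $C^1$-perturbation of the direct sum $G_\ell''^{x_0}\oplus q_{\ell i}^{x_0}\oplus q_{ij}^{x_0}$ coming from the lower chart $\ell=i(x_0)$ that controls that boundary. Equivalently one argues by contradiction: a failure for $j$ would produce $\alpha_n\downarrow 0$, $x_n\to x_\infty\in K_j$ and unit vectors $z_n\to z_\infty$ with $\nabla G_j'^{x_n}(z_n)=-\alpha_n\nabla q_j^{x_n}(z_n)$; passing to the limit forces $\nabla G_j'^{x_\infty}(z_\infty)=0$, hence by Step 2 at $x_\infty$ the vector $z_\infty$ lies in the subspace annihilated by $\Pi_j^{x_\infty}$, on which $G_j''^{x_\infty}$ restricts to the non-degenerate form $q_{i(x_\infty),j}^{x_\infty}$ — which contradicts the vanishing of the corresponding component of $\nabla G_j''^{x_n}(z_n)$ once one controls, by iterating the reduction of Step 2 across the finitely many values of $i(x_n)$, the corresponding component of $\tfrac1{\alpha_n}\nabla G_j'^{x_n}(z_n)$. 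Steps~1 and~2 are formal; the uniformity of the threshold is where the work lies, and it is entirely an artifact of the cut-offs $\kappa_\ell$ compounding in $\pi_j$, so that $\Pi_j^x$ changes rank — locally constantly on the interior of each stratum, but degenerately at stratum boundaries, which are precisely where a lower chart supplies the missing bound.
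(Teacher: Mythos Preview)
Your Steps 1 and 2 are correct and match the paper. The direct-sum decomposition $G_j''^x = G_i''^x \oplus q_{ij}^x$ with $i=i(x)$, together with the observation that $\Lambda^x=\Pi_i^x$ is invertible at such $x$, is exactly what the paper uses.

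Where you diverge is Step 3. You frame the problem as obtaining a uniform threshold over each stratum $\{i(\cdot)=i\}$, worry about the degeneration $\lambda_{\min}(x)\to 0$ at stratum boundaries, and then propose an induction on charts or a sequential contradiction. The paper sidesteps all of this with a plain local-plus-compactness argument: since $\kappa_k(x)<1$ for every $k<i$, the same strict inequalities persist on an open neighbourhood $U_x$ of $x$, so $\Pi_i^y$ remains an isomorphism for every $y\in U_x$ --- with the \emph{fixed} index $i=i(x)$, irrespective of $i(y)$. Hence $G_i'^y=G_i^y\circ\Pi_i^y$ is transverse to $0$ for every $y$ near $x$; on a compact neighbourhood this gives a uniform positive lower bound for $|\nabla G_i'^y|$ on the unit sphere, so that $\alpha G_i''^y=G_i'^y+\alpha q_i^y$ is transverse for all such $y$ once $\alpha<\alpha_0(x)$. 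The direct sum then gives transversality of $G_j''^y$, and finitely many such neighbourhoods cover $K_j$. The point you missed is that the neighbourhood is chosen so that $\Pi_i$ (not $\Pi_{i(y)}$) stays invertible --- there is no stratification to manage, and the boundary where $\lambda_{\min}\to 0$ is simply covered from the other side by a neighbourhood centred at a point with smaller $i$.

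Your contradiction sketch is also not complete as written: when $x_n\to x_\infty$ with $i(x_n)>i_\infty:=i(x_\infty)$, the direct-sum relation at level $i_\infty$ need not hold at $x_n$ (it requires $\kappa_{i_\infty}(x_n)=1$), and the $v_{i_\infty j}$-component of $\tfrac{1}{\alpha_n}\nabla G_j'^{x_n}(z_n)$ is of indeterminate form $0/0$ in the limit, so the final contradiction does not follow without the additional control you allude to but do not supply. The local argument avoids this entirely.
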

\begin{proof}
Let us fix $j\in\{1,\dots,k\}$ and $x\in K_j$. Take $i$ minimal such that $x\in K_i$.
Then $\kappa_i(x)=1$ and, for $y$ in a neighborhood of $x$, $\kappa_k(y)\neq 1$ for all $k<i$, so $\Pi_i^y : \R^{2+n_i}\to \R^{2+n_i}$ is a (linear) diffeomorphism.
We know from Lemma~\ref{lem:propertyFa} that, for $y\in K_i$, $G^y_i$ vanishes transversely in $\R^{2+n_i}\setminus \{0\}$ as $a>0$ was chosen big enough, so
the same holds for $G_i'^{y}=G_i^y\circ \Pi_i^y$ with $y$ near $x$. Now $\alpha G_i''=G_i'+\alpha q_i$ restricted to the sphere $\{r_i=1\}$ converges to $G_i'$ in $C^1$-topology
as $\alpha$ goes to $0$, so $\alpha G_i''^{x}$ (and thus $G_i''^{x}$) is also transverse to $0$ in the complement of the origin for $\alpha >0$ small enough. Finally $G_j''^{y}=G_i''^{y}\oplus q^y_{ij}$
is also transverse to $0$ in the complement of the origin since $q_{ij}^y$ is non-degenerate.
As this is valid for $y$ near $x$, by compactness of $K_j$, for $\alpha >0$ small enough
this works for all $j$ and $x\in K_j$.
\end{proof}

Next we define the functions $F_j$ as in the previous section using the auxiliary functions $\rho$, $\chi$, $h$.
Recall $F_j:M_j \times \R^2 \times \R^{n_j} \to \R$ is defined by the formula
\[F_j(x;t,w,v_j)=\frac{\rho_j^2}{a}\left(\frac{aw}{\rho_j} + \chi(\rho_j^2)\epsilon_j\left(x;\frac{aw}{\rho_j},\frac{cv_j}{\rho_j}\right)\delta_{t>0}
+g_j\left(x;\frac{cv_j}{\rho_j}\right)\right)+ h(\rho_j^2)\]
where 
\[\rho_j(t,w,v_j)=\rho(r_j)=\rho\left(\sqrt{t^2+w^2+|v_j|^2}\right).\]

And similarly we define $F'_j:M_j\times \R^{2+n_j}\to \R$ and $F_j'':M'_j\times \R^{2+n_j}\to \R$:
\[F'_j=F_j\circ \Pi_j,\]
\[F_j''=\alpha^{-1}F_j'+q_j.\]
\begin{lemma}
For all $j$, $F_j$ , $F_j'$ and $F''_j$ are smooth.
\end{lemma}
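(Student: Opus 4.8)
The plan is to reduce all three assertions to the untwisted statement Lemma~\ref{lem:compact_support}, since by construction each $F_j$ has \emph{exactly} the shape of the function $F_a$ treated there, with $\epsilon_j, g_j$ and $r_j$ playing the roles of $\epsilon, g$ and $r$; once $F_j$ is known to be smooth, the smoothness of $F_j' = F_j\circ\Pi_j$ and of $F_j'' = \alpha^{-1}F_j' + q_j$ will follow formally.

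First I would prove that $F_j$ is smooth. Inspecting the defining formula, every factor is a composition of smooth maps except the term $\chi(\rho_j^2)\,\epsilon_j\!\big(x;\tfrac{aw}{\rho_j},\tfrac{cv_j}{\rho_j}\big)\delta_{t>0}$, whose only potential singularity comes from the characteristic function $\delta_{t>0}$; here one uses that $\rho_j = \rho(r_j)$ is bounded below by a positive constant and is constant near the origin, so that $aw/\rho_j$, $cv_j/\rho_j$, $\rho_j^2$ and $h(\rho_j^2)$ are genuinely smooth on all of $M_j\times\R^2\times\R^{n_j}$, including at the origin. It then suffices to check smoothness of this single term near each point with $t=0$, and for that it is enough to see that it vanishes identically in a neighbourhood of such a point. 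If $r_j < 1$ at the point, then $\rho_j \le 1$ nearby and $\chi(\rho_j^2) = 0$. Otherwise $r_j \ge 1$, so $\rho_j = r_j$ near the point, and at $t=0$ one has $r_j^2 = w^2 + |v_j|^2$, whence, using $a \ge c$,
\[\Big(\tfrac{aw}{r_j}\Big)^2 + \Big|\tfrac{cv_j}{r_j}\Big|^2 = \frac{a^2w^2 + c^2|v_j|^2}{w^2 + |v_j|^2}\;\ge\;c^2 ,\]
so, $c$ having been chosen so that $\epsilon_j^x$ is supported in the ball of radius $c$, the rescaled argument lies outside that support; since the support is compact the same remains true in a whole neighbourhood, and the term vanishes identically near the point. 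This is word for word the argument of Lemma~\ref{lem:compact_support}, and concludes that $F_j$ is smooth.

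The remaining assertions are then formal. The map $\Pi_j(x;t,w,v_j) = (x;t,w,\pi_j(x;v_j))$ is smooth, since $\pi_j$ is linear in $v_j$ with coefficients that are products of the smooth functions $1-\kappa_i$; hence $F_j' = F_j\circ\Pi_j$ is a composite of smooth maps and is smooth. Finally $q_j$, pulled back to $M_j\times\R^2\times\R^{n_j}$, is smooth by its construction, so $F_j'' = \alpha^{-1}F_j' + q_j$ is smooth. The only step that requires any work is thus the crossing of the wall $\{t=0\}$ in the definition of $F_j$, which is handled exactly as in the untwisted case; everything else is bookkeeping.
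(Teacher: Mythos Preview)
Your proof is correct and follows exactly the paper's approach: invoke the untwisted computation (Lemma~\ref{lem:compact_support}) for the smoothness of $F_j$ across $\{t=0\}$, then observe that $\Pi_j$ and $q_j$ are smooth so that $F_j'$ and $F_j''$ follow formally. The only cosmetic point is that your case split ``$r_j<1$ at the point implies $\rho_j\le 1$ nearby'' is not literally true (nearby points may have $r_j>1$); the clean split is on $\rho_j<1$ versus $\rho_j\ge 1$, but since your second case handles the overlap this does not affect the conclusion.
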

\begin{proof}
  In Lemma~\ref{lem:compact_support} we saw that $F_j$ is smooth. Since $\Pi_j$ and $q_j$ are smooth, it follows that $F_j'$ and $F_j''$ are smooth.
\end{proof}

\begin{lemma}
For $x\in M'_{ij}$, we have $F''_j=F''_i \oplus  q_{ij}$.
\end{lemma}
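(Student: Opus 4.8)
The plan is to trace through the definitions and reduce the identity $F''_j = F''_i \oplus q_{ij}$ on $M'_{ij}$ to the analogous property already recorded for $G''_j$, namely that $G''_j(x;t,w,v_j) = G''_i(x;t,w,v_i) \oplus q_{ij}(x;v_{ij})$ for $x \in M'_{ij}$. The point is that $F''_j$ is built from $F'_j = F_j \circ \Pi_j$ in exactly the same way $G''_j$ is built from $G'_j = G_j \circ \Pi_j$ (divide by $\alpha$, add $q_j$), so it suffices to establish the restriction identity at the level of the $F'_j$, i.e. that $F'_j(x;t,w,v_i,v_{ij}) = F'_i(x;t,w,v_i) + q_{ij}(x;v_{ij})$ — in fact, since $q_{ij}$ gets absorbed into the $q_j$ term via the lemma $q_j = q_i + q_{ij}$, what we really need is the cleaner statement $F'_j(x;t,w,v_i,v_{ij}) = F'_i(x;t,w,v_i)$ for $x \in K_i$, paralleling the computation displayed for $G'_j$ just before the commented-out lemma.

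First I would recall the key property of the projections established in the text: for $i < j$ and $x \in M_j \cap K_i$, one has $\pi_j(x;v_i,v_{ij}) = (\pi_i(x;v_i),0)$, because $\kappa_i(x) = 1$ kills the $v_{ij}$-block and all later blocks. Consequently $\Pi_j(x;t,w,v_i,v_{ij}) = (x;t,w,\pi_i(x;v_i),0)$. Then I would plug this into the defining formula for $F_j$. The radial variable becomes $\rho_j$ evaluated at $(t,w,\pi_i(v_i),0)$, which is exactly $\rho_i$ evaluated at the corresponding point (the zero block contributes nothing to the norm); the quadratic-form part $g_j(x;\tfrac{c v_j}{\rho_j})$ restricts to $g_i(x;\tfrac{c v_i}{\rho_i}) + q_{ij}(x;0) = g_i(x;\tfrac{c v_i}{\rho_i})$ using $g_j = g_i \oplus q_{ij}$ and $q_{ij}(0) = 0$; and the error term $\epsilon_j(x;\tfrac{aw}{\rho_j},\tfrac{c v_j}{\rho_j})$ restricts, via $\epsilon_j = \chi_{n_j - n_i}(b^{-1}v_{ij})\,\epsilon_i$ with $v_{ij}$-entry equal to $0$ and $\chi_{n_j-n_i}(0) = 1$, to $\epsilon_i(x;\tfrac{aw}{\rho_i},\tfrac{c v_i}{\rho_i})$. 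Assembling these, $F_j(x;t,w,\pi_j(v_i,v_{ij})) = F_i(x;t,w,\pi_i(v_i))$, i.e. $F'_j(x;t,w,v_i,v_{ij}) = F'_i(x;t,w,v_i)$ for $x \in K_i$, exactly as in the displayed $G'$ computation.

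Finally I would conclude: for $x \in M'_{ij} \subset K_i$ (recall $K_i = \overline{M'_i}$ and, after possibly relabelling so that $i < j$, $M'_{ij} \subset M'_i \subset K_i$), we get
\[
F''_j(x;t,w,v_j) = \frac{1}{\alpha}F'_j(x;t,w,v_i,v_{ij}) + q_j(x;v_j) = \frac{1}{\alpha}F'_i(x;t,w,v_i) + q_i(x;v_i) + q_{ij}(x;v_{ij}),
\]
where the last equality uses the previous lemma $q_j = q_i \oplus q_{ij}$. The right-hand side is precisely $F''_i(x;t,w,v_i) \oplus q_{ij}(x;v_{ij})$, which is the claim. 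The argument is essentially bookkeeping, so I do not anticipate a serious obstacle; the one point requiring mild care is making sure the decomposition conventions $v_j = (v_i,v_{ij})$ are applied consistently and that the chain $M'_{ij} \subset K_i$ (needed to invoke $\kappa_i \equiv 1$) is correctly in force — this is why the statement is restricted to $M'_{ij}$ rather than all of $M_{ij}$.
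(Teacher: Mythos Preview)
Your proposal is correct and follows essentially the same approach as the paper's own proof, which is extremely terse (it says only that $\Pi_j=(\Pi_i,0)$ and $\rho_i=\rho_j$ for $x\in M'_{ij}$). You have simply unpacked these two observations in detail: the first gives $F'_j(x;t,w,v_i,v_{ij})=F'_i(x;t,w,v_i)$ via the computation with $g_j=g_i\oplus q_{ij}$, $q_{ij}(0)=0$, and $\chi_{n_j-n_i}(0)=1$, and then the identity $q_j=q_i\oplus q_{ij}$ finishes the job.
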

\begin{proof}
If $x\in M'_{ij}$, then $\Pi_j=(\Pi_i,0)$ and $\rho_i=\rho_j$. 
\end{proof}

\begin{lemma}\label{lem:C1bound}
The difference $F''^x_j-G''^x_j$ is globally $C^1$-bounded uniformly over $x\in M'_j$.
\end{lemma}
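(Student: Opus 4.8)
The plan is to reduce the claim to the untwisted comparison between $F_j$ and $G_j$ and then transport it through the linear maps $\Pi_j$. Unwinding the definitions $F_j''=\alpha^{-1}F_j'+q_j$, $G_j''=\alpha^{-1}G_j'+q_j$, $F_j'=F_j\circ\Pi_j$ and $G_j'=G_j\circ\Pi_j$, we obtain
\[
F_j''^x-G_j''^x=\frac1\alpha\,\bigl(F_j^x-G_j^x\bigr)\circ\Pi_j^x ,
\]
so, $\alpha>0$ being a fixed constant, it suffices to bound $\bigl(F_j^x-G_j^x\bigr)\circ\Pi_j^x$ in the $C^1$-norm on the fibre $\R^{2+n_j}$, uniformly in $x\in M_j'$.

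First I would observe that $F_j^x$ and $G_j^x$ agree outside the closed ball of radius $\sqrt3$ in the coordinates $(t,w,v_j)$: when $r_j\ge\sqrt3>1$ one has $\rho_j=r_j$, hence $h(\rho_j^2)=0$ and $\chi(\rho_j^2)=1$, and the two defining formulas coincide. Thus $F_j^x-G_j^x$ is supported in $\{r_j\le\sqrt3\}$. On that region $F_j$ is smooth (shown in the preceding lemma, cf.\ Lemma~\ref{lem:compact_support}) and $G_j$ is of class $C^1$: on each fibre $G_j^x$ is homogeneous of degree $2$ and smooth away from the origin --- the potential discontinuity across $\{t=0\}$ is absent because $a\ge c$ --- so its fibrewise gradient is homogeneous of degree $1$ and in particular continuous at the origin with value $0$, which settles the only point of the fibre where $G_j$ fails to be smooth. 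Since $\overline{M_j'}=K_j$ is a compact subset of $M_j$, the function $F_j^x-G_j^x$ and its fibrewise gradient are bounded in absolute value by a constant $C_j$ on the compact set $K_j\times\{r_j\le\sqrt3\}$, uniformly in $x\in M_j'$, and therefore, being zero outside that set, on all of $K_j\times\R^{2+n_j}$.

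Finally I would use that, in the fibre coordinates, $\Pi_j^x$ is the linear self-map of $\R^{2+n_j}$ fixing the $(t,w)$-plane and rescaling each block $v_{m-1,m}$ ($1\le m\le j$) by the factor $\prod_{1\le i<m}(1-\kappa_i(x))\in[0,1]$; in particular $\Pi_j^x$ is symmetric with $\|\Pi_j^x\|\le1$. Precomposition with such a map keeps the values of $F_j^x-G_j^x$ in the range $[-C_j,C_j]$ and, by the chain rule $\nabla(\psi\circ\Pi_j^x)(p)=(\Pi_j^x)^{*}\,\nabla\psi(\Pi_j^x p)$, keeps the fibrewise gradient bounded in norm by $C_j$. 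Hence $\bigl|(F_j^x-G_j^x)\circ\Pi_j^x\bigr|\le C_j$ and $\bigl\|\nabla\bigl((F_j^x-G_j^x)\circ\Pi_j^x\bigr)\bigr\|\le C_j$ on all of $\R^{2+n_j}$, uniformly in $x\in M_j'$; dividing by $\alpha$ gives the asserted uniform $C^1$-bound on $F_j''^x-G_j''^x$. I do not anticipate a real obstacle here: the only mild care needed is the regularity of $G_j$ at the origin of the fibre, handled above, and the bookkeeping that $\Pi_j^x$ is a contraction, so that it never worsens the estimates.
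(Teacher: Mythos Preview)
Your proof is correct and follows essentially the same approach as the paper: reduce to $\alpha(F_j''-G_j'')=(F_j-G_j)\circ\Pi_j$, use that $F_j-G_j$ has compact support on $K_j\times\R^{2+n_j}$, and that $\|d\Pi_j^x\|\le 1$. You supply more detail than the paper does, in particular the explicit support region $\{r_j\le\sqrt3\}$ and the verification that $G_j^x$ is $C^1$ at the origin, but the structure of the argument is the same.
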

\begin{proof}
We have
\[\alpha(F_j''-G_j'')=F_j'-G_j'=(F_j-G_j)\circ \Pi_j.\]
But $F_j-G_j$ is compactly supported on $K_j\times \R^{2+n_j}$ and $|d\Pi_j^x|\leq 1$ for all $x\in M_j$,
so $F_j''^x-G_j''^x$ is $C^1$-bounded uniformly over $x\in M'_j \subset K_j$.
\end{proof}

\begin{proposition}
If the twisted generating function $(M_i,b,f_i,q_{ij})$ transversely generates a Legendrian submanifold $L$ when restricted to $\{f_i<0\}$,
then, for $\alpha>0$ small enough, the twisted generating function $(M_i',F''_i, q_{ij})$ generates transversely a Legendrian submanifold $L''$
which is Legendrian isotopic to $L$.
\end{proposition}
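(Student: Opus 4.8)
The plan is to run the argument of Proposition~\ref{prop:quadratic_generating_function} one chart at a time, reducing each fiberwise statement to the untwisted case by the local device already used in the preceding lemmas: given $x\in M$, if $i$ is minimal with $x\in K_i$ then on a neighbourhood of $x$ the map $\Pi_i$ is a linear diffeomorphism and, for every $j>i$, $F''_j=F''_i\oplus q_{ij}$ with $q_{ij}$ nondegenerate. Granting this reduction, note first that $(M'_i,F''_i,q_{ij})$ is a genuine almost quadratic twisted generating function as soon as each $\nabla F''^x_i$ is transverse to $0$ away from the origin: each $F''_i$ is fiberwise almost quadratic (its homogeneous part is $G''_i$, which is transverse to $0$ away from the origin for $\alpha$ small by the lemma above, and $F''^x_i-G''^x_i$ has bounded gradient by Lemma~\ref{lem:C1bound}), the gluing $F''_j=F''_i\oplus q_{ij}$ has been checked, and the cocycle relation $q_{ij}\oplus q_{jk}=q_{ik}$ is inherited from the linear data.

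For transversality, fix $j$ and $x\in K_j$ and take $i\le j$ minimal with $x\in K_i$. By the local reduction it suffices to treat $F''^x_i=\alpha^{-1}(F^x_i\circ\Pi^x_i)+q^x_i$; precomposing with the linear isomorphism $(\Pi^x_i)^{-1}$ and rescaling, this amounts to the transversality of $\nabla(F^x_i+\alpha\widetilde q^{\,x})$ to $0$ on $\R^{2+n_i}\setminus\{0\}$, where $\widetilde q^{\,x}=q^x_i\circ(\Pi^x_i)^{-1}$ is a quadratic form. Outside a ball this function is homogeneous of degree $2$ and equals $G^x_i+\alpha\widetilde q^{\,x}$, whose restriction to the unit sphere is $C^1$-close to that of $G^x_i$, transverse to $0$ by Corollary~\ref{cor: const}; on the complementary ball it is $C^1$-close to $F^x_i$, and $\nabla F^x_i$ is transverse to $0$ there since, by the proof of Proposition~\ref{prop:quadratic_generating_function}, every critical point of $F^x_i$ is nondegenerate and lies in the compact annulus $\{\sqrt2\le r\le\sqrt3\}$ while $\nabla F^x_i$ is nonvanishing on the rest of the ball. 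Transversality to $0$ on each of these two regions is stable under small $C^1$ perturbations, so the claim holds for $\alpha$ small, and since $M$ is covered by the finitely many compact $K_j$ a single $\alpha_0$ works uniformly. Revisiting the regional estimates of Proposition~\ref{prop:quadratic_generating_function} one checks that $\alpha\widetilde q^{\,x}$ cannot introduce critical points: in the annulus $\{1\le r^2\le2,\ t>0\}$ the relevant lower bound comes from $\alpha^{-1}h'\equiv\alpha^{-1}/3$, which dominates the bounded contribution of $q^x_i$, and the lower bound for $\partial_w F''^x_i$ in the cone $\{t\le\delta r\}$ survives for the same reason.

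It remains to see that the resulting Legendrian $L''$ is Legendrian isotopic to $L$. The same local reduction matches the negative-value critical points of $F''_i$ with those of $f_i$ and, running the closing computation of Proposition~\ref{prop:quadratic_generating_function}, exhibits over each $M'_i$ the Legendrian generated by $F''_i$ as the image of $L\cap J^1M'_i$ under an explicit contactomorphism built from the contactomorphism $(x,p,z)\mapsto(x,\varphi'_a(z)p,\varphi_a(z))$ of that proposition, the conformal rescaling $(x,p,z)\mapsto(x,\alpha^{-1}p,\alpha^{-1}z)$, and the operation accounting for the added forms $q^x_i$ --- all inserted only to enforce the gluing, and none altering the generated Legendrian up to Legendrian isotopy. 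The crucial point is that the relation $F''_j=F''_i\oplus q_{ij}$ is an \emph{exact} equality, with no cut-off functions intervening (in contrast to the linear-at-infinity setting), so these chart-wise contactomorphisms agree on overlaps, their transition data being precisely the $q_{ij}$, and therefore patch to a global contactomorphism $\Phi$ of $J^1M$ with $\Phi(L)=L''$. Deforming the parameters of the construction back to a trivial state --- $\varphi_a$ to the identity, $\alpha^{-1}$ to $1$, and each $q^x_i$ linearly to $0$ --- then produces a Legendrian isotopy from $L''$ to $L$; transversality persists along the deformation, as it used only that $a$ is large and $\alpha$ small, and shrinking the $q^x_i$ only shrinks the perturbation.

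The step I expect to be the main obstacle is the \emph{uniform} exclusion of spurious critical points of the $F''_j$ across all charts at once. Locally each $F''_j$ is a small perturbation of the untwisted picture, but the size of this perturbation is governed by $\alpha$ whereas the relevant scale inside the ball is governed by $a$, so the two choices must be made in the right order --- fix $a$ large enough for Corollary~\ref{cor: const} on \emph{every} $K_j$, then shrink $\alpha$ using compactness of $M$ --- and one must verify that the projections $\Pi_j$, which in general merely contract rather than preserve the fiber norm, together with the correction forms $q_j$, do not create critical points in the region where $F''_j$ has already ceased to coincide with $G''_j$. This is exactly where the bookkeeping with the cut-offs $\kappa_i$ and the nested subsets $M'_i\subset K_i\subset M_i$ does the real work.
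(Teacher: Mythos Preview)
Your transversality argument is essentially the paper's: reduce to the minimal chart $i$ with $x\in K_i$, observe that $\alpha F''_i = F'_i + \alpha q_i$ with $F'_i = F_i\circ\Pi_i$ and $\Pi_i^x$ a linear isomorphism, bound the gradient from below at infinity using $|\nabla G'^x_i|\ge\beta r_i$ and Lemma~\ref{lem:C1bound}, and on the remaining compact set use that $\alpha F''_i$ is $C^2$-close (not merely $C^1$-close, as you write --- transversality of $\nabla F$ to $0$ is a $C^2$ condition on $F$) to $F'_i$, whose fiberwise gradient is transverse to $0$ since $F_i$ is a generating function by the untwisted proposition.

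The Legendrian isotopy step has a genuine gap. You claim the Legendrian generated by $F''_i$ is the image of $L\cap J^1M'_i$ under an \emph{explicit contactomorphism} assembled from $\varphi_a$, the $\alpha^{-1}$-rescaling, and an ``operation accounting for the added forms $q^x_i$''. The first two are global contactomorphisms, but the third is not: adding the (generally degenerate, $x$-dependent) quadratic form $q_i$ to $\alpha^{-1}F'_i$ genuinely moves the fiberwise critical locus and hence the Legendrian, and there is no contactomorphism realising this. So there are no chart-wise contactomorphisms to patch. Your fallback --- deforming $\alpha^{-1}\to 1$ and $q_i\to 0$ linearly --- is also broken: sending $\alpha$ to $1$ destroys the transversality you just established, and replacing $q_i$ by $sq_i$ ruins the gluing, since one then gets $F''_j = F''_i + sq_{ij}$ rather than $F''_i\oplus q_{ij}$, so the family does not stay within twisted generating functions with the given twist data.

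The paper avoids this by a $C^1$-closeness argument rather than an explicit contactomorphism. One considers the twisted generating function $(\alpha F''_i,\alpha q_{ij})$: on the minimal chart $\alpha F''_i = F'_i + \alpha q_i$ is $C^2$-close to $F'_i$ on the compact set containing all critical points, and $F'_i$ (via the fiberwise diffeomorphism $\Pi_i^x$) generates the same Legendrian $L'$ as $F_i$. Hence the Legendrian $L''_\alpha$ generated by $(\alpha F''_i,\alpha q_{ij})$ is $C^1$-close to $L'$ and therefore Legendrian isotopic to it. Finally the global rescaling $(x,p,z)\mapsto(x,\alpha^{-1}p,\alpha^{-1}z)$ takes $L''_\alpha$ to $L''$. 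The chain is $L\sim L'\sim L''_\alpha\sim L''$, and only the outer links are explicit contactomorphisms; the middle one is the $C^1$-closeness step your argument is missing.
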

\begin{proof}
Recall that the parameter $a>0$ has been chosen large enough so that each function $F_i$ is a generating
function over $M'_i$ and altogether they generate a Legendrian submanifold $L'=\phi_a(L)$.
Due to the projections $\Pi_i$, the functions $F'_i$ are not generating functions (the transversality hypothesis fails)
but we will prove that $F''_i$ is a generating function and that they altogether generate a Legendrian $L''$
which is $C^1$-close to $L'$ if $\alpha>0$ is small enough.

Fix $j$ and $x\in K_j$. Pick $i$ minimal such that $x\in K_i$. We have $\Pi_j^x=(\Pi_i^x,0)$
and for $y$ near $x$, $\Pi_i^y$ is a (linear) diffeomorphism.
For $y$ near $x$, $F_j''^y=F_i''^y\oplus q^y_{ij}$ so $F_j''$ is a generating function
over a neighborhood of $x$ if and only if $F_i''$ is, and if so they generate the same Legendrian.

Since $G^y_i$ vanishes transversely away from the origin and $\Pi_i^y$ is a diffeomorphism, we have $|\nabla G_i'^y|\geq \beta r_i$ for some $\beta >0$.
From Lemma~\ref{lem:C1bound}, we conclude that $|\nabla F_i'^y|\geq \beta r_i - \gamma$ for some $\gamma >0$.
Since $q_i$ is a quadratic form $|\nabla q^y_i| \leq \delta r_i$ for some $\delta>0$ and therefore 
\[|\alpha\nabla F_i''^y|\geq (\beta -\alpha \delta) r_i - \gamma>0\]
outside of a compact set $C_i\subset \R^{2+n_i}$ for $\alpha$ small enough, and this is all valid over
a neighborhood of $x$. 
Note now that $\alpha F_i''=F'_i+ \alpha q_i$ is arbitrarily $C^2$-close to $F'_i$
on $C_i$ as $\alpha$ goes to $0$. So $F_i''$ is a generating function over a neighborhood of $x$ for small enough $\alpha$.
By compactness of $K_j$, this shows that $F''_j$ is a generating function over $M'_j$
for all $j$ provided $\alpha$ is small enough. Moreover, the Legendrian submanifold generated by the twisted generating function
$(\alpha F''_j, \alpha q_{ij})$ is $C^1$-close to $L'$ as $\alpha$ goes to $0$ and hence is Legendrian isotopic
to $L'$. Finally the Legendrian $L''$ generated by $(F''_j,q_{ij})$ is Legendrian isotopic to the one generated by $(\alpha F''_j,\alpha q_{ij})$,
so $L''$ is Legendrian isotopic to $L'$ and to $L$.
\end{proof}

This completes the proof of Proposition \ref{prop:linear_to_quadratic_twisted}. The upshot of the above discussion is the following:

\begin{corollary}\label{cor:existence-quad-gf}
Let $M$ be a closed connected manifold and let $L \subset T^*M$ be a closed exact Lagrangian submanifold.
Then there exists a Legendrian submanifold $L'$ that is Legendrian isotopic to (any Legendrian lift of) 
$L$ and which admits an almost quadratic twisted generating function of tube type.
\end{corollary}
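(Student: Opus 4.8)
The plan is to simply assemble Corollary~\ref{cor:existence-quad-gf} from the two main existence theorems already established in the preceding text, so the proof is essentially a two-line deduction. First I would invoke Theorem~\ref{thm:acgk}: since $M$ is closed and connected and $L \subset T^*M$ is a closed exact Lagrangian submanifold, a Legendrian lift of $L$ inside $J^1 M$ admits a twisted generating function linear at infinity of tube type, say with data $(n_i, b, f_i, q_{ij})$ relative to some directed open cover $(M_i)_{i\in I}$. Since $M$ is closed we may take this cover finite and each $M_i$ with compact closure.

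Next I would apply Proposition~\ref{prop:linear_to_quadratic_twisted} to this data. The hypotheses are met: $(n_i,b,f_i,q_{ij})$ is a linear at infinity twisted generating function over a cover of $M$ by opens with compact closure, and by the defining property of ``generated by'' (and the fact that its fiberwise singular sets of negative value define a Legendrian embedding with image the given Legendrian lift of $L$) it transversely generates that Legendrian when restricted to $\{f_i<0\}$. The proposition then produces an almost quadratic twisted generating function $(n_i, F_i, q_{ij})$ over a refined cover $(N_i)_{i\in I}$ with $N_i \subset M_i$, whose generated Legendrian $L'$ is Legendrian isotopic to $L$. Moreover, the ``tube type'' clause of Proposition~\ref{prop:linear_to_quadratic_twisted} applies precisely because the linear generating function we started with was of tube type, so $F_i$ may be taken of tube type, i.e.\ the almost quadratic twisted generating function is of tube type in the sense of the relevant definition.

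Finally I would note that $L'$, being Legendrian isotopic to the chosen Legendrian lift of $L$, is Legendrian isotopic to any Legendrian lift of $L$, since any two Legendrian lifts of the same exact Lagrangian differ by the $\R$-translation action on $J^1 M$ and are thus Legendrian isotopic. This yields the statement exactly as phrased.

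There is no real obstacle here: all the analytic content---the explicit construction passing from functions linear at infinity to almost quadratic functions, the transversality and tube-type checks---has already been carried out in Sections \ref{sec:almost-quadr-twist} through the proof of Proposition~\ref{prop:linear_to_quadratic_twisted}. The only point requiring a word of care is matching the hypothesis ``generates a Legendrian submanifold when restricted to $\{f_i<0\}$'' in Proposition~\ref{prop:linear_to_quadratic_twisted} with the conclusion of Theorem~\ref{thm:acgk}, which is immediate from the definition of ``generated by a twisted generating function linear at infinity of tube type.''

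\begin{proof}
By Theorem~\ref{thm:acgk}, a Legendrian lift of $L$ in $J^1M$ admits a twisted generating function linear at infinity of tube type $(n_i,b,f_i,q_{ij})$, defined relative to a directed open cover $(M_i)_{i\in I}$ of $M$; since $M$ is closed we may take the cover finite and each $M_i$ with compact closure. By definition of being generated by such data, the fiberwise singular sets of negative value transversely generate this Legendrian lift when restricted to $\{f_i<0\}$. Applying Proposition~\ref{prop:linear_to_quadratic_twisted} produces an almost quadratic twisted generating function $(n_i,F_i,q_{ij})$ over a cover $(N_i)_{i\in I}$ with $N_i\subset M_i$, whose generated Legendrian submanifold $L'$ is Legendrian isotopic to the given lift of $L$; and since $(n_i,b,f_i,q_{ij})$ is of tube type, the proposition allows us to take each $F_i$ of tube type. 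As any two Legendrian lifts of $L$ are Legendrian isotopic, $L'$ is Legendrian isotopic to any Legendrian lift of $L$.
\end{proof}
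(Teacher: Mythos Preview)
Your proof is correct and follows exactly the same approach as the paper, which simply says the result follows from Theorem~\ref{thm:acgk} and Proposition~\ref{prop:linear_to_quadratic_twisted}. Your additional remarks about the finiteness of the cover, the compact closures, and the fact that any two Legendrian lifts of $L$ are Legendrian isotopic are all accurate and make explicit what the paper leaves implicit.
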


\begin{proof}
This follows from Theorem \ref{thm:acgk} and Proposition \ref{prop:linear_to_quadratic_twisted}.
\end{proof}

\begin{remark}
In \cite{ACGK}, it is proved that twisted generating functions linear at infinity satisfy the homotopy lifting property, namely they persist under Legendrian isotopy (Chekanov-Sikorav theorem). The same proof works in the almost quadratic setting and so $L$ itself admits an almost quadratic twisted generating function.
We do not expand this part as the weaker corollary above is enough for our purposes since $L'$ and $L$ have the same
normal invariant.
\end{remark}

\subsection{Twisted derivative and normal invariant}\label{subsec:twistedder}

We now return to the constructions of Section \ref{sec:almost-quadr-twist}, which we now interpret as maps to appropriate spaces of functions on $\R^n$. We begin by adding a tube-type condition to Definition \ref{def:non-deg-quadratic}:

\begin{definition} Let $T^q_n$ be the space of \emph{non-degenerate quadratic functions of tube type} on $\R^n$, namely $C^1$-functions $G:\R^n \to \R$ such that:
\begin{enumerate} 
\item $G$ is quadratic (in the weak sense that $G(\lambda v)=\lambda^2G(v)$ for $\lambda \geq 0$, $v\in \R^n$), 
\item $\nabla G (v)\neq 0$  for $v\in\R^n\setminus\{0\}$, and
\item $G$ is homotopic to a non-degenerate quadratic form among such functions.
\end{enumerate}
We topologize $T^q_n$ with the $C^1$-topology (weak and strong are equivalent here due to homogeneity).
\end{definition}

Next, we describe the space of functions which corresponds to Definition \ref{def:almostquadfun}:
\begin{definition}
The space of \emph{almost quadratic functions of tube type} on $\R^n$, denoted $T^{aq}_n$ is the space of pairs $(F,G)$ where $G\in T^q_n$ and $F:\R^n\to \R$ is $C^1$
and $\| \nabla (F-G)(v)\| \leq c$ for some constant $c>0$. 
\end{definition}
As discussed in Remark \ref{def:quadratic_function_determined_by_almost}, the function $G$ is in fact determined by $F$, but we find this description more convenient to describe the topology: we topologize $T^{aq}_n$ with the $C^1$-topology on $G\in T^q_n$, the weak $C^1$-topology on $F$
and in such a way that a constant $c>0$ can be found locally near a pair $(F,G)$.
Note that we have continuous maps $i:T^q_n\to T^{aq}_n$ via $G\mapsto (G,G)$
and $p:T^{aq}_n\to T^q_n$ via $(F,G)\mapsto G$. We have $p\circ i=\id$ and the path $((1-t)F+tG,G)$, $t\in [0,1]$,
defines a homotopy between $\id$ and $i\circ p$. By construction, if $F=G+E:M\times \R^n\to \R$
is an almost quadratic generating function of tube type in the sense of Definition \ref{def:almostquadfun}, then $x\mapsto(F^x,G^x)$ defines a continuous map
$M\to T^{aq}_n$.

We equip $\T^q=\coprod_{n \geq0} T^q_n$ and $\T^{aq}=\coprod_{n \geq0} T^{aq}_n$ with the monoid structure given by direct sum.
The maps $p$ and $i$ are compatible with this monoid structure.

\begin{definition}\label{def: derivative}
The \emph{(Waldhausen)  derivative} is the map $T^{aq}_n \to G_n$ given by $(F,G) \mapsto \nabla F$, where we compactify $\bR^n$ at infinity and use the conditions on $F$ and $G$ to conclude that $\nabla F$ is proper, and the tube condition to see the resulting map is a homotopy equivalence.
\end{definition}

Our model for Waldhausen's rigid tube map is the inclusion $\QQ \to \T^{aq}$ given by $q \mapsto (q,q)$. Note that if $q(v)=\frac{1}{2}(\|v_E\|^2 - \|v_{E^\perp} \|^2)$ for $v=v_E+v_{E^\perp}$
under the orthogonal decomposition $E \oplus E^\perp \subset \bR^n$, then $\nabla q$ is the orthogonal reflection about the vector subspace $E$. This fits into a strictly commutative diagram of monoids

\begin{center}
\begin{tikzcd}
\T^{aq} \arrow[r, "\nabla"] & \GG  \\
\QQ \arrow[u] \arrow[r, "\nabla"] &\OO \arrow[u] . \\
\end{tikzcd}
\end{center}

We therefore may define:

\begin{definition}\label{def: twisted derivative}
The \emph{twisted derivative} is the induced map  $B(\T^{aq}, \QQ) \to B( \GG, \OO)$. 
\end{definition}

\begin{remark}\label{rem:deriv-quad}
As explained above, there is a deformation retraction through monoid maps of $\T^{aq}$ onto $\T^q$, so we may also think of the derivative as the map $\T^q \to \GG$ given by $G \mapsto \nabla G$, and of the  twisted derivative as the induced map $B(\T^q, \QQ) \to B( \GG, \OO)$.
\end{remark}

By the appendix of \cite{ACGK} which compares equivalence classes of twisted maps to homotopy classes of maps to geometric realisations of $2$-sided bar constructions, an almost quadratic twisted generating function of tube type for a Legendrian $L \subset J^1 M$ gives a map $M \to B(\T^{aq}, \QQ)$ which is well defined up to homotopy.

\begin{proposition}\label{prop:normal-deriv}
Let $M$ be a closed connected manifold and let $L \to J^1M$ be a Legendrian embedding which admits an almost quadratic twisted generating function of tube type classified by a map $M \to B(\T^{aq}, \QQ)$ and such that the projection $L \to M$ is a homotopy equivalence. Then the normal invariant of $L\to M$ is represented by
the composition 
\[M \to B(\T^{aq}, \QQ) \xrightarrow{\nabla} B( \GG, \OO) \xrightarrow{c} B(G,O).\]
\end{proposition}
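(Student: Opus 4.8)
The plan is to unwind the twisted derivative map $B(\T^{aq},\QQ) \xrightarrow{\nabla} B(\GG,\OO)$ at the level of twisting data and then invoke Proposition~\ref{prop:recognizeNI}, which characterizes the normal invariant in exactly such terms. First I would fix an almost quadratic twisted generating function of tube type $(n_i, F_i, q_{ij})$ representing the given homotopy class $M \to B(\T^{aq},\QQ)$; writing $F_i = G_i + E_i$, the assignment $x \mapsto (F_i^x, G_i^x)$ gives the maps $M_i \to T^{aq}_{n_i}$, and the forms $q_{ij}$ give the transition data $M_{ij} \to Q_{n_j - n_i}$. Applying the monoid map $\nabla\colon \T^{aq} \to \GG$ fiberwise produces the $\OO$-twisted map $(g_i, o_{ij})$ from $M$ to $\GG$ with $g_i(x) = \nabla F_i^x \in G_{n_i}$ and $o_{ij}(x) = \nabla q_{ij}^x \in O_{n_j - n_i}$; here we use (as recorded in Definition~\ref{def: derivative}) that the almost quadratic tube-type condition forces $\nabla F_i^x$ to be a proper self-map of $S^{n_i}$ which is a homotopy equivalence, so this is genuinely a well-defined $\OO$-twisted map. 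The compatibility $F_j = F_i \oplus q_{ij}$ over $M_{ij}$ gives $\nabla F_j^x = \nabla F_i^x \oplus \nabla q_{ij}^x$, i.e. $g_j = g_i \oplus o_{ij}$ in the sense needed for an $\OO$-twisted map, and the cocycle condition on the $q_{ij}$ transfers to the cocycle condition on the $o_{ij}$, so the strictly commutative square of monoids displayed before Definition~\ref{def: twisted derivative} guarantees everything glues.

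Next I would verify the hypotheses of Proposition~\ref{prop:recognizeNI} for this twisted map $(g_i, o_{ij})$. The key point is that the zero-set $g_i^{-1}(0) = (\nabla F_i^x)^{-1}(0) \subset \R^{n_i}$ is, fiberwise, the critical locus of $F_i^x$, and since $F_i$ is a generating function the fiberwise gradient $\nabla F_i^x$ is transverse to $0$, so each $g_i$ is smooth near $g_i^{-1}(0)$ and transverse to the origin there. Moreover the critical locus of an almost quadratic generating function of tube type is compact — away from a compact set $F_i^x$ agrees with its nondegenerate quadratic function $G_i^x$, which has no critical points off the origin, and the origin is a (degenerate, nonzero-value-unless-... — actually value $0$) point that is excluded once one remembers $G^x_a$ has only the single degenerate critical point at $0$; more carefully the tube condition plus the defining construction ensure the relevant zero-set is the genuine critical set — so the union $\bigcup_i g_i^{-1}(0)$ is a closed manifold over $M$. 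By the definition of "generated by a twisted generating function," this union, together with its Legendrian lift, recovers $L$ (up to Legendrian isotopy, which does not affect the diffeomorphism type of the critical-set manifold or the factoring of its projection to $M$), and the hypothesis that $L \to M$ is a homotopy equivalence provides the diffeomorphism $L \to \bigcup_i g_i^{-1}(0)$ through which the projection factors.

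With those hypotheses in place, Proposition~\ref{prop:recognizeNI} says precisely that the class in $[M, B(\GG,\OO)]$ classifying $(g_i, o_{ij})$ represents the normal invariant of $L \to M$ under $c\colon B(\GG,\OO) \to B(G,O)$. Since $(g_i, o_{ij})$ is, by construction, the image of the given twisting data under the simplicial map $B_\bullet(\T^{aq},\QQ) \to B_\bullet(\GG,\OO)$ induced by $\nabla$, the class it classifies is exactly the image of $[M \to B(\T^{aq},\QQ)]$ under $B(\nabla)$, using Proposition~\ref{prop:homotopy_class_twsited} to translate between twisted maps and homotopy classes of maps to the bar constructions. Composing with $c$ then gives the asserted factorization $M \to B(\T^{aq},\QQ) \xrightarrow{\nabla} B(\GG,\OO) \xrightarrow{c} B(G,O)$.

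The main obstacle I anticipate is the bookkeeping around the critical set versus the zero-set of $\nabla F_i$, and in particular handling the degenerate critical point at the origin of the quadratic model: one must check that in the almost quadratic tube-type model the fiberwise gradient $\nabla F^x_i$ genuinely vanishes transversely on the set that one wants to be $L$, with no spurious contribution from the origin, and that this holds compatibly across the cover so that the glued zero-set is a closed manifold mapping to $M$ by a map factoring the homotopy equivalence $\pi$. This is where the construction in Section~\ref{sec:almost-quadr-twist} (the functions $F_a$, with critical points identified with $\crit(f^x) \cap \{f^x < 0\}$ away from the ball where $F_a$ is genuinely quadratic with a single degenerate zero at the origin) has to be invoked carefully; once one trusts that construction and Corollary~\ref{cor:existence-quad-gf}, the rest is a formal consequence of Proposition~\ref{prop:recognizeNI} and the definitions.
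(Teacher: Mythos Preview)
Your approach is correct and essentially identical to the paper's: unwind the twisted derivative on the twisting data $(n_i, F_i, q_{ij}) \mapsto (\nabla F_i, \nabla q_{ij})$ and invoke Proposition~\ref{prop:recognizeNI}. The worries in your final paragraph are misplaced, since the hypotheses of the proposition already give you everything: by definition an almost quadratic generating function $F_i$ is smooth with $\nabla F_i^x$ transverse to $0$ (the degenerate origin is a feature of the quadratic part $G_i$, not of $F_i$), and the assumption that the twisted generating function \emph{generates} $L$ means precisely that $L$ is the glued critical locus with its projection to $M$, so no Legendrian isotopy or separate compactness argument is needed.
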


\begin{proof}
  Let $(n_i,F_i,q_{ij})$ be the twisted generating function of tube type representing the map $M \to B(\T^{aq}, \QQ)$ and generating $L$. The twisted derivative map sends this to the $\OO$-twisted map to $\GG$ given by $(\nabla F_i, \nabla q_{ij})$. That $(n_i,F_i,q_{ij})$ generates $L$ precisely means that Proposition~\ref{prop:recognizeNI} applies.
\end{proof}

We can now prove our first main result from the introduction:
\begin{proof}[Proof of Theorem \ref{thm:main derivative}]
Let $L \subset T^*M$ be a nearby Lagrangian. It follows from Corollary \ref{cor:existence-quad-gf} and Proposition \ref{prop:normal-deriv} that the normal invariant of the projection $L \to M$ factors as a map $M \to B(\T^{aq} , \QQ) \to B(\GG, \OO) \to B(G,O)$ where $B(\T^{aq} , \QQ) \to B(\GG, \OO)$ is the twisted derivative. This can be lifted to $B(\T^q,\QQ)$ by Remark \ref{rem:deriv-quad}. So we obtain the desired factoring $M \to B(\T,\QQ) \to B(G,O)$ for either model $\T=\T^{aq}$ or $\T=\T^q$.
\end{proof}

\section{Factorization of the twisted derivative}\label{sec:factorization}

In this section we prove Theorem~\ref{thm:main duality} and Theorem~\ref{thm:main torsion}, namely we
factor the twisted derivative $B(\T,\QQ)\to B(\GG,\OO)\to B(G,O)=G/O$
through a twisted $S$-duality map $B(G/O) \to G/O$, and show that the latter is 2-torsion.
Our model for this map will be given in terms of the space $\T^\TOP$ of topological tubes, which is a model for $BG$.

From now on we implicitly replace all previous spaces (and monoids) with their singular simplicial set and entirely work with simplicial sets. We will nevertheless mostly suppress this from the notation, relying on the homotopy equivalence $|\sing_\bullet X| \simeq X$ and the fact that this equivalence is compatible with taking bar constructions in the sense that there are natural homotopy equivalences
\begin{align*}
  |B(\sing_\bullet X,\sing_\bullet A)| \simeq |B(|\sing_\bullet X|,|\sing_\bullet A|)| \simeq |B(X,A)|
\end{align*}
for a topological monoid $A$ acting on a space $X$ (if both are well-pointed and of CW homotopy type, which all our spaces are). We will even call some simplicial sets spaces as the represent spaces through their geometric realization. By a homotopy equivalence of simplicial sets we will always mean a homotopy equivalence after geometric realization, which in other text is often called a weak equivalence.


\subsection{Topological tubes as a model for $BG$}

Denote by $q_{k,l}$ the quadratic form on $\R^{k+l}$ given by the formula 
\[q_{k,l}(v_-,v_+)=\frac{1}{2}(-\|v_-\|^2+\|v_+\|^2)\]
for $(v_-,v_+)\in \R^k\times \R^l$.
For the next definition, we choose natural numbers $k$ and $l$ and set $n=k+l$.
\begin{definition}\label{def:Ttop}
The \emph{topological tube space  $T^\TOP_{k,l}$} is the simplicial set whose $p$-simplices are continuous functions $f:\Delta^p \times \R^n\to \R$ which satisfy the following properties:
\begin{enumerate}
\item $f$ is fiberwise quadratic: $\forall v \in \R^n, \forall \lambda \geq 0, \forall s\in \Delta^p, f(s,\lambda v)=\lambda^2 f(s,v)$, and
\item there exists an isotopy of homeomorphisms 
\[(\varphi^t)_{t\in[0,1]}:\Delta^p\times \R^n\to \Delta^p\times \R^n\]
such that
\begin{itemize}
\item $\varphi^t(s,v)=(s,\psi^t(s,v))$,
\item $\psi^t(s,\lambda v)=\lambda\psi^t(s,v)$ for all $\lambda \geq 0$,
\item $\varphi^0=\id$,
\item $f(\varphi^1(s,v))=q_{k,l}(v)$.
\end{itemize}
\end{enumerate}
The face and degeneracy maps are defined by the obvious pull backs. 
\end{definition}

The space $T^\TOP_{k,l}$ is by definition connected. We define $T^\TOP_n=\coprod_{k+l=n} T^\TOP_{k,l}$ and $\T^\TOP=\coprod_n T^\TOP_n$
which is evidently a monoid under direct sum, and $\QQ$ (or rather $\sing_\bullet \QQ$) is a sub-monoid. We also define the colimit
\[\T^\TOP_\infty=\colim(\T^\TOP \to \T^\TOP \to \cdots)\]
under the left action by $q_{1,1}(v_-,v_+)=\frac{1}{2}(-v_-^2+v_+^2)$. The connected components of $\T^\TOP_\infty$ are classified by the map 
\[(\sigma,d):\T^\TOP_\infty \to \Z^2\] which projects $T_{k,l}^\TOP$, in place $i\in \N$ of the colimit, to $(k-l,k+l-2i)$.

\begin{proposition}
For all $k,l\in \N$, $T_{k,l}^\TOP$ is a Kan complex. In particular $\T^\TOP_\infty$ is a Kan complex.
\end{proposition}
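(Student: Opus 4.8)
The statement to prove is that each $T^\TOP_{k,l}$ is a Kan complex, and consequently so is the colimit $\T^\TOP_\infty$. The plan is to verify the Kan (horn-filling) condition directly from the definition of $T^\TOP_{k,l}$ as a simplicial set of fiberwise-quadratic functions admitting a fiberwise-linear trivialization.

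First I would unpack what a horn $\Lambda^p_j \to T^\TOP_{k,l}$ is: a compatible family of continuous fiberwise-quadratic functions on $(\text{face}_i \Delta^p) \times \R^n$ for $i \neq j$, each admitting the required isotopy of fiberwise-homogeneous homeomorphisms. Since $\Lambda^p_j$ is a deformation retract of $\Delta^p$ (indeed there is a retraction $\Delta^p \to \Lambda^p_j$, and $|\Lambda^p_j|$ includes into $|\Delta^p|$ as a retract), one can extend the function on $|\Lambda^p_j| \times \R^n$ to a continuous fiberwise-quadratic function on $\Delta^p \times \R^n$ simply by precomposing with (the fiberwise identity over) such a retraction $\rho\co \Delta^p \to \Lambda^p_j$; fiberwise quadraticity is preserved because $\rho$ only affects the $\Delta^p$-coordinate. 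The only real content is then to produce the trivializing isotopy $(\varphi^t)$ over all of $\Delta^p$, extending (after restriction) the ones given on the faces — but here the same trick applies: pulling back the trivializing data along $\rho$ gives a trivialization over $\Delta^p$, since the conditions on $\varphi^t$ (fiberwise over $\Delta^p$, fiberwise-homogeneous, $\varphi^0 = \id$, $f\circ\varphi^1 = q_{k,l}$) are all preserved under precomposition with a map that is the identity on the $\R^n$-factor. So in fact the filler is obtained by a single precomposition, and one does not even need the horn to be inner or the family on the faces to be glued carefully — the retraction handles everything at once.

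The one subtlety I would be careful about — and this is the main obstacle — is that the extension produced this way must genuinely \emph{restrict} to the prescribed simplices on the horn, i.e. the composite $\Lambda^p_j \hookrightarrow \Delta^p \xrightarrow{\rho} \Lambda^p_j$ is the identity. A generic deformation retraction of topological spaces need not be realized by a simplicial retraction of the simplicial set $\Lambda^p_j$ (the horn, as a simplicial set, may not be a retract of $\Delta^p$). The clean way around this is to not demand a simplicial retraction at all: $T^\TOP_{k,l}$ is defined via \emph{continuous} functions on geometric simplices, so the structure maps are pullback along affine-linear maps, and one is free to use the topological (non-simplicial) retraction $|\Delta^p| \to |\Lambda^p_j|$. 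Concretely, I would fix an explicit such retraction $\rho\co |\Delta^p| \to |\Lambda^p_j|$ that restricts to the identity on $|\Lambda^p_j|$ (these exist and can be written down barycentrically), observe that the given horn data assemble into an honest continuous fiberwise-quadratic function $F\co |\Lambda^p_j| \times \R^n \to \R$ together with a trivializing isotopy $\Psi^t$ over $|\Lambda^p_j|$, and then set the filler to be $(s,v) \mapsto F(\rho(s), v)$ with trivialization $(s,v) \mapsto \Psi^t(\rho(s),v)$. All four bulleted conditions in Definition~\ref{def:Ttop} are immediate, and restriction to each face $\text{face}_i \Delta^p$ with $i \neq j$ recovers the original simplex because $\rho$ is the identity there.

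Finally, for the "in particular" clause: a filtered colimit of Kan complexes along maps that are (levelwise) injections is again a Kan complex, since a horn $\Lambda^p_j \to \T^\TOP_\infty$ has image in some finite stage $\T^\TOP$ (as $\Lambda^p_j$ has finitely many nondegenerate simplices), which is a disjoint union of the Kan complexes $T^\TOP_{k,l}$, hence Kan; filling there and including back gives the required filler. I would state this as a one-line consequence of the fact that the stabilization maps are levelwise monomorphisms and that $T^\TOP_n$, being a finite coproduct of Kan complexes, is Kan.
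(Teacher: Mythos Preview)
Your overall strategy matches the paper's: extend the function over $\Delta^p$ by precomposing with a retraction $\rho\co |\Delta^p| \to |\Lambda^p_j|$, and then pull back a trivializing isotopy along $\rho$ as well. The ``in particular'' clause is also handled the same way.

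However, there is a genuine gap. You write that ``the given horn data assemble into an honest continuous fiberwise-quadratic function $F$ \ldots\ together with a trivializing isotopy $\Psi^t$ over $|\Lambda^p_j|$.'' The function $F$ does assemble, because the functions $f_s$ are the actual \emph{data} of the simplices and hence agree on overlapping faces by the simplicial identities. But the isotopy $\Psi^t$ does not: in Definition~\ref{def:Ttop} the isotopy is an \emph{existential} condition, not part of the simplex. Each $(p-1)$-face of the horn comes with \emph{some} isotopy $\varphi^t$ witnessing condition (2), chosen independently face by face, and there is no reason these choices agree on the $(p-2)$-faces where two faces meet. So you cannot simply glue them into a single $\Psi^t$ over $|\Lambda^p_j|$ and then pull back.

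This is exactly the point the paper addresses. It constructs a coherent isotopy $\Phi^t_s$ over the entire horn by induction over the faces $S_1,\dots,S_{p}$: having already defined $\Phi^t_s$ over $\bigcup_{k\le m} S_k$, one picks a retraction $r_m\co S_{m+1}\to S_{m+1}\cap\bigcup_{k\le m}S_k$ and sets $\Phi^t_s = \varphi^t_s \circ (\varphi^t_{r_m(s)})^{-1}\circ \Phi^t_{r_m(s)}$ on $S_{m+1}$, where $\varphi^t$ is the isotopy that comes with $S_{m+1}$. This formula forces agreement on the overlap and one checks $f_s\circ\Phi^1_s = q_{k,l}$ directly. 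Only after this global $\Phi$ over the horn is in hand can you pull back along $\rho$ as you describe. Your argument would be complete once you insert this inductive gluing step.
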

\begin{proof}
Let $\Lambda \subset \Delta^p$ be a horn and $r:\Delta^p\to \Lambda$ a retraction. Given a horn $(f_s)_{s\in \Lambda}$
in $T_{k,l}^\TOP$ we extend it by $f_s=f_{r(s)}$ for $s\in \Delta^p$.

We need to check the second condition in Definition~\ref{def:Ttop}, namely the existence of a fibered
isotopy $\varphi^t_s$ of homogeneous homeomorphisms with $f_s\circ\varphi^1_s=q_{k,l}$. Let $S_k$ for $k \in \{ 1,\dots,p\}$ denote the non-degenerate $(p-1)$-simplices in the horn so that $\Lambda = \cup_k S_k$.
By definition we have such an isotopy over each $S_k$, but these might not agree on the $p-2$ simplices where a pair of these simplices meet. So, assume for induction that we have already defined the desired isotopy $\Phi^t_s$ for $s\in \cup_{k\leq m} S_k$. Pick a retraction $r_m: S_{m+1} \to S_{m+1} \cap \cup_{k\leq m} S_k$ and an isotopy $\varphi^t_s$ as in the definition over $S_{m+1}$. We then extend $\Phi^t_s$ to any $s\in S_{m+1}$ by
\[\Phi^t_s=\varphi^t_s \circ (\varphi^t_{r_m(s)})^{-1} \circ \Phi^t_{r_m(s)}.\]
This continuously extends the already partially defined $\Phi^t_s$ over $S_{m+1}$ (as it agrees on their intersection where $r_m(s)=s$). We also verify that
\begin{align*}
  f_s(\Phi_s^1)=q_{k,l} \circ (\varphi^1_{r_m(s)})^{-1} \circ \Phi^1_{r_m(s)} = f_{r_m(s)} \circ \Phi^1_{r_m(s)}= q_{k,l}.
\end{align*} 
It follows that $\Phi^t_s$ may be defined over all of $\Lambda$. Finally we observe that the pull back to $\Delta^p$ given by $\Phi^t_{r(-)}$
gives the required isotopy over all of $\Delta^p$, proving that $f_s$ is a $p$-simplex in $T_{k,l}^\TOP$ filling the given horn.

For the last statement, any horn in $\T^\TOP_\infty$ is the image of a horn in some $T_{k,l}^\TOP$ so the claim follows.
\end{proof}

\begin{definition}
Let $E_{k,l}^u$ be the simplicial set whose $p$-simplices are pairs $(f_s,u_s)_{s \in \Delta^p}$ where $(f_s)_{s\in \Delta^p} \in T_{k,l}^\TOP$
and $u_s: \R^k\to \R^n$ is a homogeneous map of degree $1$ which induces a homotopy equivalence $\R^k\setminus \{0\}\to \{f_s<0\}$.
The face and degeneracy maps are defined by the obvious pullbacks.
\end{definition}

Let $E_n=\coprod_{k+l=n} E_{k,l}$ and let $\E=\coprod_n E_n$ which is a monoid under direct sum.
For $k,l\in \N$, we define $u_{k,l}:\R^k\to \R^{k+l}$ by $u_{k,l}(v)=(v,0)$ so that $e_{k,l}=(q_{k,l},u_{k,l})$ is an element of $E_{k,l}$ and set
\[\E_\infty=\colim(\E \to \E \to \cdots)\]
with respect to the left action by $e_{1,1}$.

Let $G_k^u$ be the space of all maps $\R^k\to\R^k$ which are homogeneous of degree $1$ and induce a homotopy equivalence
$\R^k\setminus \{0\}\to \R^k\setminus\{0\}$. It is a monoid under composition of maps, and is homotopy equivalent
to the monoid of \emph{unbased} homotopy equivalences $S^{k-1}\to S^{k-1}$. As a space it sits inside $G_k$ from Section~\ref{sec:norm-invar-models} and their colimits over $k$ are homotopy equivalent.

This monoid $G_k^u$ (really, the simplicial monoid $\sing_\bullet G_k^u$) acts on the right on $E_{k,l}$ by precomposition of the map $u$,
and we denote $B(E_{k,l},G_k^u)$ the associated bar construction.

\begin{proposition} \label{prop:twisted-derivative:2}
The forgetful map $E_{k,l}\to T^\TOP_{k,l}$ is a Kan fibration with fiber homotopy equivalent to $G^u_k$. In particular,  $E_{k,l}$ is a Kan complex. The monoid $G_k^u$ acts on $E_{k,l}$ making it a homotopy principal $G^u_k$-bundle
in the sense that the map $B(E_{k,l},G^u_k) \to T^\TOP_{k,l}$ is a homotopy equivalence.
\end{proposition}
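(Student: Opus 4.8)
The plan is to prove the three assertions in order: that $E_{k,l}\to T^\TOP_{k,l}$ is a Kan fibration, that its fiber is homotopy equivalent to $G^u_k$, and that consequently $B(E_{k,l},G^u_k)\to T^\TOP_{k,l}$ is a homotopy equivalence. First I would verify the fibration property directly from the definition. Given a horn inclusion $\Lambda^p_i\hookrightarrow \Delta^p$ together with a $p$-simplex $(f_s)_{s\in\Delta^p}$ of $T^\TOP_{k,l}$ and a compatible lift $(u_s)_{s\in\Lambda^p_i}$ of the restriction, one must extend the $u_s$ to all of $\Delta^p$. Since the condition on $u_s$ is that it be homogeneous of degree $1$ and restrict to a homotopy equivalence $\R^k\setminus\{0\}\to\{f_s<0\}$, and since $\{f_s<0\}$ varies continuously (indeed, via the trivializing isotopy $\varphi^t_s$ built into the data of the simplex $f$, it is carried homeomorphically onto $\{q_{k,l}<0\}$ fiberwise), this is a parametrized lifting problem for a fibration of pairs. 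Concretely I would use the trivialization $\varphi^1_s$ supplied by the simplex $f$ to transport everything to the constant model $\{q_{k,l}<0\}\cong\R^k\setminus\{0\}$, where the problem becomes extending a horn of maps $\R^k\setminus\{0\}\to\R^k\setminus\{0\}$ that are homogeneous of degree $1$ and are homotopy equivalences, i.e.\ a horn in $\sing_\bullet G^u_k$; since $\sing_\bullet$ of any space is a Kan complex, the horn fills. Pulling the filler back through $\varphi^1_s$ (and re-homogenizing, which is automatic because $\psi^1_s$ is itself homogeneous of degree $1$) gives the desired extension. Taking $T^\TOP_{k,l}$ to be a point recovers the statement that the fiber is $\sing_\bullet G^u_k$, hence the fiber is homotopy equivalent to $G^u_k$; and since the base $T^\TOP_{k,l}$ is Kan (already proved) and the fibration has Kan fibers, the total space $E_{k,l}$ is Kan.

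For the right $G^u_k$-action and the identification with a homotopy principal bundle, the action $(f_s,u_s)\cdot g_s=(f_s,u_s\circ g_s)$ is well-defined because precomposing a degree-$1$ homogeneous homotopy equivalence by an element of $G^u_k$ is again such a map, and it is free in the homotopical sense: the fiber of $E_{k,l}\to T^\TOP_{k,l}$ over any vertex is a torsor under $G^u_k$ up to homotopy, because any two lifts $u_s,u'_s$ of the same $f_s$ differ (up to homotopy) by the self-equivalence $u_s^{-1}\circ u'_s$ of $\R^k\setminus\{0\}$, which one makes precise again by transporting to the constant model via $\varphi^1_s$. The standard criterion (e.g.\ as in the discussion around \eqref{eq:sequence_bar}, or Segal's recognition of classifying spaces) then says: if $G^u_k$ acts on the right of a Kan complex $E_{k,l}$, the quotient map to $B(E_{k,l},G^u_k)\to T^\TOP_{k,l}$ is a homotopy equivalence precisely when $E_{k,l}\to T^\TOP_{k,l}$ is a principal bundle up to homotopy, which is what we have just verified. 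Alternatively, one compares the fibration sequence $G^u_k\to E_{k,l}\to T^\TOP_{k,l}$ with the universal one $G^u_k\to E(G^u_k)\to B G^u_k$: the classifying map $T^\TOP_{k,l}\to BG^u_k$ of the principal bundle fits into a map of fibration sequences which is the identity on fibers, hence a weak equivalence on total spaces, and one identifies $B(E_{k,l},G^u_k)$ with the pullback of $E G^u_k$, which maps by a homotopy equivalence to $T^\TOP_{k,l}$.

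The main obstacle I expect is not conceptual but a matter of bookkeeping with the homogeneity and continuity conditions: one must make sure that transporting lifts through the isotopy $\varphi^t_s$ preserves degree-$1$ homogeneity on the nose (not just up to homotopy), and that the resulting simplicial maps are genuinely continuous in the $\Delta^p$-variable, so that they define honest simplices of $E_{k,l}$ and not merely fiberwise data. This is exactly why the definition of $T^\TOP_{k,l}$ packages the trivializing isotopy $\varphi^t_s$ as part of the datum of a simplex rather than merely asserting its existence pointwise; using it, the region $\{f_s<0\}$ comes with a preferred homogeneous homeomorphism to the fixed region $\{q_{k,l}<0\}\cong \R^k\setminus\{0\}$ varying continuously over $\Delta^p$, and all lifting and torsor statements reduce to the corresponding (trivial) statements for the constant model, where $\sing_\bullet G^u_k$ does all the work.
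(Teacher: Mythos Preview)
Your proposal is correct and follows essentially the same approach as the paper: the paper too picks an isotopy $\varphi^t_s$ over $\Delta^p$, and its explicit filler $u_s=\varphi^1_s\circ(\varphi^1_{r(s)})^{-1}\circ u_{r(s)}$ is exactly what your transport--fill--transport procedure produces once you fill the horn in the constant model via a retraction $r:\Delta^p\to\Lambda$; the final statement is obtained in the paper by invoking Corollary~\ref{lem:bar2}, which is the criterion you describe. One small correction: the isotopy $\varphi^t_s$ is \emph{not} part of the datum of a simplex of $T^\TOP_{k,l}$ but only asserted to exist over $\Delta^p$, so you must choose one (as the paper does), and the transported target is $\{q_{k,l}<0\}$ rather than literally $\R^k\setminus\{0\}$---these are homotopy equivalent, and you only need that $\sing_\bullet$ of a fixed space is Kan, so the argument is unaffected.
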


\begin{proof}
Let $(f_s)_{s\in \Delta^p}$ be a simplex in $T^\TOP_{k,l}$ and $(f_s,u_s)_{s\in\Lambda}$ a lift
of some horn $\Lambda\subset \Delta^p$ to $E_{k,l}$. Let $r:\Delta^p\to \Lambda$ be a retraction.
Pick an isotopy $\varphi^t_s$ as in Definition~\ref{def:Ttop} and extend $u_s$ for $s\in \Delta$ by the formula
\[u_s=\varphi^1_s\circ (\varphi^1_{r(s)})^{-1}\circ u_{r(s)}.\]
We check that 
\[f_s\circ u_s=f_s\circ \varphi^1_s\circ (\varphi^1_{r(s)})^{-1}\circ u_{r(s)}=q_{k,l}\circ(\varphi^1_{r(s)})^{-1}\circ u_{r(s)}=f_{r(s)}\circ u_{r(s)}<0\]
and thus $(f_s,u_s)_{s\in \Delta}$ is a p-simplex in $E_{k,l}$.


The fiber of $E_{k,l}\to T^\TOP_{k,l} $ over the basepoint consists of homogeneous maps $\R^k\setminus\{0\}\to\{q_{k,l}< 0\}\simeq \R^k\setminus\{0\}$, hence is homotopy equivalent to $G^u_k$ and the homotopy equivalence may be constructed using the action on a single map.

From the above we deduce that $f:E_{k,l} \to T^\TOP_{k,l}$ is a Kan fibration and for each $q_{k,l} \in E_{k,l}$,
the action of $G^u_k$ on $e_{k,l}$ induces a homotopy equivalence from $G^u_k$ to the fiber $f^{-1}(f(e_{k,l}))$. Corollary~\ref{lem:bar2}
then shows that the induced map $B(E_{k,l},G^u_k) \to T^\TOP_{k,l}$ is a homotopy equivalence.
\end{proof}

Our next task is to show that the space of topological tubes is a model for $BG$, see \cite[Proposition 3.1 and Proposition 5.1]{W82} for closely related arguments. Our proof will rely on the fact that the space $\HH^\TOP(S^{k-1}\times S^{l-1})$ of topological h-cobordisms on $S^{k-1}\times S^{l-1}$ are highly connected. This follows by combining the following results.
  \begin{itemize}
  \item $\HH^\TOP(S^{k-1}\times S^{l-1})\to \colim_m \HH^\TOP(S^{k-1}\times S^{l-1}\times [0,1]^m)$
    is highly connected if $k+l$ is large (the stability theorem for topological $h$-cobordisms on $S^{k-1} \times S^{l-1}$). 
  \item $\colim_m\HH^\TOP(X\times [0,1]^m)\to \colim_m\HH^\TOP(Y\times [0,1]^m)$ is highly connected if $X\to Y$ is highly connected (see \cite{BLR}),
  \item $\HH^\TOP([0,1]^m)$ is contractible (Alexander's trick).
  \end{itemize}  
  
\begin{remark}
The stability theorem for topological h-cobordisms is equivalent to the stability theorem for topological concordances by delooping. For a PL manifold, the stability theorem for topological concordances is equivalent to the stability theorem for PL concordances, indeed the topological and PL concordance spaces coincide, see \cite[Theorem 6.2, p41]{BL74}. Thus for a PL manifold one may deduce the stability theorem for topological concordances from the stability theorem for PL concordances \cite{H75}. Alternatively, one may combine \cite[Theorem C, p451]{BL77}, which implies that for a smooth manifold the stability theorems for smooth and topological concordances are equivalent, with the Igusa stability theorem for smooth concordances \cite{I}. This reasoning does not rely on the PL case, compare with the discussion in \cite[Theorem 1.4.1, p22]{JRW}.
  \end{remark}
  
  The key step in our proof is the following:
\begin{proposition} \label{prop:twisted-derivative:3}
The map $(\sigma,d):\E_\infty \to \Z^2$ is a homotopy equivalence.
\end{proposition}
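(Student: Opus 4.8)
The plan is to show that each component of $\E_\infty$ is contractible and that the map $(\sigma,d)$ hits exactly the set of admissible pairs (which, after possibly reindexing, is all of $\Z^2$ in the colimit). The key geometric input is that an element of $E_{k,l}$ is a fiberwise-quadratic function $f$ on $\R^n$ together with a homogeneous degree-one map $u\colon \R^k\to\R^n$ which is a homotopy equivalence onto $\{f<0\}$; modding out by the radial coordinate, this is precisely the data of a topological $h$-cobordism structure on $S^{k-1}\times S^{l-1}$ relative to the standard one on the ``inner'' sphere $S^{k-1}$. So $E_{k,l}$, up to homotopy, is a space of such $h$-cobordism structures, and $T^\TOP_{k,l}$ is the quotient by $G^u_k$ (Proposition~\ref{prop:twisted-derivative:2}). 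Stabilizing by $e_{1,1}$ raises $(k,l)$ both by one, i.e.\ crosses with a pair of intervals, and this is exactly the stabilization appearing in the $h$-cobordism stability theorem quoted just above.

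First I would make precise the identification of $E_{k,l}$ (up to homotopy equivalence) with a twisted/relative version of the space $\HH^\TOP(S^{k-1}\times S^{l-1})$ of topological $h$-cobordisms: the pair $(f,u)$ determines the ``half'' $\{f\le 0\}$, which is a manifold with boundary whose interior boundary is identified via $u$ with $S^{k-1}$ and whose total space deformation retracts to it; dually $\{f\ge 0\}$ retracts onto $S^{l-1}$, and gluing produces an $h$-cobordism from $S^{k-1}\times S^{l-1}$ to itself (or rather the associated structure space). This is the content of the ``closely related arguments'' in \cite[Proposition 3.1]{W82}, and I would cite and adapt it. Next, by the three bulleted facts preceding the statement --- the stability theorem for $h$-cobordisms on $S^{k-1}\times S^{l-1}$, the BLR functoriality $\colim_m\HH^\TOP(X\times[0,1]^m)\to\colim_m\HH^\TOP(Y\times[0,1]^m)$ being highly connected when $X\to Y$ is, and $\HH^\TOP([0,1]^m)$ contractible via Alexander's trick --- one concludes that in the colimit the relevant $h$-cobordism spaces become contractible: the inclusion $S^{k-1}\times S^{l-1}\hookrightarrow \mathrm{pt}$ (through highly connected maps, after enough stabilization, since $k,l\to\infty$ along the colimit) forces $\colim \HH^\TOP$ to agree with $\colim\HH^\TOP(\mathrm{pt}\times[0,1]^m)\simeq\ast$. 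Hence each component of $\E_\infty$ is contractible.

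Then I would identify $\pi_0$. A component of $\E_\infty$ is recorded by the image in the colimit, and the invariants are: $\sigma=k-l$, which is fixed along stabilization by $e_{1,1}$ (both indices rise), and $d=k+l-2i$, where $i$ counts how far into the colimit one sits; the two together range over all of $\Z^2$ once one observes that for \emph{any} target pair $(\sigma_0,d_0)$ one can find $(k,l)$ with $k-l=\sigma_0$ and then choose the colimit stage $i$ so that $k+l-2i=d_0$, using that negative stabilization is available after passing far enough into the colimit (equivalently, $\E_\infty=\Z\times\E$ up to the reindexing bookkeeping of Section~\ref{sec:notation-terminology}). Combining surjectivity and injectivity of $(\sigma,d)$ on $\pi_0$ with contractibility of each component gives that $(\sigma,d)\colon\E_\infty\to\Z^2$ is a homotopy equivalence.

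The main obstacle I anticipate is the first step: carefully matching the simplicial model $E_{k,l}$ (homogeneous functions and degree-one maps on Euclidean space) with an honest space of topological $h$-cobordism structures on $S^{k-1}\times S^{l-1}$, keeping track of which boundary component is rigidified by $u$ and making sure the stabilization map $e_{1,1}$ really corresponds to the standard $\times[0,1]^2$ stabilization of $h$-cobordisms (and not some twisted variant). Once that dictionary is set up correctly, invoking the stability theorem and the BLR functoriality is essentially formal, and the $\pi_0$ computation is bookkeeping with the $(\sigma,d)$ invariants.
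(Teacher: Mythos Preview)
Your approach shares the paper's core idea (reduce to the high connectivity of $\HH^\TOP(S^{k-1}\times S^{l-1})$), but the dictionary step you flag as the ``main obstacle'' is in fact a genuine gap as you have set it up. The map $u\colon \R^k\to\R^n$ is only required to be a homotopy equivalence onto $\{f<0\}$, not an embedding, so the pair $(f,u)$ does \emph{not} directly determine an $h$-cobordism: there is no embedded copy of $S^{k-1}$ along which to cut, and no evident identification of any ``interior boundary'' with $S^{k-1}$. Establishing a global homotopy equivalence between $E_{k,l}$ and a space of $h$-cobordism structures would require dealing with this, and that is nontrivial.

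The paper sidesteps this by working one sphere at a time rather than identifying spaces. Given a family $(f_s,u_s)_{s\in\partial\Delta^p}$, it first exploits the freedom to decompose the stabilization $e_{1,1}^m$ as $e_{m,0}\oplus e_{0,m}$, so that one may assume $k$ is small compared to $l$. In that regime general position lets one homotope the $u_s$ to smooth embeddings isotopic to the standard inclusion $u_{k,l}$, and isotopy extension then reduces to the case $u_s\equiv u_{k,l}$. Only now does an $h$-cobordism appear: one chooses $\epsilon>0$ small so that $\{q_{k,l}^\epsilon\le 0\}\subset\{f_s<0\}\cup\{0\}$, and the region $W_s=\{f_s\le 0\le q_{k,l}^\epsilon\}\cap S^{n-1}$ is a genuine topological $h$-cobordism on $S^{k-1}\times S^{l-1}$. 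The cited connectivity then trivializes this family, and an explicit radial isotopy finishes the filling. So the asymmetric stabilization (making $k\ll l$ rather than both large simultaneously) and the use of a reference form $q_{k,l}^\epsilon$ to carve out the cobordism are the two concrete moves that replace your global dictionary and make the argument go through.
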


\begin{proof}
  Note that $\E_\infty$ is a Kan complex. Indeed, $\E_\infty \to \T^\TOP_\infty$ is a Kan fibration as it is the colimit of the Kan fibrations from the lemma above and $\T^\TOP_\infty$ is a Kan complex. It follows that it is enough to show that any simplicial map $\partial \Delta^p \to E_{k,l}$ eventually becomes null homotopic after stabilizing (increasing $k$ and $l$).

  We first observe that by decomposing the stabilization $e_{1,1}^m\oplus (f,u)$ into the composition of $e_{0,m}\oplus (f,u)$ and $e_{m,0}\oplus e_{0,m}\oplus (f,u) \simeq e_{1,1}^m\oplus (f,u)$
  we can assume that $k$ is small compared to $l$ as this is the case for $q_{0,m}\oplus f$ with $m$ large.
  
  Let $(f_s,u_s)_{s\in \del \Delta^p}\in E_{k,l}$ be a family in $E_{k,l}$ that we want to extend to the whole simplex $\Delta^p$.
  By a first homotopy, assuming that $k$ is small compared to $l$, we can make all $u_s : S^{k-1}\to \{f_s <0\}$ smooth embeddings
  and further assume that this family of smooth embeddings is isotopic to the standard inclusion $u_{k,l} : S^{k-1}\to \R^k\times \R^l$.
  By (smooth) isotopy extension, we can thus reduce to the case where $u_s=u_{k,l}$ for all $s\in \del \Delta^p$.
  
  For $\epsilon >0$, set 
  \[q_{k,l}^\epsilon(v_-,v_+)=\frac{1}{2}(-\epsilon v_-^2+v_+^2).\]
  For $\epsilon >0$ small enough,
  we have $\{q_{k,l}^\epsilon \leq 0\} \subset \{f_s <0\}\cup\{0\}$. In particular, the region
  $W_s=\{f_s\leq 0 \leq q^\epsilon_{k,l}\}\cap S^{n-1}$ is a topological h-cobordism on $(q^\epsilon_{k,l})^{-1}(0)\cap S^{n-1}\simeq S^{k-1}\times S^{l-1}$.

  By assuming $k$ and $l$ to be very large compared to $p$, we arrive at the key step, where we trivialize this family of h-cobordisms using the high connectivity of $\HH^\TOP(S^{k-1}\times S^{l-1})$. After isotopy extension (see \cite{EK71}), we 
  find a family of homogeneous isotopies $\psi^t$ of $\R^{k+l}$ such that $f_s\circ \psi^1_s$ coincides
  with $q_{k,l}^\epsilon$ near $(q_{k,l}^\epsilon)^{-1}(0)$. Then a radial isotopy (constant near $q_{k,l}^{-1}(0)$)
  \[\psi^t(v)=\left(t\left(\frac{q^\epsilon_{k,l}(v)}{f\circ\varphi^1(v)}\right)^{\frac{1}{2}}+1-t\right)v\]
  allows us to get $f_s\circ \varphi^1\circ \psi^1_s =q^\epsilon_{k,l}$ everywhere.
  
  This then allows us to extend the family $(f_s,u_s)_{s\in \del \Delta}$
  to the whole simplex $\Delta$ as desired.
\end{proof}

We also define
\[\B(E,G^u)=\coprod_{k,l\in \N} B(E_{k,l},G_k^u)\]
and
\[\B G^u=\coprod_{k\in \N} B G^u_k\]
which are both monoids under direct sum.

Using the left action by $e_{1,1}$ and $u_{1,1}$ we define the colimits

\[\B(E,G^u)_\infty=\colim(\B(E,G^u)\to \B(E,G^u) \to \cdots)\]
and
\[\B G^u_\infty=\colim(\B G^u\to \B G^u\to \cdots).\]

We have two monoid maps
\[\N\times \B G^u=\coprod_{k,l\in \N} B G^u_k \leftarrow \B(E, G^u) \to \T^\TOP\]
which after taking the colimits gives
\begin{equation}\label{eq:TtopBG}
\Z\times \B G^u_\infty \leftarrow \B(E, G^u)_\infty \to \T_\infty^\TOP.
\end{equation}
Here $\B G^u_\infty$ is homotopy equivalent to $\Z\times BG$.

\begin{proposition}\label{prop:BGu}
Both maps in \eqref{eq:TtopBG} are homotopy equivalences. In particular $\T_\infty^\TOP$ is homotopy equivalent to $\Z^2\times BG$.
\end{proposition}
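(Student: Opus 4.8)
The plan is to treat the two maps in \eqref{eq:TtopBG} separately, deducing the right-hand one formally from Proposition~\ref{prop:twisted-derivative:2} and the left-hand one from a fibration-sequence argument fed by Proposition~\ref{prop:twisted-derivative:3}. For the map $\B(E,G^u)_\infty \to \T^\TOP_\infty$, Proposition~\ref{prop:twisted-derivative:2} already says $B(E_{k,l},G^u_k)\to T^\TOP_{k,l}$ is a homotopy equivalence for every $(k,l)$, so $\B(E,G^u)\to \T^\TOP$ is a homotopy equivalence before stabilizing; since the stabilization maps on both sides are injections of simplicial sets the sequential colimits compute homotopy colimits, and a sequential homotopy colimit of homotopy equivalences is a homotopy equivalence, so the right-hand map is one.

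For the left-hand map $\B(E,G^u)_\infty \to \Z\times \B G^u_\infty$ I would argue via a fibration sequence. Each $G^u_k$ is group-like (its $\pi_0$ is the group of degrees $\pm1$), so by Proposition~\ref{lem:bar3}, applied with $G^u_k$ acting on the module $E_{k,l}$, there is a fibration sequence $E_{k,l}\to B(E_{k,l},G^u_k)\to BG^u_k$. Taking the disjoint union over $(k,l)$ and then the sequential colimit under the $e_{1,1}$-action, and using that a sequential colimit of Kan fibrations is a Kan fibration whose fibre over any vertex is the colimit of the fibres, where the induced stabilization on fibres is precisely the $e_{1,1}$-stabilization defining $\E_\infty$, yields a fibration sequence
\[\E_\infty \longrightarrow \B(E,G^u)_\infty \longrightarrow \Z\times \B G^u_\infty .\]
By Proposition~\ref{prop:twisted-derivative:3} every component of $\E_\infty$ is contractible, so this fibration has weakly contractible fibres over every vertex. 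It is moreover a bijection on $\pi_0$: before stabilizing, $B(E_{k,l},G^u_k)\to BG^u_k$ is a map of connected simplicial sets lying in matching summands, hence is the identity of $\N\times\N$ on $\pi_0$, and this persists to the colimit (where $\pi_0$ of both sides is $\Z^2$). A Kan fibration that is $\pi_0$-surjective with weakly contractible fibres is a weak equivalence, by the long exact homotopy sequence over each path component, so the left-hand map is a homotopy equivalence. The ``in particular'' then follows by composing the two equivalences and invoking $\B G^u_\infty\simeq \Z\times BG$, recorded just after \eqref{eq:TtopBG}, to get $\T^\TOP_\infty\simeq \Z\times\B G^u_\infty\simeq \Z^2\times BG$.

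The step I expect to need the most care is the passage to the colimit of fibration sequences: one must check that the colimit of the fibre inclusions $E_{k,l}\hookrightarrow B(E_{k,l},G^u_k)$ is again the fibre of the colimit map, so that Proposition~\ref{prop:twisted-derivative:3} applies on the nose, and one must keep in mind that the relevant fibres sit over \emph{different} path components of $\Z\times\B G^u_\infty$, so that ``contractible fibres'' upgrades to an equivalence only once combined with the $\pi_0$-bijectivity bookkeeping. Everything else (the group-like input, the colimit-is-homotopy-colimit observations, and the final composition) is routine.
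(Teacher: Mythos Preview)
Your proposal is correct and follows essentially the same strategy as the paper: the right-hand map is handled via Proposition~\ref{prop:twisted-derivative:2} exactly as you do, and the left-hand map is reduced to Proposition~\ref{prop:twisted-derivative:3} through the fibration sequence $E\to B(E,G^u)\to BG^u$.

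The one organizational difference worth noting is that for the left-hand map the paper first commutes the sequential colimit inside the bar construction, obtaining identifications $\B(E,G^u)_\infty\simeq B(\E_\infty,G^u)$ and $\Z\times\B G^u_\infty\simeq B(\Z^2,G^u)=\Z^2\times BG^u$, and then applies Proposition~\ref{prop:twisted-derivative:3} to the single map of bar constructions $B(\E_\infty,G^u)\to B(\Z^2,G^u)$. This sidesteps precisely the colimit-of-Kan-fibrations verification you flagged as the delicate step; commuting a sequential colimit with a bar construction is just commuting colimits and requires no fibrancy bookkeeping. Your route is fine, but if you want to avoid that verification entirely, the paper's reorganization does it for free.
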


\begin{proof}
The statement about the right arrow follows directly from Proposition~\ref{prop:twisted-derivative:2}. For the left map, we first need to rewrite $\B(E,G^u)_\infty$ in a different way.
We observe that the left action by $e_{1,1}$ on $\E$ is compatible with the left stabilization
$G^u_k\to G^u_{k+1}$ by $g\mapsto \id\times g$. Thus the corresponding colimit
\[ G^u =\colim G^u_k\]
acts on $\E_\infty$ and we have a natural identification
\[\B(E,G^u)_\infty\simeq B(\E_\infty,G^u).\]
Similarly we have
\[\Z\times \B G^u_\infty\simeq \Z\times B(\Z, G^u)=\Z^2\times BG^u,\]
and the claim now follows from Proposition~\ref{prop:twisted-derivative:3}.
\end{proof}

There is an analogous space $F_{k,l}$ consisting of pairs $(q,u)$ where $q\in Q_{k,l}$ and
$u : \R^k \to E^-(q)$ is a linear isometry, which carries a right action of the orthogonal group $O_k$
by precomposing $u$. We have a monoid $\FF=\coprod_{k,l} F_{k,l}$ with a monoid maps
\begin{equation*}
\QQ \leftarrow  \FF \to  \N\times\B O=\coprod_{k,l} BO_k.
\end{equation*}
We also define the colimits $\QQ_\infty$, $\FF_\infty$ and $\B O_\infty$ using
left multiplication by $q_{1,1}$, $e_{1,1}$ and $u_{1,1}$ respectively. We finally consider the monoid
$\B(F,O)=\coprod_{k,l} B(F_{k,l},O_k)$ and the colimit $\B(F,O)_\infty$ (again stabilized on the left).
\begin{proposition}\label{prop:BO}
We have:
\begin{enumerate}
\item The map $\FF_\infty \to \Z^2$ is a homotopy equivalence.
\item The map $\FF_\infty \to \QQ_\infty$ is a fibration with fiber $O=\colim_k O_k$.
\item The maps $\Z\times \B O_\infty \leftarrow\B(F, O)_\infty \rightarrow \QQ_\infty$ are homotopy equivalences.
\end{enumerate}
\end{proposition}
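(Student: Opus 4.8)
The plan is to establish (1)--(3) as the linear counterparts of Propositions~\ref{prop:twisted-derivative:2}, \ref{prop:twisted-derivative:3} and \ref{prop:BGu}, running the same arguments but with the topological $h$-cobordism input replaced by the elementary connectivity of Stiefel manifolds. The starting observation is that $F_{k,l}$ is the Stiefel manifold $V_k(\R^{k+l})$ of orthonormal $k$-frames in $\R^{k+l}$: a pair $(q,u)\in F_{k,l}$ is the same datum as the isometric embedding $u\co\R^k\hookrightarrow\R^{k+l}$, the form $q$ being recovered as $-\tfrac12\|{\cdot}\|^2$ on $\operatorname{image}(u)=E^-(q)$ and $+\tfrac12\|{\cdot}\|^2$ on its orthogonal complement. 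Under this identification, $F_{k,l}\to Q_{k,l}$, $(q,u)\mapsto q$, is the orthonormal frame bundle $V_k(\R^{k+l})\to\operatorname{Gr}_k(\R^{k+l})$ --- an honest principal $O_k$-bundle for the right action of $O_k$ on $u$ by precomposition --- and the stabilization $e_{1,1}\oplus(-)$ is the inclusion $V_k(\R^{k+l})\hookrightarrow V_{k+1}(\R^{k+l+2})$ adjoining one frame vector spanning a new negative direction and enlarging the ambient space by two dimensions.

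Granting this, (2) is immediate: $F_{k,l}\to Q_{k,l}$ is a Kan fibration with fiber $O_k$, the stabilization restricts on fibers to the standard inclusion $O_k\hookrightarrow O_{k+1}$, $g\mapsto\id_\R\oplus g$, and --- as already used in the proof of Proposition~\ref{prop:twisted-derivative:3} --- a sequential colimit of Kan fibrations along maps of fibration sequences is again a Kan fibration, with fiber the colimit of the fibers. Hence $\FF_\infty\to\QQ_\infty$ is a Kan fibration with fiber $\colim_k O_k=O$. For (1), each connected component of $\FF_\infty$ is a colimit $V_k(\R^{k+l})\hookrightarrow V_{k+1}(\R^{k+l+2})\hookrightarrow\cdots$ in which the $m$-th term $V_{k+m}(\R^{k+l+2m})$ is $(l+m-1)$-connected; so every component of $\FF_\infty$ is weakly contractible. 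It is also nonempty and connected once stabilized, the only point needing attention being that the two components of the boundary pieces $F_{k,0}=O_k$ become identified after a single stabilization, since $V_j(\R^n)$ is connected for $j<n$ --- this is the same bookkeeping as for $E_{k,0}$ in Proposition~\ref{prop:twisted-derivative:3}. The invariants $(\sigma,d)$ are locally constant and, exactly as there, set up a bijection $\pi_0\FF_\infty\xrightarrow{\ \cong\ }\Z^2$; combined with contractibility of the components, this shows $(\sigma,d)\co\FF_\infty\to\Z^2$ is a homotopy equivalence.

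For (3) I would mirror the proof of Proposition~\ref{prop:BGu}. The left action by $e_{1,1}$ is compatible with the stabilization $O_k\hookrightarrow O_{k+1}$, so $O=\colim_k O_k$ acts on $\FF_\infty$ and $\B(F,O)_\infty\simeq B(\FF_\infty,O)$. On the $\QQ$-side, the right action of $O_k$ on $F_{k,l}$ is free with quotient $Q_{k,l}$, so by Corollary~\ref{lem:bar2} each $B(F_{k,l},O_k)\to Q_{k,l}$ is a homotopy equivalence, and taking colimits shows $\B(F,O)_\infty\to\QQ_\infty$ is a homotopy equivalence. On the other side, the $O$-action on $\FF_\infty$ preserves its (contractible) components, so by (1) the projection $\FF_\infty\to\pi_0\FF_\infty\cong\Z^2$ is an $O$-equivariant weak equivalence for the trivial $O$-action on the target; by homotopy invariance of the bar construction, $B(\FF_\infty,O)\to B(\Z^2,O)=\Z^2\times BO$ is then a homotopy equivalence. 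Finally, just as $\B G^u_\infty\simeq\Z\times BG$ in Proposition~\ref{prop:BGu}, one has $\B O_\infty\simeq\Z\times BO$ (its $\pi_0$ is $\Z$, detected by $k$ modulo stabilization, and each component is $\colim_k BO_k=BO$), so $\Z\times\B O_\infty\simeq\Z^2\times BO$ and the equivalence just produced is the stated map $\B(F,O)_\infty\to\Z\times\B O_\infty$.

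I do not expect a serious obstacle: this is the classical, easy mirror of the topological story, in which the hard step (deducing $\E_\infty\simeq\Z^2$ from $h$-cobordism stability) is replaced by the connectivity of Stiefel manifolds, available by hand. The work is bookkeeping: lining up the stabilization maps on fibers so that the colimit fiber is genuinely $O$ (and not a disjoint union of copies of it); the $\pi_0$-count in (1), where the two components of the pieces $F_{k,0}=O_k$ must be checked to merge in $\FF_\infty$; and, as in the proofs of Propositions~\ref{prop:twisted-derivative:3} and \ref{prop:BGu}, the facts that Kan fibrations and two-sided bar constructions are preserved by the relevant sequential colimits.
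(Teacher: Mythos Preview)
Your argument is correct and follows essentially the same route as the paper: (1) is the contractibility of the stable Stiefel manifold, (2) comes from the fibration $O_k\to F_{k,l}\to Q_{k,l}$ at each level, and (3) is deduced by rerunning the proof of Proposition~\ref{prop:BGu}. You have simply supplied the details (the $\pi_0$-bookkeeping and the colimit compatibilities) that the paper leaves implicit in its three-sentence proof.
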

\begin{proof}
The first point is the contractibility of the (stable) Stiefel manifold. The second holds since for each $k,l$, the sequence $O_k \to F_{k,l} \to Q_{k,l}$ is a fibration. 
The third statement follows as in Proposition~\ref{prop:BGu}.
\end{proof}

As already mentioned the space $G/O$ (which we defined as $B(G,O)$) is an infinite loop space in a natural way (for instance via an action of the
linear isometry operad, which is an $E_\infty$-operad) so it makes sense to consider its delooping $B(G/O)$.
However to avoid introducing operadic methods, we give here a concrete model for this delooping. In view of the monoid inclusion 
\[\B O=\coprod_k BO_k \to \B G^u=\coprod_k B G^u_k\]
we have a right action of $\B O$ on $\B G^u_\infty$ and
we define
\begin{equation}\label{eq:B(G/O)}
B(G/O)=B(\B G^u_\infty, \B O).
\end{equation}

\begin{proposition}\label{prop:B(G/O)}
We have a natural homotopy equivalence $B(\T^\TOP_\infty,\QQ) \simeq B(G/O)$.
\end{proposition}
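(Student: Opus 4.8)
Here is how one would prove Proposition~\ref{prop:B(G/O)}.

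The plan is to connect both sides of the asserted equivalence $B(\T^\TOP_\infty,\QQ)\simeq B(G/O)$ to the mixed bar construction $B(\B(E,G^u)_\infty,\B(F,O))$, using the two compatible families of monoid maps constructed in this section: the forgetful maps $\pi_E\colon\B(E,G^u)\to\T^\TOP$, $\pi_F\colon\B(F,O)\to\QQ$ which forget the auxiliary homogeneous map $u$, and the collapse maps $\lambda_E\colon\B(E,G^u)\to\B G^u$, $\lambda_F\colon\B(F,O)\to\B O$ which forget both $u$ and the fibre grading $l$. All four are monoid maps compatible with the left stabilisations ($\oplus e_{1,1}$, respectively $\oplus q_{1,1}$) and with the direct-sum module structures, and the bottom row sits inside the top row. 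Taking colimits and bar constructions produces a zigzag
\[
B(\T^\TOP_\infty,\QQ)\ \xleftarrow{B(\pi_E,\pi_F)}\ B(\B(E,G^u)_\infty,\B(F,O))\ \xrightarrow{B(\lambda_E,\lambda_F)}\ B(\B G^u_\infty,\B O)=B(G/O),
\]
and the plan is to prove that both arrows are homotopy equivalences.

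For the left arrow I would observe that $\pi_E$ and $\pi_F$ are homotopy equivalences on each graded piece: $B(E_{k,l},G^u_k)\to T^\TOP_{k,l}$ by Proposition~\ref{prop:twisted-derivative:2}, and $B(F_{k,l},O_k)\to Q_{k,l}$ because $F_{k,l}\to Q_{k,l}$ is a principal $O_k$-bundle, so that $B(F_{k,l},O_k)\simeq F_{k,l}/O_k=Q_{k,l}$. Since they commute with the left stabilisations and intertwine the two module structures, they assemble into a map of bisimplicial sets $B_\bullet(\B(E,G^u)_\infty,\B(F,O))\to B_\bullet(\T^\TOP_\infty,\QQ)$ which is a levelwise homotopy equivalence, hence a homotopy equivalence on geometric realisations (cf.\ the bar-construction comparison lemmas and the appendix of \cite{ACGK}).

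The right arrow is the crux, since $\lambda_E$ and $\lambda_F$ are \emph{not} homotopy equivalences: $\lambda_F$ collapses the $l$-grading (an extra factor $\N$ in $\pi_0$), while $\lambda_E$ collapses the $\sigma=k-l$ direction (an extra factor $\Z$ in $\pi_0$), and these losses must cancel once the bar construction is formed. I would make this precise by realising $\B O$ and $\B G^u$ as colimits along the $l$-stabilisation. On the $(k,l)$-summand, $\colim_l B(F_{k,l},O_k)=B(\colim_l F_{k,l},O_k)=B(V_k(\R^\infty),O_k)\simeq BO_k$ since the infinite Stiefel manifold $V_k(\R^\infty)$ is contractible, so $\colim_l\B(F,O)\simeq\B O$; likewise $\colim_l B(E_{k,l},G^u_k)=B(\colim_l E_{k,l},G^u_k)\simeq BG^u_k$, where $\colim_l E_{k,l}$ is contractible because it is the ambient stabilisation of the space of parametrised topological tubes on $S^{k-1}$ --- equivalently $\colim_l T^\TOP_{k,l}\simeq BG^u_k$, which follows from the stability theorems quoted before Proposition~\ref{prop:twisted-derivative:3} (cf.\ \cite{W82}), and $E_{k,l}$ is the total space of the associated homotopy principal $G^u_k$-bundle over $T^\TOP_{k,l}$. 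Assembling these into a tower of module--monoid pairs and using that the bar construction commutes with filtered colimits then gives
\[
\colim_l B(\B(E,G^u)_\infty,\B(F,O))\ \simeq\ B\bigl(\colim_l\B(E,G^u)_\infty,\ \colim_l\B(F,O)\bigr)\ \simeq\ B(\B G^u_\infty,\B O)=B(G/O).
\]
Finally I would check that the canonical map $B(\B(E,G^u)_\infty,\B(F,O))\to\colim_l B(\B(E,G^u)_\infty,\B(F,O))$ is already a homotopy equivalence: transported across the left arrow, the $l$-stabilisation acts on $\T^\TOP_\infty$ by left (or right) multiplication by $q_{0,1}\in\QQ$, which is a homotopy equivalence of $\T^\TOP_\infty$ because $\E_\infty\simeq\Z^2$ is discrete (Proposition~\ref{prop:twisted-derivative:3}), so the induced self-map of $B(\T^\TOP_\infty,\QQ)$ is a levelwise, hence a global, homotopy equivalence. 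Composing the three equivalences then proves the proposition.

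The hard part will be the organisation of the colimits in the previous paragraph: one must set up the $l$-stabilisation as an honest tower of module--monoid pairs despite the rank shift introduced by $\oplus q_{0,1}$ (for instance using telescopes or mapping cylinders and the flexibility of the simplicial-set model fixed at the start of this section), verify that the bar construction commutes with these colimits on the nose, and track the $\pi_0$-bookkeeping $(k-l,k-i)\mapsto k-i$ so that $\colim_l\B(E,G^u)_\infty$ is identified with $\B G^u_\infty$ itself and not with $\Z\times\B G^u_\infty$. An alternative route is to apply Propositions~\ref{prop:BGu} and \ref{prop:BO} to pin down the homotopy types of all four spaces involved and then verify the cancellation directly at that level.
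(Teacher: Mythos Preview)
Your zigzag through $B(\B(E,G^u)_\infty,\B(F,O))$ is exactly the one the paper uses, and your argument for the arrow to $B(\T^\TOP_\infty,\QQ)$ via levelwise equivalence of the bar simplicial objects is fine. The gap is in the other arrow. Your proposed $l$-stabilisation is not a tower of module--monoid pairs: left multiplication by $q_{0,1}$ on $\B(F,O)$ (or $\QQ$) is not a monoid homomorphism, so $\colim_l\B(F,O)$ does not inherit a strict monoid structure, and the identification $\colim_l B(\B(E,G^u)_\infty,\B(F,O))\simeq B(\colim_l\B(E,G^u)_\infty,\colim_l\B(F,O))$ has no obvious meaning without passing to $A_\infty$-structures, which the paper explicitly declines to do. You flag this yourself in the final paragraph but do not resolve it, and your fallback suggestion of pinning down homotopy types via Propositions~\ref{prop:BGu} and~\ref{prop:BO} is too vague: knowing the homotopy types of the four corners does not by itself identify the map.

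The paper sidesteps the colimit bookkeeping entirely by applying Corollary~\ref{lem:bar4} uniformly to both arrows. This requires checking two things: first, that each of the three monoids $\N\times\B O$, $\B(F,O)$, $\QQ$ acts on its module by homotopy equivalences, which holds because every element $m$ admits a direct-sum partner $m'$ with $m\oplus m'$ homotopic to a power of the stabilising element; second, that the two monoid maps induce equivalences on classifying spaces $BM$. The second check is the key step you are missing. For each of the three monoids $M$, Proposition~\ref{lem:bar3} gives a fibration sequence $\Omega BM\to\mathcal M_\infty\to B(\mathcal M_\infty,M)$, and $B(\mathcal M_\infty,M)=\colim B(M,M)$ is contractible by Proposition~\ref{lem:bar0}, so $\Omega BM\simeq\mathcal M_\infty$. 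Proposition~\ref{prop:BO} shows the three spaces $\mathcal M_\infty$ are all equivalent compatibly with the maps between them; since each $BM$ is connected, the maps between the $BM$ are therefore equivalences, and Corollary~\ref{lem:bar4} applies. This group-completion manoeuvre replaces your attempted tower of monoids and is the cleaner way to handle the arrow to $B(G/O)$.
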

\begin{proof}
Consider the commutative diagram
\begin{center}
\begin{tikzcd}
 BO_k \arrow[d] & \arrow[l] B(F_{k,l},O_k) \arrow[r] \arrow[d] &Q_{k,l}  \arrow[d]\\
 B G^u_k & \arrow[l,] B(E_{k,l}, G^u_k) \arrow[r] &T^\TOP_{k,l}.
\end{tikzcd}
\end{center}
Taking the disjoint union of the top row over all $(k,l) \in \N^2$ we get monoids with respect to direct sum, namely $\N\times \B O$, $\B(F, O)$ and $\QQ$.
Each such monoid acts on the corresponding colimit spaces 
$\Z\times \B G^u_\infty$, $\B(E,G^u)_\infty$ and $\T_\infty^\TOP$ in the lower row. Moreover the maps between these colimit spaces
are homotopy equivalences according to Proposition~\ref{prop:BGu}.

We want to complete the proof by applying Corollary~\ref{lem:bar4}. We first need to check that the monoids
act by homotopy equivalences on the colimit spaces at the bottom. This holds since for 
each of these monoids $M$ and each element $m \in M$ in these monoids there is an $m'\in M$
such that $m\oplus m'$ is homotopic to a power of the stabilizing element $e$ (see \cite[Lemma 4.2]{ACGK}
for the case of $\QQ$). Moreover, according to Proposition~\ref{lem:bar3} the colimit 
\[\mathcal{M}_\infty=\colim(M\xrightarrow{e} M \xrightarrow{e}  \cdots)\]
sits in a fibration sequence
\[\Omega B M \to \mathcal{M}_\infty \to B(\mathcal{M}_\infty,M)\to BM\]
and $B(\mathcal{M}_\infty,M)=\colim B(M,M)$ is contractible according to Proposition~\ref{lem:bar0}.
So the map $\Omega B M\to \mathcal{M}_\infty$ is a homotopy equivalence for each of the 3 monoids. According to Proposition~\ref{prop:BO} these three $\mathcal{M}_\infty$ agree (in a way compatible with the maps between the monoids) hence the induced maps between the three $\Omega BM$ are homotopy equivalences. As each $BM$ is connected this implies that the hypothesis
\[B(G/O)=B(\B G^u_\infty, \B O) \xleftarrow{\sim} B(\B(E,G^u)_\infty,\B(F,O)) \xrightarrow{\sim} B(\T^\TOP_\infty,\QQ).\]
\end{proof}

\subsection{Derivative on topological tubes}

Our goal now is to extend the derivative map $\T \to \GG$ to the bigger space $\T^{\TOP}$
of all topological tubes. To achieve this we first provide tubes with extra data and obtain spaces
$\TT$ and $\TT^{\TOP}$ with forgetful maps $\TT\to \T$, $\TT^{\TOP}\to \T^{\TOP}$ which are
homotopy equivalences. Then we will define a topological derivative map
\[\tnabla :\TT^{\TOP} \to \GG\]
and prove that the following diagram is homotopy commutative through monoid maps
\begin{equation}
\begin{tikzcd}
\TT \arrow[r]\arrow[d]& \TT^{\TOP}\arrow[d,"\tnabla"]\\
\T \arrow[r,"\nabla"] & \GG.
\end{tikzcd}
\end{equation}

\begin{definition}
Let $f\in T_n^{\TOP}$. An \emph{adapted pair of deformation retractions} is a pair $(d^-,d^+)$
of continuous maps $[0,1]\times \R^n\to \R^n$ which satisfy:
\begin{enumerate}
\item $d_0^\pm = \id$,
\item $\forall \lambda \geq 0, \forall s\in[0,1], \forall v\in \R^n$, $d_s^\pm(\lambda v)=\lambda d_s^\pm(v)$,
\item $d_1^\pm(\R^n)\subset \{\pm f \geq 0\}$,
\item $d_1^\pm(\{\pm f\geq 0\}\cap S^{n-1}) \subset \{\pm f >0\}$,
\item for all $v\in \R^n$, $s\mapsto \pm f\circ d_s^\pm(v)$ is non-decreasing.
\end{enumerate}
\end{definition}

\begin{definition}
We define $\widetilde{T}_n^{\TOP}$ as the simplicial set whose vertices consist of triples $(f,d^-,d^+)$ where $f\in T_n^{\TOP}$
and $(d^-,d^+)$ is an adapted pair of deformation retractions, and $p$-simplices consist of families of such parametrized by $\Delta^p$ such that the combined maps $\Delta^p \times [0,1] \times \bR^n \to \bR^n$ are continuous. Face and degeneracy maps are defined in the obvious way. We also define the monoid $\TT^\TOP=\coprod_n\widetilde{T}_n^\TOP$.
\end{definition}

\begin{lemma}\label{lem:adapted}
The forgetful map $\TT^\TOP \to \T^\TOP$ is a Kan fibration and a homotopy equivalence.
\end{lemma}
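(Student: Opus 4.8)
The plan is to prove the stronger statement that $\TT^\TOP\to\T^\TOP$ is a \emph{trivial} Kan fibration, i.e.\ that it has the right lifting property against every boundary inclusion $\partial\Delta^p\hookrightarrow\Delta^p$. This implies at once that it is a Kan fibration; and since a trivial Kan fibration is a weak equivalence and $\T^\TOP$ is a Kan complex by the Proposition above showing each $T^\TOP_{k,l}$ is a Kan complex (so that $\TT^\TOP$ is one too), it also implies that $\TT^\TOP\to\T^\TOP$ is a homotopy equivalence.

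So fix $p\ge0$, a $p$-simplex $(f_s)_{s\in\Delta^p}$ of $T^\TOP_{k,l}$, and a lift $(f_s,d^-_s,d^+_s)_{s\in\partial\Delta^p}$ of its boundary to $\TT^\TOP$. By condition~(2) of Definition~\ref{def:Ttop} there is a fibrewise homogeneous isotopy $\varphi^t_s$ with $\varphi^0=\id$ and $f_s\circ\varphi^1_s=q_{k,l}$ for all $s\in\Delta^p$; put $\varphi_s:=\varphi^1_s$, so that $f_s=q_{k,l}\circ\varphi_s^{-1}$. Since each $\varphi_s$ is a homogeneous homeomorphism, conjugation $(d^-_s,d^+_s)\mapsto(\varphi_s^{-1}\circ d^-_s\circ\varphi_s,\ \varphi_s^{-1}\circ d^+_s\circ\varphi_s)$ is an invertible operation that carries an adapted pair of deformation retractions for $f_s$ to an adapted pair for the \emph{constant} function $q_{k,l}$ (each of the five defining conditions transforms correctly, using homogeneity and $q_{k,l}\circ\varphi_s^{-1}=f_s$). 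Thus the lifting problem above becomes: extend the map $\partial\Delta^p\to\mathcal F_n$ given by $(e^-_s,e^+_s):=(\varphi_s^{-1}d^-_s\varphi_s,\ \varphi_s^{-1}d^+_s\varphi_s)$ to a map $\Delta^p\to\mathcal F_n$, where $\mathcal F_n$ is the fibre of $\TT^\TOP_n\to\T^\TOP_n$ over the vertex $q_{k,l}$; conjugating an extension back by $\varphi_s$ then solves the original problem and restricts correctly on $\partial\Delta^p$. Hence it suffices to show that $\mathcal F_n$ has the right lifting property against $\partial\Delta^p\hookrightarrow\Delta^p$ for all $p$, i.e.\ that $\mathcal F_n$ is a contractible Kan complex.

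An adapted pair is just a pair, with no coupling between $d^-$ and $d^+$, so $\mathcal F_n\cong\mathcal P^-_n\times\mathcal P^+_n$, where $\mathcal P^{\pm}_n$ is the simplicial set of ($\pm$-)adapted deformation retractions of $q_{k,l}$; a product is a contractible Kan complex iff both factors are. Composing $q_{k,l}$ with the coordinate swap $\R^k\times\R^l\cong\R^l\times\R^k$ replaces $q_{k,l}$ by $-q_{l,k}$ and identifies $\mathcal P^-_n$ with a space of the same type as $\mathcal P^+_n$, so it is enough to prove that $\mathcal P^+_n$ is a contractible Kan complex, i.e.\ that every family of $+$-adapted homogeneous deformation retractions $d$ of $q:=q_{k,l}$ parametrised by $\partial\Delta^p$ extends over $\Delta^p$. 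I would do this by a deformation of $\mathcal P^+_n$ onto the explicit element $D_s(v_-,v_+):=\bigl((1-s)v_-,\ v_+\bigr)$ — which lies in $\mathcal P^+_n$ since $q\circ D_s=\tfrac12(\|v_+\|^2-(1-s)^2\|v_-\|^2)$ is non-decreasing in $s$ and positive on $\{q\ge0\}\cap S^{n-1}$ at $s=1$ — carried out in stages: first reparametrise the $s$-variable (using that the set of non-decreasing surjections of $[0,1]$ fixing the endpoints is convex) to make $d$ stationary on $[\tfrac12,1]$, so that $d_{1/2}=d_1$ already maps $\R^n$ into $C^+:=\{q\ge0\}$; then homotope this constant tail so that $d_1$ becomes the composite of $d_{1/2}$ with the radial retraction $R_t(v_-,v_+)=((1-t)v_-,v_+)$ of $C^+$ onto its core $\{v_-=0\}$ (legitimate because $R_t$ preserves $C^+$ and increases $q$); and finally, with $d_1$ now of the form $v\mapsto(0,\psi(v))$ for a homogeneous $\psi$ that is non-zero on $C^+\setminus\{0\}$, straighten $\psi$ to the coordinate projection and contract the remaining path to $D$.

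The step I expect to be the main obstacle is making each of these homotopies land inside $\mathcal P^+_n$ at every intermediate time — concretely, preserving the monotonicity condition (5), ``$s\mapsto q\circ d_s(v)$ non-decreasing'', and, on the nose, condition (3), ``$d_1(\R^n)\subset C^+$''. These two conditions are precisely why a naive straight-line homotopy between two elements of $\mathcal P^+_n$ need not stay in $\mathcal P^+_n$ (the cone $C^+$ is not convex and $q$ is only $2$-homogeneous, not linear), so the reparametrisations and the systematic use of the radial retraction $R_t$ in the stages above are not cosmetic; checking that the intermediate functions continue to satisfy (1)--(5), and doing so continuously in the $\Delta^p$-parameter, is where the real work lies. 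Once this is done, unwinding the reductions shows that $\TT^\TOP\to\T^\TOP$ is a trivial Kan fibration, hence a Kan fibration and a homotopy equivalence.
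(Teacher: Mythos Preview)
Your overall strategy is correct and matches the paper's: reduce to contractibility of the fibre of $\TT^\TOP\to\T^\TOP$ via the isotopy $\varphi^t_s$ built into Definition~\ref{def:Ttop}. The paper phrases this as ``Kan fibration (by isotopy extension) plus contractible fibres'' rather than ``trivial Kan fibration via conjugation'', but the content is the same, and your observation that $\mathcal F_n\cong\mathcal P^-_n\times\mathcal P^+_n$ is fine.

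Where the approaches diverge is the contractibility of the fibre, and here the paper's argument is considerably cleaner than your multi-stage plan. Fix an adapted $D^-$ once and for all; then for any family $(d^-_{s,t})_{t\in\partial\Delta}$ the paper writes down the single explicit homotopy
\[
\delta^-_{s,t,u}=\begin{cases} d^-_{s,t}, & 0\le s\le u,\\[2pt] D^-_{(s-u)/(1-u)}\circ d^-_{u,t}, & u\le s\le 1,\end{cases}
\]
which interpolates from $d^-_{\cdot,t}$ at $u=1$ to the constant-in-$t$ family $D^-_\cdot$ at $u=0$ (using $d^-_{0,t}=\id$). Conditions (1)--(5) are immediate: on $[0,u]$ they are inherited from $d^-$, and on $[u,1]$ they follow because $D^-$ is itself adapted and is being applied to a fixed point $d^-_{u,t}(v)$. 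In particular the monotonicity (5) and the endpoint conditions (3),(4) that you rightly flag as the obstacle for a naive straight-line homotopy are handled automatically by this ``concatenate the initial segment of $d$ with $D$'' trick. Your stages 1--2 are essentially a special case of this idea; the paper's formula obviates the need for your stage 3 entirely.
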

\begin{proof}
We only prove the path lifting property, the case of a general horn being similar.

By isotopy extension any family $f_t\in T_n^\TOP$, for $t\in [0,1]$ can be written as $f_t=f_0\circ\varphi_t$
for some homogeneous isotopy $\varphi_t:\R^n\to \R^n$. Hence an adapted pair $d^\pm$ for $f_0$
can be carried along as $\varphi_t^{-1}\circ d_s^\pm$.

Consider the fiber over some function $f \in \T^\TOP$ and a family $(d_{s,t}^\pm)_{t\in \del \Delta}$
for some simplex $\Delta$. We focus on $d_s^-$ since the situation is completely symmetric.
There exists $\epsilon >0$ such that $d_{1,t}^-(S^{n-1}) \subset \{f\leq \epsilon r^2\}$
for all $t\in \del \Delta$.
We can construct a specific deformation retraction $D^-_s$ which is the identity map on $\{f\leq \epsilon\}\cap S^{n-1}$.
We then deform the family $(d_{s,t}^-)_{t\in \del \Delta}$ to $D_s^-$ by the following explicit formula:
\[\delta^-_{s,t,u}= \begin{cases} d^-_{s,t},& 0\leq s \leq u \\
D^-_{\frac{s-u}{1-u}}\circ d^-_{u,t},& u \leq s \leq 1\end{cases}\]
This satisfies $\delta^-_{s,t,0}=D^-_s$ and $\delta^-_{s,t,1}=d^-_{s,t}$. It is straightforward to check that $\delta^-_{s,t,u}$ satisfies all properties of an adapted deformation retraction for $f$.
\end{proof}

For $q\in Q_n$, we can define a canonical adapted pair $(d_s^+,d_s^-)$ as follows:
write every element $v\in \R^n$ as the sum $v=v_-+v_+$ with respect to the decomposition into positive and negative eigenspaces and set
\[d_s^-(v)=v_-+(1-s)v_+,\quad d_s^+(v)=(1-s)v_-+v_+.\]
This defines an inclusion map
\[\QQ\to \TT^\TOP.\]
The map $v\mapsto d_1^+(v)-d_1^-(v)=v_+-v_-$ is the orthogonal reflection and coincides with the derivative $\nabla q$.

Now for $(f,d^-,d^+)\in \TT^\TOP$ we observe that:
\[\forall v\in \R^n\setminus \{0\},\quad d_1^+(v)- d_1^-(v)\neq 0.\]
Indeed suppose $v\in \R^n\setminus\{0\}$ satisfies $d_1^+(v)=d_1^-(v)$. If $f(v)\geq 0$,
then $f\circ d_1^+(v)> f(v)$ but $f\circ d_1^+(v)=f\circ d_1^-(v)\leq f(v)$ which is a contradiction,
and the case $f(v)\leq 0$ is impossible too.

\begin{definition}\label{dfn:dualitymaps}
  We define the unstable \emph{$S$-duality map} $\tnabla : \TT^\TOP\to \GG$ as the map
  \[\tnabla(f,d^\pm) (v)=d_1^+(v)-d_1^-(v).\]
  It is a monoid map and thus induces the \emph{unstable twisted $S$-duality map}
  \[\tnabla : B(\TT^\TOP,\QQ)\to B(\GG, \OO).\]
\end{definition}
This indeed takes values in $\GG$ since being in $\GG$ is a homotopy invariant condition and
any $(f,d_s^\pm)\in \TT^\TOP$ is homotopic to some $q\in \QQ$ by definition of $\T^\TOP$ and Lemma~\ref{lem:adapted}.

We now want to compare $\nabla$ and $\tnabla$ on smooth tubes.

\begin{definition}
Let $\widetilde{T}_n$ be the simplicial set whose vertices consist of quadruples $(f,d^-,d^+,X)$ where
$(d^-,d^+)$ is an adapted pair for $f\in \T_{k,l}^q$ with $k+l=n$ and $X$ is a continuous vector field on $\R^n$ satisfying
\begin{itemize}
\item $X(\lambda v)=\lambda X(v)$ for all $\lambda\geq 0$ and $v\in \R^n$,
\item $d_v f(X(v))>0$ for all $v\in \R^n\setminus\{0\}$,
\item $f(v+tX(v))>f(v)$ for all $t\in [0,1]$ and $v\neq 0$.
\end{itemize}
and $p$-simplices consist of $\Delta^p$-families of such, continuous in the parameter direction,
with face and degeneracy maps defined in the obvious way.
\end{definition}

We let $\TT=\coprod_n \widetilde{T}_n$ and make it a monoid under direct sum of all data.

\begin{lemma}\label{lem:forgetilde}
The forgetful map $\TT \to \T^q$ is a Kan fibration and a homotopy equivalence.
\end{lemma}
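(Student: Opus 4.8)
The plan is to prove the stronger statement that the forgetful map $p\colon\TT\to\T^q$ is a trivial Kan fibration, i.e.\ that it has the right lifting property against every boundary inclusion $\partial\Delta^p\hookrightarrow\Delta^p$; this yields at once that $p$ is a Kan fibration and a weak equivalence, which is what is meant here by a homotopy equivalence of simplicial sets. So I fix a $p$-simplex $(f_s)_{s\in\Delta^p}$ of $\T^q$ together with a lift of its restriction to $\partial\Delta^p$ to $\TT$, i.e.\ a family of adapted pairs $(d^-_s,d^+_s)$ and a family of vector fields $X_s$ over $\partial\Delta^p$, and I must extend $(d^\pm)$ and $X$ over $\Delta^p$. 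For fixed $f_s$ the adapted pair and the vector field satisfy no common constraint, so the two extensions are carried out independently.

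The extension of $(d^-,d^+)$ follows the argument of Lemma~\ref{lem:adapted} with $\partial\Delta^p$ in place of the horn: isotopy extension lets one write $f_s=f_{s_0}\circ\varphi_s$ and carry an adapted pair along as $\varphi_s^{-1}\circ d_t^\pm$, while over a fixed function one deforms a boundary family of adapted pairs to a preferred one by the explicit formula used there. The extension of $X$ is the crux, and the difficulty is that, for fixed $f$, the set $\mathcal{X}_f$ of vector fields satisfying the three defining conditions is \emph{not} convex, so one cannot simply interpolate. Two observations repair this. First, $\mathcal{X}_f$ is invariant under shrinking rays toward the origin: if $X\in\mathcal{X}_f$ then $\lambda X\in\mathcal{X}_f$ for all $\lambda\in(0,1]$, directly from the conditions (the monotonicity condition because $t\lambda\in(0,1]$ whenever $t,\lambda\in(0,1]$). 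Second, by homogeneity the conditions on a candidate field $W$ need only be checked on the unit sphere, where, over any compact family of such $W$ each with $d_v f(W(v))>0$, the quotient $d_v f(W(v))/\|W(v)\|$ is bounded below by a positive constant; inserting this into the $C^1$-estimate $|f(v+w)-f(v)-d_v f(w)|\le\|w\|\,\omega(\|w\|)$, for $\omega$ a modulus of continuity of $df$, shows that after rescaling $W$ by a sufficiently small constant $\mu>0$ the monotonicity condition $f(v+tW(v))>f(v)$, $t\in(0,1]$, holds automatically.

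Granting these, I would extend $X$ as follows. Since each $f_s$ is $C^1$ with $\nabla f_s$ nonvanishing away from the origin, by the second observation and compactness of $\Delta^p$ there is $\epsilon>0$ with $\epsilon\nabla f_s\in\mathcal{X}_{f_s}$ for all $s$. Extend the boundary family $X_s$ arbitrarily, by coning off, to a continuous family $\widehat{X}_s$ of degree-$1$ homogeneous vector fields over $\Delta^p$; by openness of validity $\widehat{X}_s\in\mathcal{X}_{f_s}$ on a thin closed collar $N$ of $\partial\Delta^p$. Set $X_s=\epsilon\nabla f_s$ outside $N$, and inside $N$ run, parametrised by the collar coordinate, the path in $\mathcal{X}_{f_s}$ from $\widehat{X}_s$ to $\epsilon\nabla f_s$ which first shrinks $\widehat{X}_s$ by the factor $\mu$ (valid by the first observation), then linearly interpolates the shrunk field to $\mu\epsilon\nabla f_s$ (valid for $\mu$ small by the second observation applied to $(1-\sigma)\widehat{X}_s+\sigma\epsilon\nabla f_s$), then regrows along multiples of $\nabla f_s$ with coefficient at most $\epsilon$ (valid by the first observation); choosing $N$, $\mu$, $\epsilon$ uniformly by compactness makes this continuous, everywhere valid, and equal to $X_s$ on $\partial\Delta^p$. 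The same three-stage recipe, applied to a sphere's worth of elements of $\mathcal{X}_f$, also shows $\mathcal{X}_f$ is weakly contractible, so that, together with the contractibility of the space of adapted pairs, the fibers of $p$ are contractible; this is worth recording separately. The main obstacle throughout is the non-convexity of $\mathcal{X}_f$, which is why the argument must go through the homogeneity of $f$ rather than a naive interpolation.
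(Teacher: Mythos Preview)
Your proof is correct and follows essentially the same strategy as the paper: the adapted pair is handled via Lemma~\ref{lem:adapted}, and for the vector field $X$ the key observation in both arguments is that conditions (1) and (2) cut out a convex set while condition (3) becomes automatic after shrinking, so one can interpolate after rescaling. The only difference is packaging---you prove directly that $p$ is a trivial Kan fibration, whereas the paper separately argues Kan fibration (via isotopy extension, as for $\TT^\TOP$) and contractibility of the fiber of vector fields---and your three-stage path makes explicit what the paper compresses into a single sentence.
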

\begin{proof}
The proof is almost the same as for $\TT^\TOP \to \T^\TOP$ (using isotopy extension in the smooth case),
the only difference is the vector field $X$. We claim that this space of vector fields for a given $f$ is contractible.
The first and second condition defines a convex subset of the space of all vector fields.
The second condition implies the third for $t\in[0,\epsilon]$ and $\epsilon>0$ small enough.
Hence after shrinking $X$ to $\epsilon X$, the third property is valid and the result follows. 
\end{proof}

\begin{proposition}\label{prop:derivativesagree}
The maps $$\TT \to \TT^\TOP \xrightarrow{\tnabla}  \GG \qquad \text{and} \qquad \TT\to\T\xrightarrow{\nabla} \GG$$  are homotopic through monoid maps.
\end{proposition}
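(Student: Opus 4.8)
The plan is to realize the homotopy as the affine interpolation between the two maps and to show that it stays inside $\GG$ throughout, using the monotonicity built into an adapted pair together with the auxiliary vector field.

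Write $\Phi_1,\Phi_2\colon \TT\to\GG$ for the two composites. Both are monoid maps for direct sum, and on the $n$-th component both land in self-maps of $S^{n-1}$; concretely $\Phi_1(f,d^-,d^+,X)(v)=d_1^+(v)-d_1^-(v)$ and $\Phi_2(f,d^-,d^+,X)(v)=\nabla f(v)$. On the submonoid $\QQ\subset\TT$ (a form $q$ with its canonical adapted pair and $X=\nabla q$) both restrict to $q\mapsto\nabla q$, as already checked in the text, so we aim for a homotopy of monoid maps that is constant on $\QQ$. For $s\in[0,1]$ set
\[ H_s(f,d^-,d^+,X)(v)=(1-s)\bigl(d_1^+(v)-d_1^-(v)\bigr)+s\,\nabla f(v). \]
Since direct sum of adapted pairs (and of the vector fields $X$) is again of the same kind, $H_s$ is a monoid map for each $s$; it is visibly constant on $\QQ$, equal to $\Phi_1$ at $s=0$ and to $\Phi_2$ at $s=1$. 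The content is therefore that $H_s(f,d^-,d^+,X)$ is again a self-homotopy equivalence of $S^{n-1}$, i.e. that $H_s(\cdots)(v)\ne 0$ for $v\ne 0$; equivalently, that $d_1^+(v)-d_1^-(v)$ is never a negative scalar multiple of $\nabla f(v)$. (Nonvanishing at $s=0$ is the observation preceding Definition~\ref{dfn:dualitymaps}, and at $s=1$ it is the non-degeneracy of $f$.)

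To prove this I would first treat the case where $f=q$ is a genuine quadratic form: decomposing $v$, $d_1^+(v)$ and $d_1^-(v)$ along the eigenspace splitting of $q$, the monotonicity axioms force $d_1^+(v)$ to lie on the positive side of $v$ and $d_1^-(v)$ on the negative side — more precisely the $q$-monotone path joining $v$ to $d_1^\pm(v)$ cannot leave $\{\pm q\ge \pm q(v)\}$ — and one checks, using Euler's identity $\langle\nabla q(v),v\rangle=2q(v)$, that these constraints on the angular positions and norms of $d_1^{\pm}(v)$ are incompatible with $d_1^+(v)-d_1^-(v)$ being antiparallel to $\nabla q(v)=v_+-v_-$. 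For a general tube-type $f$ the level sets are no longer linear cones, and this is where the vector field $X$ is used: $X$ is a homogeneous, everywhere-uphill field for $f$, and by Lemma~\ref{lem:forgetilde} the forgetful map from $\TT$ to the space of pairs $(f,X)$ is a homotopy equivalence with contractible fibers, so the given adapted pair may be replaced — continuously in $(f,X)$, compatibly with $\oplus$, and rel $\QQ$ — by one adapted to the graph of $X$, for which $X$ straightens the picture near $S^{n-1}$ and the transversality argument from the form case goes through, showing that $d_1^+(v)-d_1^-(v)$ crosses $\{f=f(v)\}$ in the $+\nabla f(v)$ direction.

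The main obstacle is precisely this passage from forms to general tube-type functions: the straightening must keep everything homogeneous of degree $1$, be continuous over the simplicial set $\TT$, respect direct sums, and never send a nonzero vector to the origin — the delicate region being the eigen-locus of $f$, where the natural push direction is radial and a naive flow along $\pm\nabla f$ or $\pm X$ would run into $0$. A secondary, more routine point is to arrange the replacement of adapted pairs in the previous paragraph so that the resulting homotopy of maps $\TT\to\GG$ is through monoid maps rather than merely objectwise, which is possible because direct sum of adapted pairs is again an adapted pair and the spaces in play are well-pointed.
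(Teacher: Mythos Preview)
Your affine interpolation $H_s$ does not stay in $\GG$. Here is a counterexample already for a quadratic form. Take $q(x,y,z)=\tfrac12(x^2-y^2-z^2)$ on $\R^3$ with the adapted pair
\[
d_s^+(x,y,z)=(x,(1-s)y,(1-s)z),\qquad d_s^-(x,y,z)=\bigl((1-s)x,\,R_{\pi s}(y,z)\bigr),
\]
where $R_\theta$ is rotation by $\theta$ in the $(y,z)$-plane, and $X=\nabla q$. All five axioms for an adapted pair and all three axioms for $X$ are easily verified. At $v=(0,1,0)$ one has $d_1^+(v)=0$ and $d_1^-(v)=(0,-1,0)$, hence $d_1^+(v)-d_1^-(v)=(0,1,0)=-\nabla q(v)$, so $H_{1/2}(v)=0$. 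Thus the ``one checks'' step for forms already fails: the monotonicity axioms do \emph{not} prevent $d_1^+(v)-d_1^-(v)$ from being antiparallel to $\nabla q(v)$, because $d_s^-$ can rotate freely within a level set of $q$.

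Your proposed fix---replacing the adapted pair by one ``adapted to the graph of $X$''---is not a proof. You do not say which pair you mean, why the replacement can be made continuously in families while respecting $\oplus$, or why the linear interpolation would succeed afterwards (the pair above is as tame as one could hope for). More fundamentally, homotoping the adapted pair inside the fibre of $\TT\to\T^q$ is exactly the content of the proposition; invoking contractibility of that fibre does not produce a homotopy of monoid maps $\TT\to\GG$ unless you exhibit one by an explicit formula that stays in $\GG$ throughout.

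The paper's argument avoids all of this by a two-step homotopy that uses $X$ as a pivot rather than as a straightening device. First run
\[
s\longmapsto d_{1-s}^+\bigl(v+sX(v)\bigr)-d_{1-s}^-(v),\qquad s\in[0,1],
\]
which is nonzero because for $s>0$ the monotonicity axioms give
\[
f\bigl(d_{1-s}^+(v+sX(v))\bigr)\ \ge\ f\bigl(v+sX(v)\bigr)\ >\ f(v)\ \ge\ f\bigl(d_{1-s}^-(v)\bigr),
\]
so the two points cannot coincide; at $s=1$ this equals $X(v)$. Then interpolate affinely from $X$ to $\nabla f$, which is nonzero since $df(X)>0$ and $df(\nabla f)>0$ away from the origin. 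Both steps are visibly compatible with direct sum. The third axiom on $X$---that $f(v+tX(v))>f(v)$---is used precisely to drive the first step; it is part of the homotopy formula itself, not a tool for modifying $(d^-,d^+)$.
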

\begin{proof}
For $(f,d^+,d^-,X) \in \TT$, we define a homotopy between $d_1^+(v)-d_1^-(v)$ and $\nabla f(v)$
in two steps:
\begin{enumerate}
\item $d_{1-s}^+(v+sX(v))-d_{1-s}^-(v)$ for  $s\in[0,1]$, and
\item $(1-s)X(s)+s\nabla f(v)$ for $s\in [0,1]$.
\end{enumerate}
We only need to check that the expression is non-vanishing for $v\neq 0$ and hence defines a homotopy in $\GG$.
The second step is clear since $df(X)>0$ and $df(\nabla f)>0$ away from $0$. For the first one we use the function $f$: for $s\in(0,1]$, 
\[f(d_{1-s}^+(v+sX(v)))\geq f(v+sX(v))>f(v)\geq f(d_{1-s}^-(v))\]
and thus $d_{1-s}^+(v+sX(v))\neq d_{1-s}^-(v)$.
Observe that this is compatible with the direct sum so we obtain the desired homotopy through monoid maps.
\end{proof}

We define the colimit
$$\GG_\infty = \colim \left( \GG \to \GG \to \cdots \right) $$
using direct sum on the left by $\nabla q_{1,1}=\tnabla q_{1,1}$.
\begin{lemma}\label{lem:colimG/O}
  The map $c: B(\GG,\OO) \to B(G,O)$ from Equation~(\ref{eq:twisted-derivative:1}) factors up to homotopy through the natural map $B(\GG,\OO) \to B(\GG_\infty,\OO)$. 
\end{lemma}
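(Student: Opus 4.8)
The plan is to realize $B(\GG_\infty,\OO)$ as a sequential homotopy colimit and then factor $c$ by the universal property. The monoid map $s\co\GG\to\GG$, $g\mapsto \nabla q_{1,1}\oplus g$, that defines $\GG_\infty$ commutes with the right $\OO$-action, so it induces a levelwise injective simplicial self-map $s_*$ of $B_\bullet(\GG,\OO)$; since the bar construction and geometric realization commute with this sequential colimit we get
\[B(\GG_\infty,\OO)\;\simeq\;\colim\bigl(B(\GG,\OO)\xrightarrow{s_*}B(\GG,\OO)\xrightarrow{s_*}\cdots\bigr),\]
with $j\co B(\GG,\OO)\to B(\GG_\infty,\OO)$ the canonical map, induced by $\GG\hookrightarrow\GG_\infty$. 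Modelling this colimit by the mapping telescope, a map out of $B(\GG_\infty,\OO)$ whose restriction along $j$ is homotopic to $c$ is exactly the datum of $c$ together with a homotopy $c\simeq c\circ s_*$. So the statement reduces to producing such a homotopy.

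To that end I would analyze $c\circ s_*$ through the dictionary between $\OO$-twisted maps to $\GG$ and vector bundles with spherical trivialization recalled before Proposition~\ref{prop:recognizeNI}. If $(g_i,o_{ij})$ is sent by $c$ to the pair $(E,\theta)$, then $s_*(g_i,o_{ij})=(\nabla q_{1,1}\oplus g_i,o_{ij})$: the overlap maps are unchanged apart from a padding by $\id_2$ on the left, so the associated bundle is $\underline{\R}^2\oplus E$ with the extra summand glued trivially, and its spherical trivialization is $(\nabla q_{1,1})\wedge\theta$. Since $\nabla q_{1,1}=\operatorname{diag}(-1,1)$ is a linear (orientation-reversing) automorphism of $\R^2$, the bundle isomorphism $\nabla q_{1,1}\oplus\id_E$ of $\underline{\R}^2\oplus E$ carries $\bigl(\underline{\R}^2\oplus E,(\nabla q_{1,1})\wedge\theta\bigr)$ onto $\bigl(\underline{\R}^2\oplus E,\id_2\wedge\theta\bigr)$, which is the standard stabilization of $(E,\theta)$. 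Hence $c\circ s_*$ and $c$ agree on $[M,B(G,O)]$ for every compact $M$, by the same mechanism as in Lemma~\ref{lem:minusone}; note that, in contrast with that lemma, there is no parity subtlety here, since the extra summand is literally trivial.

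It remains to upgrade this bundle-level comparison into an honest homotopy between the two maps of (realizations of) bar constructions, and that is the step I expect to require the most care. The identifications above are given by fixed formulas — the orientation-reversing automorphism $\nabla q_{1,1}\oplus\id$ and the destabilization of a trivial summand — that depend continuously on the simplex parameters, so carrying them out simplexwise over $B_\bullet(\GG,\OO)$ should yield the desired simplicial homotopy $c\simeq c\circ s_*$; making this bookkeeping precise, mirroring the proof of Lemma~\ref{lem:minusone}, is the main obstacle. Once the homotopy is in hand, the telescope description produces $\bar c\co B(\GG_\infty,\OO)\to B(G,O)$ with $\bar c\circ j\simeq c$, which is the claimed factorization.
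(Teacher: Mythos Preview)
Your approach is essentially the same as the paper's: both reduce the factorization to the homotopy commutativity of
\[
\begin{tikzcd}
B(\GG,\OO) \arrow[r,"s_*"] \arrow[dr,"c"'] & B(\GG,\OO) \arrow[d,"c"] \\
& B(G,O)
\end{tikzcd}
\]
and justify it by observing that left-stabilizing by $\nabla q_{1,1}$ does not change the stable equivalence class of the associated vector bundle with spherical trivialization. The paper's proof is a single sentence to this effect; you spell out the telescope interpretation of the colimit and the bundle-level comparison explicitly, which is fine.

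One small correction: $s\colon g\mapsto \nabla q_{1,1}\oplus g$ is \emph{not} a monoid map (check $s(g_1\oplus g_2)\neq s(g_1)\oplus s(g_2)$). What you actually need, and what you immediately state, is that $s$ is right $\OO$-equivariant; that suffices to induce $s_*$ on the bar construction. Your concern about upgrading the bundle-level equivalence to an honest homotopy of maps is not a real obstacle: the bundle isomorphism $\nabla q_{1,1}\oplus\id$ and the coordinate swaps are given by fixed elements of $O$, so the comparison is implemented by a uniform path in $G\times O$ acting on $B_\bullet(G,O)$, and the paper treats this step as routine.
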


\begin{proof}
  This follows by realizing that the diagram
  \begin{center}
    \begin{tikzcd}
      B(\GG,\OO) \arrow[r,"\nabla q_{1,1} \oplus \cdot "] \arrow[dr,"c"] & B(\GG,\OO) \arrow[d,"c"] \\
      & B(G,O)
    \end{tikzcd}
  \end{center}
  homotopy commutes. Indeed, the stabilization does not change the stable equivalence class of the associated vector bundle with trivialized spherical fibration. 
\end{proof}

Using Lemma~\ref{lem:forgetilde}, Proposition~\ref{prop:B(G/O)} and Lemma~\ref{lem:colimG/O}, we can define
(up to homotopy) the \emph{twisted duality map} which we refered to in Section~\ref{sec:intro}.

\begin{equation}\label{eq:twistedduality}
    B(G/O) \simeq B(\TT_\infty^\TOP,\QQ) \xrightarrow{\tnabla} B(\GG_\infty,\OO) \xrightarrow{c} B(G,O)=G/O.
\end{equation}

We now prove our second main result:

\begin{proof}[Proof of Theorem \ref{thm:main duality} ]
  By Proposition \ref{prop:derivativesagree}, we have a homotopy commutative diagram:
  
  \begin{center}
    \begin{tikzcd}
      B(\TT,\QQ) \arrow[r]\arrow[d] & B(\TT^\TOP,\QQ) \arrow[d,"\tnabla"] \\
      B(\T,\QQ) \arrow[r,"\nabla"] & B(\GG,\OO).
    \end{tikzcd}
  \end{center}
  The left vertical map is a homotopy equivalence in view of Lemma~\ref{lem:forgetilde}. By taking the colimit of the right vertical map and using the lemma above we get a factorization of $\nabla : B(\T,\QQ) \to B(\GG,\OO) \xrightarrow{c} B(G,O)$ as
  \begin{align*}
    B(\T,\QQ) \simeq B(\TT,\QQ) \to B(\TT_\infty^\TOP,\QQ) \to B(\GG_\infty,\OO) \to B(G,O).
  \end{align*}
  The composition of the last two maps is what we defined as the twisted duality map.
\end{proof}

\subsection{2-torsionness}
Here we prove Theorem~\ref{thm:main torsion} which states that the twisted duality map \eqref{eq:twistedduality} is $2$-torsion.

Consider the monoid map $\delta : \T^\TOP \to \T^\TOP$ which assigns to a tube $f\in T_n^\TOP$ the tube $\delta f \in T_{2n}^\TOP$ of twice the dimension
defined by 
\[\delta f=(f\oplus (-f))\circ A_n^{-1},\] namely
\[\delta f (x_1,y_1,\dots,x_n,y_n)=f(x_1,\dots,x_n)-f(y_1,\dots,y_n).\]
The map $\delta$ is similarly defined on $\TT^\TOP$ for the extra structure $(d^-,d^+)$. It also preserves the submonoid $\QQ$ (with or without the extra $(d^-,d^+)$).

It does not commute with the left stabilizations by $q_{1,1}$ (with the canonical $(d^-,d^+)$). We therefore define
\begin{align*}
  \TT^\TOP_{\delta\infty} = \colim(\TT^\TOP \to \TT^\TOP \to \cdots)
\end{align*}
using $\delta(q_{1,1})$ instead of $q_{1,1}$. We have not defined $\delta$ on $\E$ nor will we. However, we do define $\delta(e_{1,1}) = (\delta(q_{1,1}),u')$ where $u'(x_1,y_2) = (x_1,0,0,y_2)$, and use stabilization by this element to define
\begin{align*}
  \E_{\delta \infty} = \colim(\E \to \E \to \cdots).
\end{align*}
As stabilization by $\delta(q_{1,1})$ is homotopic to stabilization by $q_{1,1} \oplus q_{1,1}$, it follows from Proposition~\ref{prop:twisted-derivative:3} that 
we also have
\[\E_{\delta \infty} \simeq \FF_{\delta \infty} \simeq \Z^2.\]

\begin{lemma}\label{lem:deltanull}
  The map $\delta : \TT_\infty^\TOP\to \TT_{\delta \infty}^\TOP$ is nullhomotopic, and so is the induced map $\delta :B(\TT_\infty^\TOP,\QQ) \to \B(\TT_{\delta \infty}^\TOP,\QQ)$.
\end{lemma}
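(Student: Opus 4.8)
The strategy is to lift $\delta$ through the homotopy-discrete space $\E_{\delta\infty}$, using the adapted deformation retractions to produce canonical ``$u$-data'' on the doubled tube, and then to deduce nullhomotopy both on the space level and on the bar construction level.

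\textbf{Step 1: a canonical lift to $\E_{\delta\infty}$.} Let $(f,d^-,d^+)\in\widetilde{T}^\TOP_n$. After reordering by $A_n$ the doubled tube is $\delta f(x,y)=f(x)-f(y)$ on $\R^n\times\R^n$, with $\{\delta f<0\}=\{f(x)<f(y)\}$. Define $u_\delta:\R^n\to\R^n\times\R^n$ by $u_\delta(v)=(d^-_1(v),d^+_1(v))$; it is homogeneous of degree $1$. The key point is the strict inequality $f(d^-_1(v))<f(d^+_1(v))$ for all $v\neq 0$: properties (3) and (5) of an adapted pair give $f(d^-_1(v))\le\min(0,f(v))$ and $f(d^+_1(v))\ge\max(0,f(v))$, and property (4) upgrades one of these to a strict inequality in the remaining case $f(v)=0$. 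Hence $u_\delta$ takes values in $\{\delta f<0\}$ and $(\delta f,u_\delta)\in\E$. One further checks that $u_\delta$ induces a homotopy equivalence $\R^n\setminus\{0\}\to\{\delta f<0\}$: for $f=q_{k,l}$ with its canonical adapted pair this map is precisely the inclusion of the negative eigenspace of $\delta q_{k,l}$ (indeed $u_\delta(v_-,v_+)=(v_-,0,0,v_+)$), and the general case is obtained by transporting along the trivializing isotopy of Definition~\ref{def:Ttop}. The construction is additive and, under the canonical adapted pair for $q_{1,1}$, sends the source stabilizing element to $\delta(e_{1,1})=(\delta q_{1,1},u')$ with $u'(x_1,y_2)=(x_1,0,0,y_2)$. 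It therefore descends to a map $\widetilde\delta:\TT^\TOP_\infty\to\E_{\delta\infty}$ lifting $\delta$, and a $\QQ$-equivariant refinement lifts the $\QQ$-action (the action on $\E_{\delta\infty}$ being induced by the doubling $\delta|_\QQ:\QQ\to\QQ$).

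\textbf{Step 2: conclusion.} Since $\E_{\delta\infty}\simeq\FF_{\delta\infty}\simeq\Z^2$ is homotopy discrete, $\widetilde\delta$ — and hence $\delta$, which is $\widetilde\delta$ followed by the forgetful map — is homotopic to a map that factors through $\pi_0(\TT^\TOP_\infty)$; it is constant on each component, in particular null on the basepoint component. (The map $\delta|_\QQ$ is null in the same spirit: $q\mapsto\delta q$ is, on Grassmannians, $E\mapsto E\oplus E^\perp$, which lifts to the stably contractible Stiefel manifold via $v\mapsto(v_E,v_{E^\perp})$.) Passing to bar constructions with the $\QQ$-equivariant lift of Step~1, the induced map $B(\TT^\TOP_\infty,\QQ)\to B(\TT^\TOP_{\delta\infty},\QQ)$ factors up to homotopy through $B(\E_{\delta\infty},\QQ)$, hence through the homotopy quotient of the discrete space $\pi_0(\TT^\TOP_\infty)$ by $\pi_0(\QQ)$. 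As in Proposition~\ref{prop:B(G/O)} the source $B(\TT^\TOP_\infty,\QQ)\simeq B(G/O)$ is connected, so the map is determined up to homotopy by the image of its basepoint, which lands in a single point of the target; hence it is nullhomotopic.

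\textbf{Main obstacle.} The genuinely delicate step is verifying, for an \emph{arbitrary} adapted pair (not one transported from a quadratic form), that $v\mapsto(d^-_1(v),d^+_1(v))$ is a homotopy equivalence onto $\{\delta f<0\}$, and that this holds continuously over simplices and stably in the colimit; the trivializing isotopy of Definition~\ref{def:Ttop} is not canonical, so this must be argued robustly — e.g. by a deformation-retraction argument internal to $\{\delta f<0\}$, or by exploiting contractibility of the space of adapted pairs compatible with a fixed trivialization. The remaining work is bookkeeping: matching the two stabilizations (by $q_{1,1}$ versus by $\delta q_{1,1}$) and carrying the $\QQ$-equivariance through to the bar constructions, which rests on the identifications $\E_{\delta\infty}\simeq\FF_{\delta\infty}\simeq\Z^2$ recorded above together with the analogue of Proposition~\ref{prop:B(G/O)} for the $\delta$-stabilized colimits.
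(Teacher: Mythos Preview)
Your core idea---constructing the lift $u_\delta(v)=(d_1^-(v),d_1^+(v))$ and factoring $\delta$ through the homotopy-discrete space $\E_{\delta\infty}$---is exactly the paper's approach, and your space-level argument for the first statement is correct. The check that $u_\delta$ is a homotopy equivalence is also handled as the paper does: verify it for $f\in\QQ$ with the canonical adapted pair, then use connectedness of each $T^\TOP_{k,l}$ (you phrase this as ``transporting along the trivializing isotopy'', which is fine; the paper simply invokes homotopy invariance).

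There is, however, a genuine gap in your bar-construction argument. You factor through $B(\E_{\delta\infty},\QQ)$ and then assert that this is ``the homotopy quotient of the discrete space $\pi_0(\TT^\TOP_\infty)$ by $\pi_0(\QQ)$'', concluding nullhomotopy because the source is connected. But $B(\E_{\delta\infty},\QQ)$ is \emph{not} homotopy discrete: by Proposition~\ref{lem:bar3} it fibers over $B\QQ$ with fiber $\E_{\delta\infty}\simeq\Z^2$, and $B\QQ$ has $\Omega B\QQ\simeq\QQ_\infty\simeq\Z^2\times BO$, so the total space inherits the higher homotopy of $BO$. A map from a connected space into such a target is certainly not determined by the image of the basepoint.

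The paper repairs this with the observation you nearly make in Step~1 but do not exploit: the restriction $\widetilde\delta|_\QQ$ lands in $\FF\subset\E$ (for $q$ with canonical adapted pair, $u_\delta$ is the linear inclusion of the negative eigenspace of $\delta q$). This upgrades the lift to a map of pairs $(\TT^\TOP,\QQ)\to(\E,\FF)$, hence a factorization through $B(\E_{\delta\infty},\FF)$. This space \emph{is} contractible: since $\E_{\delta\infty}\simeq\FF_{\delta\infty}$, one has $B(\E_{\delta\infty},\FF)\simeq B(\FF_{\delta\infty},\FF)=\colim B(\FF,\FF)$, and each $B(\FF,\FF)$ is contractible by Proposition~\ref{lem:bar0}. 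You mention $\FF_{\delta\infty}$ in your ``Main obstacle'' paragraph, so you have the ingredient; the point is that the monoid in the intermediate bar construction must be $\FF$, not $\QQ$.
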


\begin{proof}
In view of Lemma~\ref{lem:forgetilde}, it is enough to show that the composition of $\delta$ with the forgetful map $\TT_{\delta \infty}^\TOP \to \T_{\delta \infty}^\TOP$ is nullhomotopic.
For $(f,d^-,d^+)\in \widetilde{T}_n^\TOP$, we consider the map $u:\R^n\setminus\{0\}\to \{\delta f < 0\}$ defined by
\[u(v)=A_n(d_1^+(v),d_1^-(v)).\]
It is well defined since $f(d_1^+(v))>f(d_1^-(v))$ for all $v\neq 0$ as we saw above.
For $f\in \QQ$ this map is a homotopy equivalence and thus it is a homotopy equivalence for any $f\in \TT^\TOP$.
Hence this map $u$ provides a canonical lift of $\delta$ to the highly connected space $E_{n,n}$.

Moreover we observe that this lift to $\E$ is compatible with direct sums.
For $f\in \QQ$, the exact same formula provides a lift of $\delta : \QQ\to \QQ$ through $\FF$, so we
obtain the following commutative diagram of monoids:

\begin{center}
  \begin{tikzcd}
    \QQ \arrow[r]\arrow[d] & \FF \arrow[r]\arrow[d]& \QQ\arrow[d] \\
    \TT^\TOP \arrow[r] &\E \arrow[r]& \T^\TOP
  \end{tikzcd}
\end{center}
where the horizontal compositions are the maps $\delta$. In particular we obtain a factorization of $\delta$ on the quotient:
\[B(\TT_\infty^\TOP,\QQ)\to B( \E_{\delta \infty},\FF)\to B( \T_{\delta \infty}^\TOP,\QQ)\]
and the second statement follows since 
\[B( \E_{\delta \infty},\FF)\simeq B( \FF_{\delta \infty},\FF) = B(\colim \FF,\FF)\simeq\colim B(\FF,\FF)\] is a contractible space.
\end{proof}

We now complete the proof of the last of the main results from the introduction:

\begin{proof}[Proof of Theorem \ref{thm:main torsion}]
  We define
  $$\GG_{\delta \infty} = \colim \left( \GG \to \GG \to \cdots \right) $$
  using the left action by $\nabla \delta(q_{1,1})$. Then we have a commuting diagram
  \begin{center}
    \begin{tikzcd}
      B(\TT^\TOP_\infty,\QQ) \arrow[r,"\tilde \nabla"] \arrow[d, "\delta"] & B(\GG_\infty, \OO) \arrow[d, "\delta"] \\
      B(\TT^\TOP_{\delta \infty},\QQ) \arrow[r,"\tilde \nabla"]  & B(\GG_{\delta \infty}, \OO)
    \end{tikzcd}  
  \end{center}
  where the right vertical $\delta$ is the one defined in Section~\ref{sec:norm-invar-models}. Again the map $c: B(\GG_{\delta \infty} ,\OO) \to B(G,O)$ is well defined as stabilization does not change the stable equivalence class of the vector bundle with spherical trivialization. One way around the diagram represents, by Lemma~\ref{lem:twice}, twice the twisted duality map. The other way around is null homotopic by Lemma~\ref{lem:deltanull}.
\end{proof}

\section{Applications}\label{sec:appli}

In this final section we extract some concrete consequences of our main results by comparing with the surgery theory literature. Our applications will be formulated in terms of the
\emph{simple structure set} $\SS^s(M)$ defined in the introduction, where we recall that $\SS^s_\NL(M)$ denotes the subset of $\SS^s(M)$ consisting of (equivalence classes of) pairs $(L,\pi)$
where $L$ is a closed manifold and $\pi=\pi_M\circ \psi$ for $\pi_M\colon T^*M \to M$ the
projection and $\psi\colon L \to T^*M$ an exact Lagrangian embedding.

\subsection{Hard versus soft obstructions}\label{sec: hard/soft}

If the nearby Lagrangian conjecture holds then $\SS^s_\NL(M)$ is reduced to the base point $(M,\id)$. We will apply our results to exhibit concrete obstructions for an element of $\SS^s(M)$ to lie in $\SS^s_\NL(M)$. Moreover we will exhibit {\em hard} obstructions, with respect to the hard/soft dichotomy, which we briefly recall.

Given a geometric problem, for example the problem of constructing an immersion of a smooth manifold $X$ into another smooth manifold $Y$, one may formulate an underlying {\em formal} problem which often (but not always) is obtained by decoupling the function under consideration from its derivatives. For example, the formal problem underlying the geometric problem of constructing an immersion $X \hookrightarrow Y$ is the problem of constructing a bundle monomorphism $TX \to TY$. Following Gromov, when any solution to the formal problem may be deformed to a solution to the original geometric problem, one says that there holds an h-principle, or that the problem is flexible. For example, the problem of constructing an immersion is flexible when $\dim X < \dim Y$, this is the Hirsch-Smale-Whitney immersion theory.

Although a surprising number of problems in symplectic and contact geometry abide by an h-principle, many do not, and the problem of constructing Lagrangian embeddings is certainly not flexible. For problems which are not flexible, one may still consider the underlying formal problem, and any obstruction to the solution of the problem which is already present at the formal level is said to be {\em soft}. In contrast, any obstruction to the solution of the problem which is not present at the formal level is said to be {\em hard}. We refer the reader to \cite{EM02} for further discussion of the hard/soft dichotomy.

In this section we will apply our results to give new obstructions for an element of $\SS^s(M)$ to lie in $\SS^s_\NL(M)$. Before doing so, we will discuss soft obstructions to this problem and state readily verifiable sufficient conditions for the absence of soft obstructions. When our main results are applied to obstruct an element of $\SS^s(M)$ from lying in $\SS^s_\NL(M)$ in a situation where these sufficient conditions hold, we will know that the obstruction is hard. 

A first obstruction to the problem of realizing an element $[\pi:L \to M]$ of $\SS^s(M)$ in $\SS^s_\NL(M)$ is whether $\pi$ factors (up to homotopy) as the composition of a Lagrangian immersion $L \to T^*M$ and the projection $\pi_M:T^*M \to M$. The problem of constructing Lagrangian immersions is fully flexible: it abides by an h-principle, and indeed reduces to asking that the complex vector bundle $\pi^* (TM \otimes \C)$ is isomorphic to $TL \otimes \C$. In a different direction, one may forget about symplectic geometry and consider the obstruction to realizing an element $[\pi:L \to M]$ of $\SS^s(M)$ in $\SS^s_\NL(M)$ given by factoring $\pi$ (up to homotopy) as the composition of a smooth embedding $L \to T^*M$ and the projection $\pi_M:T^*M \to M$. These two obstructions may be combined into a single obstruction, namely that of factoring $\pi$ (up to homotopy) as the composition of a totally real embedding $L \to T^*M$ and the projection $\pi_M:T^*M \to M$, which is indeed a standard formulation of the formal problem underlying the construction of a Lagrangian embedding. The situation can be summarized in the following result. 

\begin{proposition}\label{prop:soft}
If $(L,\pi)\in \SS^s_\NL(M)$, then the following conditions are satisftied
\begin{enumerate}
\item there exists a totally real embedding $L\to T^* M$ homotopic to $\pi$,
\item there exists an immersion $L \to T^* M$ homotopic to $\pi$ which is both regularly homotopic to a Lagrangian immersion and
to a smooth embedding,
\item the complex vector bundle $\pi^* (TM \otimes \C)$ is isomorphic to $TL \otimes \C$. 
\end{enumerate}
Moreover the first two conditions are equivalent and imply the third.
\end{proposition}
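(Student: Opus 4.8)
The plan is to reduce everything to the elementary observation that a Lagrangian submanifold of $T^*M$ is automatically totally real for the canonical compatible almost complex structure. If $(L,\pi)\in\SS^s_\NL(M)$ with $\pi=\pi_M\circ\psi$ for an exact Lagrangian embedding $\psi\colon L\to T^*M$, then $\psi$ is in particular a totally real embedding, and composing with the deformation retraction $(x,p)\mapsto(x,tp)$ of $T^*M$ onto its zero section gives a homotopy from $\psi$ to $\pi$. Hence (1) holds with witness $\psi$, and $\psi$ witnesses (2) just as directly (a Lagrangian embedding is an immersion homotopic to $\pi$, and it is trivially regularly homotopic to a Lagrangian immersion, namely itself, and to a smooth embedding, again itself). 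It therefore remains to prove the ``Moreover'' part: that for an arbitrary homotopy equivalence $\pi\colon L\to M$ of closed $n$-manifolds, (1) and (2) are equivalent and (1) implies (3).

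For $(1)\Rightarrow(3)$, let $j\colon L\to T^*M$ be a totally real embedding homotopic to $\pi$. Then $dj$ identifies $TL\otimes\C$ with $j^*T(T^*M)$ as complex vector bundles, since being totally real means precisely that $dj(TL)$ together with its image under the complex structure spans $j^*T(T^*M)$. On the other hand, a Riemannian metric on $M$ splits $T(T^*M)$ into its horizontal and vertical subbundles $\pi_M^*TM\oplus\pi_M^*T^*M$, and the canonical almost complex structure interchanges these two summands, so $T(T^*M)\cong\pi_M^*(TM\otimes\C)$ as complex bundles. As $\pi_M\circ j\simeq\pi$, pulling back along $j$ gives $TL\otimes\C\cong\pi^*(TM\otimes\C)$, which is (3); once $(2)\Rightarrow(1)$ is known this also shows that (2) implies (3).

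For $(1)\Leftrightarrow(2)$: in the forward direction we take the immersion in (2) to be $j$ itself, which is an immersion homotopic to $\pi$ and a smooth embedding, so only the assertion that it is regularly homotopic to a Lagrangian immersion needs an argument. The fiberwise totally real bundle monomorphism $dj\colon TL\to j^*T(T^*M)$ can be deformed, through bundle monomorphisms, to a fiberwise Lagrangian one, because the space of totally real $n$-planes in $\C^n$ deformation retracts onto the Lagrangian Grassmannian $U(n)/O(n)$; this fiberwise Lagrangian monomorphism is the formal data of a genuine Lagrangian immersion $\ell$ by the Gromov--Lees $h$-principle, and since $d\ell$ and $dj$ are then homotopic as formal immersions, the Hirsch--Smale theorem (applicable as $\dim L<\dim T^*M$) shows $j$ and $\ell$ are regularly homotopic. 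Conversely, given $g$ as in (2), pick a Lagrangian immersion $\ell$ and a smooth embedding $e$ each regularly homotopic to $g$; transporting $d\ell$ back along the regular homotopy shows that $de$ is homotopic, through bundle monomorphisms over $e$, to a fiberwise totally real one, so $e$ underlies a \emph{formal} totally real embedding. Applying the $h$-principle for totally real embeddings then deforms $e$ to an honest totally real embedding $C^0$-close to $e$; being $C^0$-close to the embedding $e$ of the closed manifold $L$ it is still an embedding, and being $C^0$-close to $e\simeq\pi$ it is homotopic to $\pi$, which gives (1).

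The one point requiring genuine care, and the main obstacle, is the $h$-principle for totally real embeddings invoked in $(2)\Rightarrow(1)$. It rests on the fact that the first order relation defining totally real immersions of $n$-manifolds into an almost complex $2n$-manifold is open and \emph{ample}: after fixing all but one partial derivative of a formal solution, the admissible values of the remaining partial form the complement of a complex hyperplane in a tangent space $\cong\C^n$, which is connected and has convex hull all of $\C^n$, so Gromov's convex integration applies and produces a $C^0$-dense $h$-principle, valid with $T^*M$ as target because the construction is purely local. The remaining ingredients --- the homotopy equivalence between the totally real and Lagrangian Grassmannians, the Gromov--Lees and Hirsch--Smale $h$-principles, and the complex-linear splitting of $T(T^*M)$ --- are standard, and the only bookkeeping is to check that the various deformations keep the underlying map inside the homotopy class of $\pi$, which holds because they are either $C^0$-small or take place inside $T^*M$, which deformation-retracts onto $M$.
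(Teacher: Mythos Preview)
Your argument follows the same outline as the paper's proof: Lagrangian implies totally real (giving (1) and (2) directly from the hypothesis), $(1)\Leftrightarrow(2)$ via the deformation retraction of the totally real Grassmannian onto the Lagrangian Grassmannian together with the Gromov--Lees h-principle and the h-principle for totally real embeddings, and (3) via the identification $T(T^*M)\cong\pi_M^*(TM\otimes\C)$. The only cosmetic difference is that the paper deduces (3) from (2) using that the normal bundle of a Lagrangian immersion is $T^*L$, whereas you deduce it from (1) via the complex-linear splitting coming from the totally real condition; both are fine.

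One caveat: in your sketch of why the h-principle for totally real embeddings holds, you assert that an immersion $C^0$-close to an embedding of a closed manifold is itself an embedding. This is false in general --- a $C^0$-small figure-eight perturbation of an embedded circle in $\R^2$ is an immersion with a double point --- so the $C^0$-dense convex-integration h-principle for the ample \emph{immersion} relation does not by itself yield the embedding statement. The h-principle for totally real embeddings is nonetheless a known result (the paper also invokes it without proof), so this does not affect the correctness of your proof, only the accuracy of that one side-remark.
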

\begin{proof}
The first condition holds for an exact Lagrangian embedding $L\to T^* M$ since a Lagrangian is totally real.
It implies the second condition in view of the h-principle for Lagrangian immersions since a totally real embedding
can be made into a formal Lagrangian immersion (the totally real Grassmannian deformation retracts on the Lagrangian
Grassmannian). To see that the second condition implies the first, consider an embedding $\psi\colon L\to T^* M$ which is regularly homotopic to a Lagrangian immersion. Since $\psi$ is formally totally real (i.e. $d\psi$ is homotopic to a totally real monomorphism in the space of monomorphisms $TL \to T(T^*M)$
covering $\psi$), the map is thus isotopic to a totally real embedding in view of the h-principle for totally real embeddings.
The second condition implies the third in view of the following isomorphisms:
\[\pi^* (TM\otimes \C) \simeq\psi^*(TT^* M)\simeq TL\oplus T^* L \simeq (TL\otimes \C).\]
\end{proof}

The third condition of Proposition~\ref{prop:soft} is equivalent to the triviality of
the map $L\to BU(n)$ classifying the complex vector bundle $(\pi^*TM -TL)\otimes \C$.
Since we are in stable range this is the same as considering the stabilized map $L\to BU$.
We do not know if this condition implies the two other ones
of Proposition~\ref{prop:soft} in the general case but here are some cases
where this holds. The arguments are essentially extracted from \cite{A88}.

\begin{proposition}\label{prop:softmore}
Let $(L,\pi) \in \SS^s(M)$ such that $\pi^* (TM \otimes \C)\simeq TL \otimes \C$.
If one of the following conditions holds then $\pi$ is homotopic to a totally real embedding $L\to T^*M$:
\begin{enumerate}
\item $M$ is orientable and even-dimensional,
\item $M$ is odd-dimensional, and at least 5 dimensional and stably parallelizable.
\end{enumerate}
\end{proposition}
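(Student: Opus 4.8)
The plan is to reduce the statement to Proposition~\ref{prop:soft}: by the equivalence of its first two conditions it suffices to exhibit a single immersion $\iota\colon L\to T^*M$ homotopic to $\pi$ which is simultaneously regularly homotopic to a Lagrangian immersion and regularly homotopic to a smooth embedding. I would take $\iota$ to be a \emph{totally real} immersion, for then the first property is automatic (a totally real monomorphism is formally a Lagrangian monomorphism, since the totally real Grassmannian deformation retracts onto the Lagrangian one, so the $h$-principle for Lagrangian immersions makes $\iota$ regularly homotopic to an honest Lagrangian immersion). To build $\iota$, note that the standard almost complex structure $J$ on $T^*M$ restricts along the zero section to an isomorphism of complex vector bundles $T(T^*M)|_M\cong TM\otimes\bC$; composing with $\pi$, the hypothesis $\pi^*(TM\otimes\bC)\cong TL\otimes\bC$ supplies a complex bundle isomorphism $TL\otimes\bC\cong \pi^*T(T^*M)$, equivalently a totally real bundle monomorphism $TL\to T(T^*M)$ covering the composite of $\pi$ with the zero section. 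By Gromov's $h$-principle for totally real immersions this formal solution is realized by a genuine totally real immersion $\iota\colon L\to T^*M$ homotopic to $\pi$. Everything then comes down to arranging that $\iota$ be regularly homotopic to an embedding.

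A generic totally real immersion $\iota\colon L^n\to T^*M^{2n}$ has finitely many transverse double points; its normal bundle is $J\cdot d\iota(TL)\cong TL$, while $\iota_*[L]$ is the class of the zero section, whose self-intersection in $T^*M$ equals the Euler number $(-1)^n\chi(M)$. I would then invoke Audin's analysis of double points of totally real (and Lagrangian) immersions in the critical dimension \cite{A88}: for $n\geq 3$ suitably paired double points can be removed by the Whitney trick, and the residual regular-homotopy invariant obstruction to embeddability is a double-point invariant — valued in $\bZ$ when $n$ is even and in $\bZ/2$ when $n$ is odd (I suppress a $\pi_1$-equivariant refinement, which causes no trouble here because $\iota$ is homotopic to a homotopy equivalence onto the embedded zero section) — and Audin's computations express this invariant through the normal bundle and the Euler characteristic.

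It then remains to see that the obstruction vanishes under either hypothesis. If $M$ is orientable and $n$ is even, then $L$ is orientable as well (a homotopy equivalence of closed manifolds pulls back $w_1$), and the double-point number is the integer $\tfrac12(\iota_*[L]\cdot\iota_*[L]-\langle e(\nu_\iota),[L]\rangle)=\tfrac12(\chi(M)-\chi(L))$, which is zero since $\pi$ is a homotopy equivalence and hence $\chi(L)=\chi(M)$. If $L$ and $M$ are odd-dimensional and stably parallelizable, then $\chi(L)=\chi(M)=0$, and by \cite{A88} the $\bZ/2$-valued double-point invariant of $\iota$ is forced to vanish: stable parallelizability is used to trivialize the normal data of $\iota$, after which the invariant reduces to an Arf/Kervaire-type quantity that is zero for framed data arising in this way. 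In both cases the Whitney trick turns $\iota$ into an embedding; $\iota$ remains regularly homotopic to a Lagrangian immersion, so Proposition~\ref{prop:soft} applies and $\pi$ is homotopic to a totally real embedding.

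The step I expect to be the genuine obstacle is the last one in the odd-dimensional case: pinning down exactly which $\bZ/2$ double-point invariant governs embeddability of $\iota$ and checking that stable parallelizability of $L$ and $M$ forces it to vanish — i.e. extracting from \cite{A88} precisely the statement we need — together with the low-dimensional cases $n\leq 2$, where the Whitney trick is unavailable and which must be handled (or excluded) separately.
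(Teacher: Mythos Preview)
Your approach is essentially the paper's: produce a Lagrangian/totally real immersion homotopic to $\pi$, compute the algebraic double-point count, and kill the double points by the Whitney trick. The even-dimensional orientable case you handle correctly and in the same way as the paper (self-intersection of the zero-section class equals the normal Euler number, and both equal $\chi$ up to matching signs since $\chi(L)=\chi(M)$).

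The gap you yourself flag in the odd-dimensional case is genuine, and the paper's argument is \emph{not} the one you sketch. It is not that stable parallelizability trivializes the normal data and makes an Arf/Kervaire-type invariant vanish outright. Rather, one invokes Audin's Theorem~0.5(a): for a stably parallelizable $(2k+1)$-manifold $N$, every Lagrangian immersion $N\to\C^n$ has mod-$2$ double-point number equal to the Kervaire semi-characteristic $\chi_{\Z/2}(N)=\sum_{i=0}^k\dim H_i(N;\Z/2)$. Since $\pi$ is a homotopy equivalence, $\chi_{\Z/2}(L)=\chi_{\Z/2}(M)$, hence $d_{\C^n}(L)=d_{\C^n}(M)$. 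Now use stable parallelizability of $M$ to immerse $M$ (Lagrangianly) in $\C^n$; pushing the immersion $L\to T^*M$ through a tubular neighborhood of $M\to\C^n$ gives an immersion $L\to\C^n$ with $d_{\C^n}(L)=d(L)+d_{\C^n}(M)$ (the map $L\to M$ has odd degree). Subtracting yields $d(L)=0$. So the vanishing comes from a \emph{comparison} of two semi-characteristics, not from an intrinsic triviality of the invariant. Your low-dimensional worry is harmless: for $n\leq 2$ (and indeed $n\leq 4$) the structure set is trivial, so the statement is vacuous there.
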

\begin{proof}
In view of the h-principle for Lagrangian immersions, the bundle isomorphism implies that $\pi$ is homotopic
to a Lagrangian immersion $L\to T^*M$.

Assume that $M$ (and therefore $L$) has even dimension $n$ and is orientable. The double points of an immersion of $L$ in $T^* M$
can be counted algebraically as an integer $d(L)$. Since $[L]=[M]$ (with compatible choices of orientations)
in $H_n(T^* M,\Z)$, the self-intersection of $L$ can be expressed as
\[(-1)^{n/2}\chi(M)=[M]\cdot [M]=[L]\cdot [L]=2 d(L) + e(\nu L),\]
where $e(\nu L)$ is the Euler number of the normal bundle of $L$ in $T^*M$, which due to the Lagrangian condition,
is equal to $e(\nu L)=(-1)^{n/2}\chi(L)=(-1)^{n/2}\chi(M)$ since $L$ and $M$ are homotopy equivalent (the isomorphism $\nu L \to TL$ multiplies the orientation by $(-1)^{\frac{n(n-1)}{2}}$ which is $(-1)^{\frac{n}{2}}$ when $n$ is even).
So we conclude that $d(L)=0$ and that $L\to T^*M$ is regularly homotopic to an embedding by the Whitney trick.

In the second case where $n=2k+1$, $d(L)$ is an integer modulo $2$ and so we need only prove that $L$ has an even number of double points.
From Theorem 0.5 (a) of \cite{A88}, we know that all Lagrangian immersions of $M$ and $L$ in $\C^n$ have $d_{\C^n}(M)=\chi_{\Z/2}(M)=\chi_{\Z/2}(L)=d_{\C^n}(L)$,
where $\chi_{\Z/2} =\sum_{i=0}^k \dim H_i(-;\Z/2)$ is the Kervaire semi-characteristic. Now using a tubular neighborhood of an immersion $M\to\C^n$, we obtain
from the immersion $L\to T^* M$ an immersion in $L\to \C^n$ with $d_{\C^n}(L)=d(L)+d_{\C^n}(M)$ (since $L\to M$ has odd degree).
So we obtain $d(L)=0$ and we conclude with the Whitney trick as before.
\end{proof}

\begin{remark}\label{rem: is soft?}
When $M$ is a homotopy sphere, $L$ and $M$ are stably parallelizable \cite{KM} so the bundles $\pi^*TM\otimes \C$ and $TL\otimes \C$
are stably isomorphic as complex vector bundles, and since we're in the stable range this implies that they're isomorphic as complex vector bundles before stabilization.
So Proposition~\ref{prop:softmore} says that there are no soft obstructions for an element of $\SS^s(M)$ to lie in $\SS^s_\NL(M)$.

For other manifolds $M$, there may well exist elements of $\SS^s(M)$ in a different
\emph{tangential} homotopy type which moreover have non trivial complexification,
leading to soft restrictions on $\SS^s_\NL(M)$. It would be the case if $\pi^*TM - TL$
has non-zero Pontryagin classes, for example for certain fake complex projective spaces.\end{remark}

In the remainder of this section we will use our main theorem to derive obstructions for elements of $\SS^s(M)$ to lie in $\SS^s_\NL(M)$ in various concrete examples, which we will ensure are hard by means of Propositions \ref{prop:soft} and \ref{prop:softmore}.


\subsection{Homotopy spheres}\label{sec: htpy spheres}


Suppose that $M$ is a homotopy sphere and $\pi:L \to M$ is a homotopy equivalence. As mentioned in Remark \ref{rem: is soft?} there are no soft obstructions for an element of $\SS^s(M)$ to lie in $\SS_{\text{NL}}^s(M)$ when $M$ is a homotopy sphere.

Since all homotopy spheres are PL equivalent, given any marking $S^n \to M$, we may identify $[M,PL/O]$ with $\Theta_n$ and identify the map $[M,PL/O] \to [M,G/O]$ with the Kervaire-Milnor map $\Theta_n \to \coker(J_n)$, where $J_n: \pi_nO \to \pi_n^s$ is the $J$-homomorphism. We recall that the Kervaire-Milnor map $\Theta_n \to \coker(J_n)$ can be thought of as the map which sends a homotopy sphere (which is always stably parallelizable) to the collection of framed bordism classes determined by all possible stable framings.


The kernel of the Kervaire-Milnor map is the subgroup $bP_{n+1} \subset \Theta_n$ consisting of those homotopy spheres that bound a parallelizable manifold, so the induced map on the quotient $\Theta_n/bP_{n+1} \to \coker(J_n)$ is a monomorphism (whose image is always either the whole $\coker(J_n)$ or an index two subgroup). The normal invariant of a homotopy equivalence of homotopy spheres $\Sigma_0 \to \Sigma_1$ can be thus identified with the difference between the classes of $\Sigma_0$ and $\Sigma_1$ in $\Theta_n / bP_{n+1} \subset \coker(J_n)$.

Let $\Sigma_0$ and $\Sigma_1$ be $n$-dimensional homotopy spheres. The above dicussion, together with Corollary \ref{cor:main torsion} implies:
\begin{corollary}
If there exists a Lagrangian embedding $\Sigma_1 \subset T^*\Sigma_0$, then  the class $\Sigma_0-\Sigma_1$ in $\Theta_n/bP_{n+1}$ is 2-torsion. \qed
\end{corollary}

Since $\Theta_n/bP_{n+1}$ is not in general 2-torsion,  we easily obtain many examples of hard obstructions in $\SS^s_\NL(S^n)$. For example $\Theta_{10}/bP_{11}\simeq \bZ/6$, $\Theta_{18}/bP_{19}\simeq \bZ/8 \oplus \bZ/2$ and $\Theta_{20}/bP_{21} \simeq \bZ/24$ contain lots of elements which are not 2-torsion.

Furthermore, for homotopy spheres our result from Theorems \ref{thm:main derivative} and \ref{thm:main duality} that the normal invariant $\Sigma_0 \to G/O$ of the homotopy equivalence $\Sigma_0 \to \Sigma_1$ factors through $B(G/O)$ gives additional restrictions on the possible classes in $\Theta_n/bP_{n+1}$ beyond the condition of being 2-torsion. For example, $\Theta_8/bP_9 \simeq \bZ/2$ is 2-torsion, so the knowledge that the normal is 2-torsion by itself does not yield new information. However, $\pi_7G/O=0$ and so if $\Sigma \subset T^*S^8$ is a homotopy 8-sphere it follows from our work that the normal invariant of the projection $\Sigma \to S^8$ is trivial. This implies that the class of $\Sigma$ in $\theta_8/bP_9$ is trivial. Since $bP_9=0$ this in fact implies the following consequence:

\begin{corollary}
Let $\Sigma \subset T^*S^8$ be a Lagrangian homotopy sphere. Then $\Sigma$ is diffeomorphic to $S^8$.
\end{corollary}

More generally, $bP_{n+1}=0$ whenever $n$ is even, so for even dimensional Lagrangian homotopy spheres of dimension $\neq 4$ our above results can be stated at the level of $\theta_n$. For example, we have the following statement.

\begin{corollary} $\Sigma \subset T^*S^n$ is a Lagrangian homotopy sphere and $n$ is even, then $\Sigma \# \Sigma$ is diffeomorphic to the standard $n$-sphere $S^n$.  \end{corollary}

Furthermore, as explained in Section \ref{sec:intro}, combining the present article with the results of \cite{BW88} one immediately obtains the stronger conclusion of Corollary \ref{cor:main eta2}, namely that the class of $\Sigma_0-\Sigma_1$ in $\Theta_n/bP_{n+1}$ is in the image of the composition
$$ \pi_{n-1}^s \xrightarrow{\eta} \pi_n^s \to \text{Coker}(J_n),$$ where the first map is multiplication by $\eta \in \pi_1^s$. From this a number of additional constraints can be deduced, see for example the first picture in \cite{H} for an illustration of the relevant multiplicative structure of the stable homotopy groups of spheres in small degrees.

\subsection{A general criterion}

For many applications it is sufficient to use the consequence of our results that the normal invariant $M \to G/O$ of a nearby Lagrangian $L \to T^*M$ is 2-torsion (this is Corollary~\ref{cor:main torsion}). Concretely, one may use a simple criterion, formulated in Proposition \ref{lem: criterion} below, to obtain examples of hard obstructions in $\SS^s_\NL(M)$ for a general class of smooth manifolds $M$.

Recall from \cite{HM74} (introduction to part II) that there is a bijection between the set of smoothings of a PL manifold $M$ (that has at least one smoothing) up to concordance and $[M, PL/O]$, where the bijection can be arranged to carry any fixed smooth structure on $M$ to the trivial homotopy class. For example, in the case of homotopy spheres every exotic sphere is PL equivalent to the standard sphere and hence $[S^n, PL/O]$ can be identified with the group $\Theta_n$ as was already mentioned.

\begin{proposition}\label{lem: criterion}
Suppose that $M$ is a smooth closed manifold such that $[M, BO]$ has no odd torsion. Then any odd torsion element of $[M,G/O]$ can be lifted to an element $[\pi:L \to M] \in \SS^s(M)$ such that $\pi^*TM \otimes \mathbf{C} \simeq TL \otimes \mathbf{C}$ as complex vector bundles, yet $[\pi:L \to M] \not\in \SS^s_\NL(M)$. 
\end{proposition}

\begin{proof}
  It was shown by Sullivan that away from the prime 2 the space $G/PL$ has the same homotopy type as $BO$. Furthermore, there is a map $G/PL[\frac{1}{2}] \to BO^{\otimes}[\frac{1}{2}]$ which is an equivalence of h-spaces, where $BO^{\otimes} = BO \times (1) \subset BO \times \bZ$  denotes $BO$ with the h-space structure given by tensor product of virtual vector bundles of rank 1. Our assumption was that $[M,BO]$ has no odd torsion, where the h-space structure on $BO$ is that of Whitney sum, but from the uniqueness of h-space structures on $BO$ away from the prime 2, see \cite{AP}, it follows that $[M,BO^{\otimes}[\frac{1}{2}]]$ also has no odd torsion. We conclude that $[M,G/PL]$ has no odd torsion. 
  
From the fibration $PL/O \to G/O \to G/PL$ it then follows that the associated homomorphims of abelian groups $[M,PL/O] \to [M,G/O]$ surjects onto the odd torsion subgroup of $[M,G/O]$. In particular it follows that any odd torsion class in $[M,G/O]$ automatically lies in the image of $[M, PL/O] \to [M, G/O]$.
  
  The surgery obstruction $[M, G/O] \to L^s_n(\pi_1M)$ of a class which lifts to $[M,PL/O]$ must be trivial, precisely because it comes from a smoothing of $M$. Hence it can be lifted to a class $[\pi:L \to M] \in \SS^s(M)$. To prove that $\pi^*TM \otimes \mathbf{C} \simeq TL \otimes \mathbf{C}$ as complex vector bundles it suffices to prove the stronger property that $\pi^*TM \simeq TL$ are isomorphic as real vector bundles. But we know that the map $M \to BO$ classifying $\pi^*TM - TL$ must be trivial, since it is the image of an odd torsion element under the map $[M, G/O] \to [M, BO]$ and $[M, BO]$ has no odd torsion by assumption. 
  
  Finally, the class we started with in $[M, G/O]$ is not 2-torsion, since it is an odd torsion element. Hence by Corollary \ref{cor:main torsion} we conclude that $[\pi:L \to M]$ does not lie in $\SS^s_\NL(M)$ as required.
\end{proof}

\begin{remark}\label{remark: hard}
  In view of Proposition \ref{prop:softmore}, when $L$ and $M$ are orientable of even dimension or when $L$ and $M$ are odd dimensional and parallelizable, a class $[\pi:L \to M] \in \SS^s(M)$ as in Proposition \ref{lem: criterion} can be realized by a totally real embedding, hence at least in this case the obstruction of Proposition \ref{lem: criterion} is hard.
\end{remark}

\subsection{Homotopy products of spheres}
Let us discuss the case where $M$ is homotopy equivalent to a product of two spheres $S^a \times S^b$. Recall that if $X=\Omega Y$, then $[S^a \times S^b, X] \simeq \pi_a X \oplus \pi_b X \oplus \pi_{a+b} X$. Indeed, the attaching maps for the top cell in the natural CW structure of $S^a \times S^b$ becomes trivial after suspension. Hence
\[ [S^a \times  S^b , X]  = [S^a \times  S^b , \Omega Y]  \simeq [ \Sigma \left(S^a \times S^b\right) , Y] \]
\[ \simeq [S^{a+1} \lor S^{b+1} \lor S^{a+b+1}, Y]  \simeq \pi_{a+1} Y \oplus \pi_{b+1}Y \oplus \pi_{a+b+1}Y \]
\[ \simeq \pi_a X \oplus \pi_b X \oplus \pi_{a+b} X . \]

Therefore, for $i=a, b$ and $a+b$, we deduce that the group $\Theta_i / bP_{i+1}$ is a summand of the image of $[S^a \times S^b, PL/O] \to [S^a \times S^b, G/O]$. Indeed, since $PL/O$ and $G/O$ are loop spaces,  we have
\[ [S^a \times S^b , PL/O] \simeq  \pi_a PL/O \oplus \pi_b PL/O \oplus \pi_{a+b} PL/O \] 
\[ [S^a \times S^b , G/O] \simeq  \pi_a G/O \oplus \pi_b G/O \oplus \pi_{a+b} G/O \]  
The map $[S^a \times S^b , PL/O] \to [S^a \times S^b , G/O]$ respects this decomposition and is just the Kervaire-Milnor map $\Theta_i \to \text{coker}(J_i)$ on each factor, from which the claim follows.

Since $BO$ is a (infinite) loop space and $[S^i , BO] = \pi_i BO$ has no odd torsion, it follows by the same reasoning that $[S^a \times S^b, BO]$ has no odd torsion. As we saw before, for $n=10,18,20,...$ the group $\Theta_n / bP_{n+1}$ is not 2-torsion. Hence by Proposition \ref{lem: criterion} we obtain hard obstructions on $\SS^s_\NL(S^a \times S^b)$ when $a$, $b$ or $a+b$ are equal to $10,18,20,...$.

Suppose  next that $M$ is homotopy equivalent to $T^n$, the product of $n$ copies of $S^1$. As before, if $X=\Omega Y$ be a loop space, then $[T^n, X] \simeq \bigoplus_{i=0}^n \pi_iX^{\oplus {n \choose i}}$. Hence once again $[T^n, BO]$ has no odd torsion.

Just as before, for $i \leq n$, the group $\Theta_i / bP_{i+1}$ is a summand of the image of $[T^n, PL/O] \to [T^n, G/O]$. Since $PL/O$ and $G/O$ are loop spaces, reasoning as before we conclude
\[ [T^n , PL/O] \simeq  \bigoplus_i (\pi_iPL/O)^{\oplus {n \choose i}} , \qquad [T^n , G/O] \simeq  \bigoplus_i (\pi_i G/O)^{\oplus {n \choose i}} \]  
Moreover, the map $[T^n , PL/O] \to [T^n , G/O]$ respects this decomposition and is just the Kervaire-Milnor map $\Theta_i \to \text{coker}(J_i)$ on each factor.

Hence by Proposition \ref{lem: criterion}  for all $n \geq 10$ there are hard obstructions on $\SS^s_\NL(T^n)$. Clearly similar considerations apply to arbitrary products of spheres.

One could push this a bit further using the fact that the twisted duality map $B(G/O) \to G/O$ may be realized as a loop map (indeed it may be realized as an infinite loop map). We only construct the twisted duality map as a map of spaces in the present article. However, assuming this we may obtain hard obstructions beyond those provided by Proposition \ref{lem: criterion}, for example if $M$ is any product $S^8 \times S^b$ or any torus $T^n$ for $n \geq 8$, then the image of $[M,PL/O] \to [M,G/O]$ contains a $\Theta_8/bP_9 \simeq \Theta_8 \simeq \bZ/2$ summand which is not in the image of the map $[M,B(G/O)] \to [M,G/O]$ because the twisted duality map $B(G/O) \to G/O$ respects the above decomposition (by virtue of being a loop map) and $\pi_7G/O=0$. This class lifts to an element of $\SS^s(M)$ which by Theorems \ref{thm:main derivative} and \ref{thm:main duality} is not in $\SS_{\text{NL}}^s(M)$.

\subsection{Fake complex projective spaces}

As a final application we briefly discuss the case in which $M$ is homotopy equivalent to complex projective space.

First, we recall that $[\bC \mathbf{P}^n, BO]$ has no odd torsion, see for instance \cite{F67}. Therefore, in view of Remark \ref{remark: hard}, in order to apply Proposition \ref{lem: criterion} and exhibit hard obstructions on $S^s_{\text{NL} }(\bC \mathbf{P}^n)$  it suffices to exhibit odd torsion elements in $[\bC \mathbf{P}^n, G/O]$. 

Concrete examples of such odd torsion may be obtained by considering the Atiyah-Hirzebruch spectral sequence for the stable cohomotopy of $\bC \mathbf{P}^n$, using knowledge of the stable homotopy groups of spheres together with relevant information about attaching maps and homotopy group compositions. Indeed, as explained by Brumfiel  in  \cite{B68},  the torsion subgroup of $[\bC \mathbf{P}^n , G/O]$ may be identified with the zeroth stable cohomotopy group of $\bC \mathbf{P}^n$, from which it follows that  $[\bC \mathbf{P}^n, G/O]$ contains a $\bZ/3$ summand for $n=5$ and $n=6$.  See \cite{B68} for further results in this direction indicating that odd torsion in $[\bC \mathbf{P}^n, G/O]$ is plentiful.


\appendix
\section{Two-sided bar construction}

Let $G$ be a (well-pointed) group like topological monoid, $X$ a right $G$-space and $Y$ a left $G$-space.
We denote $B_\bullet(X,G,Y)$ the associated two-sided bar construction. This is a simplicial space and we denote $B(X,G,Y)$
its geometric realization. We will use the following well-known results (see \cite{May}).

\begin{proposition}\label{lem:bar0}
The natural map \[B(X,G,G)\to X\] is a homotopy equivalence.
\end{proposition}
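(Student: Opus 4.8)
The plan is to identify the natural map with the geometric realization of the augmentation
\[
  \varepsilon_\bullet\colon B_\bullet(X,G,G)\longrightarrow X
\]
onto the \emph{constant} simplicial space on $X$, and then to show that $\varepsilon_\bullet$ is a simplicial homotopy equivalence by exhibiting an \emph{extra degeneracy}. First I would recall the structure: writing $B_p(X,G,G)=X\times G^{p}\times G$, the augmentation is the map induced in simplicial degree $0$ by the right action $X\times G\to X$, $(x,y)\mapsto xy$; associativity of this action shows that it is compatible with the two face maps $B_1(X,G,G)\to B_0(X,G,G)$, so $\varepsilon_\bullet$ is a map of simplicial spaces, and its realization $\varepsilon\colon B(X,G,G)\to X$ is the natural map in the statement.

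Next I would write down the extra degeneracy
\[
  s_{-1}\colon B_p(X,G,G)\longrightarrow B_{p+1}(X,G,G),\qquad (x;g_1,\dots,g_p;y)\longmapsto(x;g_1,\dots,g_p,y;e),
\]
which inserts the unit $e\in G$ in the last (acted-on) coordinate, and verify the extra–degeneracy identities against the faces and degeneracies of the two–sided bar construction: one has $d_{p+1}s_{-1}=\id$, together with $d_is_{-1}=s_{-1}d_i$ and $s_is_{-1}=s_{-1}s_i$ for $0\le i\le p$, and in degree $0$ the identity $d_0s_{-1}=\iota\circ\varepsilon$, where $\iota\colon X\to B_0(X,G,G)$, $x\mapsto(x;e)$, is the unit section. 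This is a routine direct check; the only care needed is to match the identities to whichever index conventions one has fixed for the faces of $B_\bullet(X,G,G)$.

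Then I would invoke the standard consequence (see \cite{May}) that an augmented simplicial space admitting an extra degeneracy is simplicially contractible, so that $\varepsilon_\bullet$ is a simplicial homotopy equivalence, with homotopy inverse $\iota$ and with the requisite simplicial homotopy assembled from iterates of $s_{-1}$. Since geometric realization commutes with taking products with $\Delta^1_\bullet$, it sends simplicial homotopies to honest homotopies, so the realized map $\varepsilon\colon B(X,G,G)\to X$ is a homotopy equivalence (indeed $X$ is a deformation retract).

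I do not expect any genuine obstacle: this is a classical fact and the argument above is entirely formal. It is worth noting in passing that neither the group-like nor the well-pointedness hypotheses on $G$ are actually used for this particular proposition — they enter only in the subsequent statements of the appendix (where one deduces fibration sequences and compares colimits), but it is convenient to impose them once and for all at the start of the section.
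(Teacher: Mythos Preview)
Your argument is correct and is precisely the classical extra-degeneracy proof; the paper does not actually give its own proof here but simply cites \cite{May} for this well-known fact, and your write-up is a faithful unpacking of that reference. The only cosmetic point is that your map $s_{-1}$ satisfies the identities of a \emph{top} extra degeneracy ($d_{p+1}s_{-1}=\id$) rather than a bottom one, so the name $s_{-1}$ is slightly nonstandard---but you already flag the indexing issue yourself, and it has no effect on the argument.
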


\begin{proposition}\label{lem:bar1}
The map 
\[B(X,G,Y)\to B(X,G)\]
is a quasi-fibration with fiber weakly homotopy equivalent to $Y$.
\end{proposition}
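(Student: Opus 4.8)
The plan is to run the classical Dold--Thom argument on the skeletal (bar) filtration of $B(X,G)$, as in \cite{May}. Write $p\colon B(X,G,Y)\to B(X,G)=B(X,G,\ast)$ for the realization of the simplicial map collapsing the $Y$-coordinate; on $p$-simplices it is the projection $X\times G^{p}\times Y\to X\times G^{p}$, so the honest point-set preimage of every point of $B(X,G)$ is a copy of $Y$. Consequently the entire content to be established is that $p$ is a quasifibration: granting that, the preimage of any point is weakly equivalent to the homotopy fibre over that point, and this preimage is $Y$.

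First I would note that the filtration is compatible with $p$. A simplex $(x;g_{1},\dots,g_{k};y)$ of $B_\bullet(X,G,Y)$ is nondegenerate exactly when no $g_{i}$ is the unit of $G$, and the same criterion characterizes nondegeneracy of its image $(x;g_{1},\dots,g_{k})$; hence $p$ carries nondegenerate simplices to nondegenerate simplices of the same dimension, so $p^{-1}\big(F_{n}B(X,G)\big)=F_{n}B(X,G,Y)$ for all $n$. Over the open $n$-th layer $F_{n}B(X,G)\setminus F_{n-1}B(X,G)\cong X\times (G\setminus\{e\})^{n}\times\mathring\Delta^{n}$, the map $p$ is literally the trivial bundle with fibre $Y$; in particular over $F_{0}B(X,G)=X$ it is the projection $X\times Y\to X$, which gives the base case of an induction on $n$.

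For the inductive step I would invoke the Dold--Thom gluing package. Write $F_{n}=U_{1}\cup U_{2}$ with $U_{2}=F_{n}\setminus F_{n-1}$ the open $n$-layer and $U_{1}$ an open collar neighbourhood of $F_{n-1}$ deformation retracting onto $F_{n-1}$, so that $U_{1}\cap U_{2}$ is a deleted collar deformation retracting inside $U_{1}$ onto the attaching locus and inside $U_{2}$ onto the layer. Over $U_{2}$, $p$ is a genuine bundle; over $U_{1}$ one lifts the retraction onto $F_{n-1}$ to a fibrewise deformation of $p^{-1}(U_{1})$ onto $p^{-1}(F_{n-1})$ and applies the Dold--Thom neighbourhood lemma, so that $p$ over $U_{1}$ is a quasifibration by the inductive hypothesis; over $U_{1}\cap U_{2}$ it is a quasifibration again by the inductive hypothesis; and the Dold--Thom union theorem assembles these into the statement that $p$ over $F_{n}$ is a quasifibration. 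Passing to the colimit over $n$ (geometric realizations carry the colimit topology and the hypotheses are stable under the colimit) then shows that $p\colon B(X,G,Y)\to B(X,G)$ is a quasifibration, and with it the fibre statement.

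I expect the only genuine obstacle to be the fibrewise deformation over the collar $U_{1}$ and the verification that it is a \emph{weak equivalence on each fibre}, which is the hypothesis of the Dold--Thom neighbourhood lemma. Here the comparison maps of fibres are exactly those produced by the outer face operators $d_{0}$ and $d_{k}$, which multiply a coordinate of $G$ into $X$ (on the base) and into $Y$ (on the total space); these are homotopy equivalences precisely because $G$ is grouplike, i.e. $\pi_{0}(G)$ is a group, which is exactly why this hypothesis appears. One must also keep track of the NDR structures (well-pointedness of $G$, properness of the bar construction) so that the pairs $(F_{n},F_{n-1})$ and their preimages are cofibration pairs and the Dold--Thom lemmas apply. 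All of this is carried out in \cite{May}.
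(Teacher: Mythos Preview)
Your proposal is correct and is precisely the standard Dold--Thom/skeletal-filtration argument from \cite{May} that the paper cites; the paper itself does not give a proof but simply records this as a well-known result. Your identification of where the grouplike hypothesis enters (the fibre map induced by $d_k$ is $y\mapsto g_k\cdot y$) is exactly right, and the observation that nondegeneracy in $B_\bullet(X,G,Y)$ is detected purely by the $G$-coordinates, so that $p$ respects the skeletal filtration with literal fibre $Y$, is the correct starting point.
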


\begin{corollary}\label{lem:bar2}
Let $E$ be a right $G$-space and let $B$ be a space with trivial $G$ action. Assume that $f:E \to B$ is a $G$-invariant surjective map which is a quasi-fibration. Suppose that for each $e\in E$, the map $G\to f^{-1}(f(e))$ given by $g\mapsto e\cdot g$ is a weak homotopy equivalence. Then the induced map $B(E,G)\to B$ is a weak homotopy equivalence.
\end{corollary}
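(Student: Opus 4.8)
The statement to prove is Corollary~\ref{lem:bar2}: given a group-like topological monoid $G$, a right $G$-space $E$, a space $B$ with trivial $G$-action, and a $G$-invariant surjective quasi-fibration $f\colon E\to B$ such that each orbit map $G\to f^{-1}(f(e))$, $g\mapsto e\cdot g$, is a weak equivalence, the induced map $B(E,G)\to B$ is a weak equivalence.

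The plan is to build a commuting square relating $B(E,G)$ to $E$ and $B$, and to compare fibers. First I would recall that the projection $B(E,G,G)\to B(E,G)$ is a quasi-fibration with fiber $G$ by Proposition~\ref{lem:bar1} (applied with $Y=G$), and that the augmentation $B(E,G,G)\to E$ is a homotopy equivalence by Proposition~\ref{lem:bar0}. Composing with $f$ gives a map $B(E,G,G)\to B$; since $f$ is $G$-invariant and the simplicial structure maps in $B_\bullet(E,G,G)$ involve only the $G$-action and the map $E\to E$, this composite factors through $B(E,G)$, yielding a strictly commuting triangle
\[
\begin{tikzcd}
B(E,G,G) \arrow[r] \arrow[d] & E \arrow[d,"f"] \\
B(E,G) \arrow[r,"\bar f"] & B.
\end{tikzcd}
\]
The top horizontal map is a homotopy equivalence and the right vertical $f$ is a quasi-fibration, so the composite $B(E,G,G)\to B$ is a quasi-fibration; its fiber over a point $b=f(e)$ is (weakly equivalent to) $f^{-1}(b)$. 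The left vertical map is a quasi-fibration with fiber $G$.

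The key step is then a fiberwise comparison: over a point $b\in B$, choose $e\in E$ with $f(e)=b$ (using surjectivity), and observe that the left vertical quasi-fibration $B(E,G,G)\to B(E,G)$ restricted over the point $[e]\in B(E,G)$ has fiber $G$ acting, via the orbit map $g\mapsto e\cdot g$, into the fiber $f^{-1}(b)$ of the composite quasi-fibration; this is precisely the map hypothesized to be a weak equivalence. So I would invoke the comparison of quasi-fibrations (the standard fact that a map of quasi-fibrations over a path-connected base which is a weak equivalence on one fiber, together with a weak equivalence on total spaces, forces a weak equivalence on base — or more directly, play off the long exact sequences of homotopy groups). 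Concretely, from the two quasi-fibration sequences and the homotopy equivalence $B(E,G,G)\simeq E$, a diagram chase on the long exact sequences of homotopy groups, with the five lemma, shows $\pi_*(B(E,G))\to\pi_*(B)$ is an isomorphism. One point of care: $B$ and $B(E,G)$ need not be connected, so I would argue one path component at a time, noting $f$ surjective and $G$ group-like ensures the relevant $\pi_0$ statements match up; here $G$ group-like is what guarantees the orbit maps detect the right components.

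The main obstacle I expect is the bookkeeping with quasi-fibrations rather than honest fibrations: quasi-fibrations are not preserved under arbitrary pullback, so I cannot blithely restrict over subspaces of $B$, and I must phrase the fiber comparison directly in terms of the maps of total spaces and the induced maps on homotopy groups over chosen basepoints, rather than pulling back. This is handled by the standard machinery in \cite{May} (Dold--Thom-style arguments), so the proof amounts to assembling these pieces correctly; I would keep the argument at the level of the long exact sequences and the five lemma, applied componentwise, which is routine once the commuting triangle above is in place.
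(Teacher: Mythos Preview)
Your proposal is correct and follows essentially the same route as the paper: the same commuting square with $B(E,G,G)\to E$ on top and $B(E,G)\to B$ on the bottom, the identification of fibers via the orbit maps, and the five-lemma comparison of long exact sequences. The paper's write-up is terser and simply says ``by $G$-equivariance the horizontal maps induce a weak homotopy equivalence on the fibers,'' whereas you spell out the basepoint and component bookkeeping; but the argument is the same.
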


\begin{proof}
  Consider the following commutative diagram
  \begin{center}
    \begin{tikzcd}
      {B(E,G,G)} \arrow[r,"\sim"]\arrow[d]& E  \arrow[d]\\
      {B(E,G)}\arrow[r] & B
    \end{tikzcd}
  \end{center}
  The top horizontal map is a homotopy equivalence according to Lemma~\ref{lem:bar0}.
  The left vertical map is a quasi-fibration with fiber weakly homotopy equivalent to $G$ according to Lemma~\ref{lem:bar1}
  and so is the right vertical map by assumption. Moreover by $G$-equivariance the horizontal maps
  induce a weak homotopy equivalence on the fibers of the vertical maps. Hence the map $B(E,G)\to B$
  is also a weak homotopy equivalence.
\end{proof}

Next, we let $A$ be a topological monoid (not necessarily grouplike) and $X$ a space with a right action of $A$.
\begin{proposition}[c.f. Proposition D.1 of \cite{H11}] \label{lem:bar3}
If $A$ acts on $X$ by weak homotopy equivalences, namely for all $m\in A$, $x\mapsto x\cdot m$ is a weak homotopy equivalence of $X$,
then $B(X,A)\to BA$ is a quasifibration with fiber $X$. \qed
\end{proposition}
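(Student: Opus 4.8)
The plan is to deduce the statement from the standard criteria for recognizing quasifibrations \`a la Dold--Thom (the open gluing lemma, the compatibility with filtered colimits along cofibrations, and the deformation lemma), as recalled in \cite{May}, applied to the skeletal filtration of the bar construction. Write $p\colon B(X,A)=|B_\bullet(X,A)|\to |B_\bullet(\ast,A)|=BA$ for the map induced by collapsing the right module $X$ to a point. Since $A$ is well-pointed, both simplicial spaces are proper, so their geometric realizations carry skeletal filtrations, and because $B_\bullet(X,A)\to B_\bullet(\ast,A)$ is degreewise the projection $X\times A^k\to A^k$ (and a simplex $(x,a_1,\dots,a_k)$ is degenerate exactly when the corresponding $(a_1,\dots,a_k)$ is), we have $\mathrm{sk}_n B(X,A)=p^{-1}(\mathrm{sk}_n BA)=:E_n$ over $F_n:=\mathrm{sk}_n BA$. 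I would prove by induction on $n$ that $p$ restricts to a quasifibration with fibre $X$ over $F_n$. The base case $n=0$ is the projection $X\to\ast$.

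For the inductive step, choose a standard open collar of $\partial\Delta^n$ in $\Delta^n$ and let $U\subseteq F_n$ be the resulting open neighbourhood of $F_{n-1}$, obtained by thickening $F_{n-1}$ by the collar times the nondegenerate part of $A^n$. Pushing the collar coordinate radially to $\partial\Delta^n$ gives a deformation retraction $h_t\colon U\to U$ onto $F_{n-1}$; performed on $\Delta^n\times A^n\times X$ (with $X$ along for the ride), it lifts to a deformation $H_t\colon p^{-1}(U)\to p^{-1}(U)$ onto $E_{n-1}$ covering $h_t$. The decisive point is that the time-$1$ map of $H_t$ is, on each fibre of $p$, a weak homotopy equivalence: over the interior of the $i$-th face of a cell the induced self-map of the fibre $X$ is the identity for $i>0$ (the face maps $d_i$ of $B_\bullet(X,A)$ with $i>0$ only multiply or delete $A$-coordinates and leave the $X$-coordinate untouched), while over the interior of the $0$-th face it is $x\mapsto x\cdot a_1$, which is a weak homotopy equivalence \emph{precisely} by the hypothesis that $A$ acts on $X$ by weak equivalences; over a codimension-$\ge 2$ face one gets a finite composite of such maps, hence again a weak equivalence. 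Granting this, the deformation lemma upgrades the inductive hypothesis to: $p$ is a quasifibration over $U$.

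It then remains to glue. The complement $V:=F_n\setminus F_{n-1}$ is the union of the open top cells $\mathring\Delta^n\times(\text{nondeg.\ part of }A^n)$, over which $p$ is the trivial projection with fibre $X$, and the same holds over $U\cap V$, so $p$ is a quasifibration over each of $V$ and $U\cap V$; since $F_n=U\cup V$ with $U,V$ open, the open gluing lemma gives a quasifibration over $F_n$, completing the induction. Finally $BA=\bigcup_n F_n$ and $B(X,A)=\bigcup_n E_n$ are colimits along closed cofibrations over which $p$ is a quasifibration, so the colimit criterion gives that $p\colon B(X,A)\to BA$ is a quasifibration; its point-set fibre over the basepoint is literally $X$, which is the assertion.

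The main obstacle is the bookkeeping in the inductive step: one must choose the collars and their radial retractions compatibly across all simplicial degrees and all cells so that they assemble to a well-defined deformation of $p^{-1}(U)$ onto $E_{n-1}$ (respecting the degeneracy identifications, which insert the unit $e$ acting as the identity), and then verify that near the codimension-$\ge 2$ strata of $\partial\Delta^n$ the induced map on a fibre really is the expected composite of identities and right-multiplications $x\mapsto x\cdot a$. This is exactly — and the only place — where ``$A$ acts by weak equivalences'' is used, and it is the reason the hypothesis cannot be dropped, in contrast with the grouplike case of Proposition~\ref{lem:bar1} where $x\mapsto x\cdot a$ is automatically a homotopy equivalence.
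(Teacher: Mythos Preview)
Your proposal is correct and is precisely the standard Dold--Thom argument that the paper defers to by citing Hatcher's exposition \cite{H11} (the paper gives no proof of its own, just the reference and a \qed). Your identification of where the hypothesis enters---namely that the $d_0$ face map induces $x\mapsto x\cdot a_1$ on fibres while the other face maps act as the identity on the $X$ coordinate---is exactly the crux of Hatcher's Proposition~D.1, so there is nothing further to compare.
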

Up to homotopy, the above implies that the $A$-action on $X$ extends to an action of its group completion $\Omega B A$, which can be used to apply results about actions of group-like monoids to this setting. We can avoid the up-to-homotopy replacement in proving the following:
\begin{corollary}\label{lem:bar4}
Let $\phi:A\to A'$ be a monoid map such that $BA\to BA'$ is a homotopy equivalence.
Let $X$ and $X'$ be spaces with right actions by weak homotopy equivalences of $A$ and $A'$ respectively.
Let $f:X\to X'$ be a weak homotopy equivalence which is $\phi$-equivariant. Then the induced map $B(X,A)\to B(X',A')$
is a weak homotopy equivalence.
\end{corollary}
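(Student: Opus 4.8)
The plan is to exhibit $B(X,A)\to B(X',A')$ as a map of quasifibrations lying over $BA\to BA'$ and then conclude with a five-lemma argument on homotopy groups.

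First I would record that the map in question comes from the simplicial map $B_\bullet(X,A)\to B_\bullet(X',A')$ sending a $p$-simplex $(x,a_1,\dots,a_p)$ to $(f(x),\phi(a_1),\dots,\phi(a_p))$; this respects the face and degeneracy maps precisely because $\phi$ is a monoid map and $f$ is $\phi$-equivariant, and it is manifestly compatible with the projections onto $B_\bullet A$ and $B_\bullet A'$ obtained by collapsing the right-module coordinate. Taking geometric realizations yields a strictly commuting square
\begin{center}
\begin{tikzcd}
B(X,A)\arrow[r]\arrow[d] & B(X',A')\arrow[d]\\
BA\arrow[r] & BA'.
\end{tikzcd}
\end{center}

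Since $A$ acts on $X$ and $A'$ acts on $X'$ by weak homotopy equivalences, Proposition~\ref{lem:bar3} tells us that both vertical maps are quasifibrations, with fibers $X$ and $X'$ respectively over the canonical basepoints of the (connected) spaces $BA$ and $BA'$, and under these identifications the map induced on fibers by the square is exactly $f$. I would then compare the long exact sequences in homotopy groups of the two quasifibrations: the bottom map $BA\to BA'$ is an isomorphism on all homotopy groups by hypothesis, and $f\colon X\to X'$ is an isomorphism on all homotopy groups by hypothesis, so the five lemma — together with the connectedness of all four spaces, which handles the low-degree bookkeeping — forces $B(X,A)\to B(X',A')$ to be an isomorphism on all homotopy groups, i.e.\ a weak homotopy equivalence.

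The one point requiring care is the quasifibration bookkeeping: a map of quasifibrations induces a map of the associated long exact sequences only once one checks that the fibers being compared sit over matching basepoints and that the identification of the fiber of $B(X,A)\to BA$ with $X$ provided by Proposition~\ref{lem:bar3} is the tautological one, so that the induced map on fibers really is $f$ (and likewise for the primed version, compatibly). Because $BA$ is path-connected this is automatic, and I do not anticipate any further obstacle.
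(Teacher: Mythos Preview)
Your argument is essentially the paper's own proof: set up the map of quasifibration sequences $X\to B(X,A)\to BA$ and $X'\to B(X',A')\to BA'$ using Proposition~\ref{lem:bar3}, then apply the five lemma. One small imprecision: you invoke ``connectedness of all four spaces'' for the low-degree bookkeeping, but while $BA$ and $BA'$ are connected, $B(X,A)$ and $B(X',A')$ need not be; the $\pi_0$ step still goes through from the tail of the long exact sequence and the hypothesis that $f$ is a weak equivalence, so this does not affect the validity of the argument.
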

\begin{proof}
Consider the following diagram where the vertical sequences are fibration sequences according to Corollary~\ref{lem:bar3}
\begin{center}
\begin{tikzcd}
X \arrow[r]\arrow[d] & X'\arrow[d]\\
B(X,A)\arrow[r] \arrow[d]& B(X',A')\arrow[d]\\
BA \arrow[r] & BA'
\end{tikzcd}
\end{center}
The result then follows from the five lemma.
\end{proof}

\bibliographystyle{hplain}

\end{document}